\documentclass[11pt,a4paper]{article}
\usepackage[T1]{fontenc}
\usepackage{lmodern}
\usepackage{amsmath,amssymb,amsthm,mathtools}
\mathtoolsset{showonlyrefs} 
\usepackage[margin=28mm,headheight=15pt]{geometry}
\usepackage{microtype,enumitem,booktabs,longtable,array,xcolor}
\usepackage[colorlinks=true,linkcolor=blue!45!black,citecolor=blue!45!black,urlcolor=blue!45!black]{hyperref}
\hypersetup{pdftitle={A priori Dirichlet energy bounds for positive Trudinger--Moser critical points on simply connected planar domains}}
\setlist[enumerate]{itemsep=0.35em,topsep=0.4em}
\setlist[itemize]{itemsep=0.3em,topsep=0.4em}
\numberwithin{equation}{section}
\newtheorem{theorem}{Theorem}[section]
\newtheorem{proposition}[theorem]{Proposition}
\newtheorem{lemma}[theorem]{Lemma}

\newtheorem{tool}[theorem]{Theorem}
\theoremstyle{definition}

\theoremstyle{remark}
\newtheorem{remark}[theorem]{Remark}
\DeclareMathOperator{\diver}{div}
\DeclareMathOperator{\supp}{supp}
\DeclareMathOperator{\dist}{dist}

\DeclareMathOperator{\Lip}{Lip}
\DeclareMathOperator{\TV}{TV}
\newcommand{\R}{\mathbb R}
\newcommand{\D}{\mathbb D}

\newcommand{\dd}{\,\mathrm d}
\newcommand{\ind}{\mathbf 1}
\newcommand{\weak}{\rightharpoonup}
\newcommand{\eps}{\varepsilon}
\newcommand{\Om}{\Omega}
\newcommand{\cl}{\overline}

\newcommand{\rev}[1]{{#1}}
\newenvironment{revision}{\par\begingroup}{\par\endgroup}
\allowdisplaybreaks[2]
\title{\bfseries A priori Dirichlet energy bounds for positive Trudinger--Moser critical points on simply connected planar domains}
\author{%
Hongtao Hu\thanks{\href{mailto:huht23@mails.tsinghua.edu.cn}{\texttt{huht23@mails.tsinghua.edu.cn}}}
\qquad Qiaoran Wu\thanks{\href{mailto:wuqr24@mails.tsinghua.edu.cn}{\texttt{wuqr24@mails.tsinghua.edu.cn}}}
\qquad Wenming Zou\thanks{\href{mailto:zou-wm@math.tsinghua.edu.cn}{\texttt{zou-wm@math.tsinghua.edu.cn}}}\\[0.5em]
{\small\rev{Department of Mathematical Sciences, Tsinghua University}}\\
{\small\rev{Beijing 100084, China}}}
\date{}

\begin{document}
\maketitle
\begin{abstract}
\begin{revision}
\noindent Let \(\Om\subset\R^2\) be a smooth bounded simply connected domain. We prove that all positive solutions of \(-\Delta u=\theta ue^{u^2}\), with \(\theta>0\) and zero Dirichlet boundary data, have Dirichlet energy bounded by a constant depending only on \(\Om\). Consequently, the Trudinger--Moser functional has no nonnegative constrained critical points at sufficiently large prescribed energies. The proof combines a weighted Pohozaev identity obtained after conformal change of variables, concentration analysis of normalized measures, and local radial comparison. The comparison hypotheses are obtained from local integral estimates, without an a priori energy bound. An estimate for the exterior Green potential then excludes sequences of unbounded energy.
\end{revision}
\end{abstract}

\noindent\textbf{Keywords:} Trudinger--Moser functional; positive solutions; a priori energy estimates.

\noindent\textbf{Mathematics Subject Classification (2020):} 35J61; 35B45, 35B33, 35B38.

\section{Introduction}

Let \(\Om\subset\R^2\) be a smooth bounded domain. In dimension two, the Sobolev space \(H^1_0(\Om)\) embeds into every \(L^q(\Om)\), \(1\le q<\infty\), but not into \(L^\infty(\Om)\). \rev{The Trudinger--Moser inequality~\cite{Moser,Trudinger} describes the corresponding critical exponential integrability.} Set
\[
 F(u):=\int_\Om(e^{u^2}-1)\dd x,\qquad
 \mathcal M_\beta:=\left\{u\in H^1_0(\Om):\int_\Om|\nabla u|^2\dd x=\beta\right\},
 \quad \beta>0.
\]
Moser~\cite{Moser} identified the sharp constant \(4\pi\):
\[
 \sup_{u\in\mathcal M_\beta}F(u)<\infty
 \quad\Longleftrightarrow\quad \beta\le4\pi.
\]
For \(0<\beta<4\pi\), compactness ensures that the supremum is attained. At the critical energy \(\beta=4\pi\), the embedding is no longer compact. Carleson--Chang~\cite{CC} first proved the existence of an extremal on the disk, and Flucher~\cite{Flucher} extended this result to general bounded planar domains. For \(\beta>4\pi\), the restriction \(F|_{\mathcal M_\beta}\) is unbounded above, although positive constrained critical points may still exist.

\begin{revision}
Under a compactness assumption on the domain, Struwe~\cite[Theorems~1.7 and~1.8]{Struwe1988} obtained local maximizers at energies slightly above \(4\pi\), together with a second critical point for almost every nearby energy. Using a constraint-preserving heat flow and its energy quantization, Lamm--Robert--Struwe~\cite[Theorem~6.5]{LRS} subsequently proved that, for every smooth bounded planar domain, there is an \(\eps_\Om>0\) such that at least two positive constrained critical points exist at each \(\beta\in(4\pi,4\pi+\eps_\Om)\). Thus \(4\pi\) is the threshold for boundedness of the functional, but is not an upper limit for the existence of positive critical points.

At higher energies, the topology of the domain plays an important role. Using radial symmetry and blow-up analysis, Malchiodi--Martinazzi~\cite{MM} proved a uniform Dirichlet energy bound for positive constrained critical points on the disk. For general planar domains, Malchiodi--Martinazzi--Thizy~\cite[Theorem~1.2 with \(p=2\)]{MMT} computed the total Leray--Schauder degree of the corresponding problem:
\[
 d_\beta(\Om)=\binom{N-\chi(\Om)}{N},
 \qquad \beta\in(4\pi N,4\pi(N+1)],\quad N\in\{0,1,\ldots\},
\]
where \(\chi(\Om)\) is the Euler characteristic. When the domain is not simply connected, this degree is nonzero for every \(\beta>0\), yielding a positive critical point at each positive energy. On simply connected domains, the degree vanishes for \(\beta>4\pi\), but a vanishing degree does not by itself imply nonexistence.

This led to the question in~\cite{MMT}: does every fixed smooth bounded simply connected planar domain admit a threshold \(\beta^\sharp>4\pi\) above which no positive constrained critical point exists? The question is also stated explicitly in Malchiodi's survey~\cite{Malchiodi2026}. It asks whether the energy bound on the disk extends to general simply connected domains without radial symmetry.
\end{revision}

We prove this uniform energy bound. A nonzero nonnegative constrained critical point satisfies
\begin{equation}\label{eq:pde}
 -\Delta u=\theta ue^{u^2}\quad\hbox{in}\ \Om,
 \qquad u>0\quad\hbox{in}\ \Om,
 \qquad u=0\quad\hbox{on}\ \partial\Om,
 \qquad \theta>0.
\end{equation}
Here \(\theta\) is a multiplier that varies with the solution, and \(\Delta=\partial_1^2+\partial_2^2\).

\begin{theorem}\label{thm:main}
Let \(\Om\subset\R^2\) be a smooth bounded simply connected domain. There exists a finite constant \(B_\Om\), depending only on \(\Om\), such that every solution \(u\in C^2(\cl\Om)\) of \eqref{eq:pde} satisfies
\begin{equation}\label{eq:main}
 E(u):=\int_\Om|\nabla u|^2\dd x\le B_\Om.
\end{equation}
Consequently, \(F|_{\mathcal M_\beta}\) has no nonnegative critical point when \(\beta>B_\Om\).
\end{theorem}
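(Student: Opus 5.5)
We argue by contradiction. Suppose there are solutions \(u_k\) of \eqref{eq:pde} with multipliers \(\theta_k>0\) and \(E(u_k)\to\infty\). The plan is to understand where the energy concentrates and then to show that, on a simply connected domain, no admissible concentration configuration is compatible with unbounded energy.

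\textbf{Step 1: normalisation and a weighted Pohozaev identity.} Let \(\varphi:\D\to\Om\) be a Riemann map, which is smooth up to the boundary since \(\partial\Om\) is smooth. Pulling back, \(v_k=u_k\circ\varphi\) solves
\[
 -\Delta v_k=\theta_k|\varphi'|^2v_ke^{v_k^2}\quad\hbox{in}\ \D,\qquad v_k=0\quad\hbox{on}\ \partial\D,
\]
with the same Dirichlet energy. We test this equation against the radial field \(x\cdot\nabla v_k\). Because of the weight \(|\varphi'|^2\), this gives a Pohozaev identity with an error term involving \(x\cdot\nabla\log|\varphi'|^2\), which is bounded. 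The identity controls the boundary flux \(\int_{\partial\D}(\partial_\nu v_k)^2\) by \(\theta_k\int|\varphi'|^2(e^{v_k^2}-1)\) plus a bounded multiple of the same quantity. From this we aim to derive two facts: that \(\theta_k\) is bounded, and that no energy escapes to the boundary. The boundary exclusion should follow the usual moving-plane or Pohozaev argument near \(\partial\D\), which uses the smoothness of \(\varphi\) up to \(\partial\D\).

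\textbf{Step 2: concentration analysis.} Consider the normalised measures
\[
 \mu_k=\frac{\theta_kv_k^2e^{v_k^2}|\varphi'|^2\dd x}{E(u_k)}.
\]
After passing to a subsequence they converge to a probability measure \(\mu\) on \(\cl\D\). Step 1 excludes mass on \(\partial\D\). Classical blow-up analysis in the style of Druet and Malchiodi--Martinazzi shows that \(\mu\) is concentrated on finitely many interior points \(x_1,\dots,x_N\). Near each \(x_i\), \(v_k\) is asymptotically radial. The comparison hypotheses needed for this, namely oscillation bounds on small circles and the \(4\pi\) quantisation for each bubble, must be obtained from local integral estimates, because no a priori energy bound is available. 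We would derive them from the local Pohozaev identity on balls \(B_r(x_i)\) together with the Green representation formula. The radial comparison with the disk case of~\cite{MM} then gives precise local profiles. In particular, it shows that the number of bubbles must grow, or the heights must diverge in a way tied to \(E(u_k)\), when \(E(u_k)\to\infty\).

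\textbf{Step 3: contradiction via the Green potential.} Away from the concentration points, \(v_k/\|\cdot\|\) is asymptotically a combination \(\sum_i m_iG(\cdot,x_i)\) of Green functions, with weights given by the masses of \(\mu\). Plugging this into the Pohozaev identity around each \(x_i\) yields balance conditions. The exterior potential, involving \(\nabla\) of the Robin function and the interaction terms, must vanish to leading order. For unbounded energy, the local profile near \(x_i\) is forced to match a decay rate that only a nonzero exterior potential of definite sign can provide. We expect a maximum-principle estimate on the exterior Green potential to show that, on \(\D\) with a conformal weight, this potential has the wrong sign. This is where simple connectivity enters: it guarantees a global Riemann map, so the Robin function pulls back to \(\log(1-|x|^2)\) plus a smooth conformal correction.

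\textbf{Main obstacle.} The hardest step is Step 2. We must establish the radial comparison hypotheses from purely local integral bounds, without assuming bounded energy, since the standard blow-up theory presupposes such a bound. Our first attempt would be to use the local Pohozaev identity to show that the energy in each annulus around \(x_i\) is controlled by the mass of \(\mu_k\) there. We would then bootstrap a Harnack-type oscillation bound on circles.
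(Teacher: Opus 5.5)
Your outline has a genuine gap. It never produces a global quantity that is controlled independently of the energy, and the steps that would have to carry the argument either rest on bounded-energy theory or are left unspecified.

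In the paper, simple connectedness enters through the total mass \(M=\int_\Om\rho\), not through the Robin function. On \(\D\) the boundary term of the Pohozaev identity has the constant weight \(z\cdot n=1\). Expanding \(-\partial_n v\) in a Fourier series (its zero mode is \(M/(2\pi)\)) and applying Parseval gives \(M^2+2\sum_{n\ge1}|\int_\D z^n\rho_\D|^2=4\pi\int_\D gP_\D\le4\pi G_*M\). Hence \(M\le C_\Om\) and \(M\le C_\Om/\sqrt{\log(1/\theta)}\). Your Step 1 extracts from the identity only \(\theta<\lambda_1(\Om)\) and the absence of boundary mass. Both are available anyway, from testing with the first eigenfunction and from moving planes.

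Your Step 2 asserts that the \(E\)-normalized measure concentrates at finitely many interior points with \(4\pi\) per bubble. That is exactly the Druet and Druet--Thizy theory, which assumes \(\sup_jE_j<\infty\), and nothing in your plan replaces it. The normalization is also the wrong one. Each bubble carries only about \(4\pi\) of energy, so when \(E\to\infty\) the measure \(u\rho\dd x/E\) gives vanishing weight to every individual bubble and loses the structure you need.

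The paper instead normalizes by the bounded mass, \(\nu_j=\rho_j/M_j\) and \(\Pi_j=P_j/M_j^2\), and proceeds as follows.
\begin{itemize}
\item A stress identity derived from the Green representation gives \(\Pi=\frac1{4\pi}\sum_ip_i^2\delta_{a_i}\).
\item The limit of the moment identity forces an atom with \(p\ge G_*^{-1}\), and near it the values \(M_ju_j\) cluster at \(A=4\pi/p\).
\item One gets \(M_j^2\log(1/\theta_j)\to0\) and \(M_j^2E_j\to0\). The last relation is what yields oscillation bounds on circles.
\item A bubble is located by minimal-radius rescaling, a classification of measures satisfying the planar stress identity, and Chen--Li. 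This produces local maxima with \(M_j\beta_j\to A\) and \(\int_{B_{L_j}(x_j)}u_j\rho_j\to4\pi\), \(L_j=e^{-\beta_j^2/32}\), which is the integral condition from which the Druet--Thizy comparison hypotheses are derived.
\end{itemize}
Your proposed remedy, local Pohozaev on annuli plus a Harnack bootstrap, does not say how to obtain any of these.

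Step 3 does not identify a contradiction mechanism. Balance conditions at concentration points are necessary conditions already met by concentrating families of bounded energy, which exist on the disk. So they cannot by themselves exclude \(E\to\infty\). The pullback of the Robin function under a Riemann map is just the conformal-radius formula and carries no sign obstruction.

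The paper's contradiction is quantitative and rests on the factorization \(E\le\gamma M=(\sqrt\theta\gamma)(M/\sqrt\theta)\). Both factors are controlled through the matching relation \(\log(\theta\beta^2/4)=4\pi R(x)-1-\beta I_{\mathrm{out}}+o(1)\), obtained from radial comparison at a maximum point \(x\) with \(u(x)=\beta\).
\begin{itemize}
\item At a global maximum, \(I_{\mathrm{out}}\ge0\) gives \(\theta\gamma^2\le C_\Om\).
\item At the local maximum attached to the atom, the key estimate is the upper bound \(I_{\mathrm{out}}\le CM_j\). The exterior potential is shown to be small, not to have a wrong sign. This is proved by inserting the gradient of a convex function, whose kernel \(K_Y\) is nonnegative, into the stress identity, and controlling the Pohozaev defect on \(\partial B_r(x_j)\) by radial comparison.
\item This gives \(\theta_j\beta_j^2\ge c\). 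Together with \(M_j\beta_j\to A\) it gives \(M_j^2/\theta_j\le C\), hence \(E_j\le C\).
\end{itemize}
Without the mass bound, the factorization, and this upper bound on the exterior potential, your plan has no step at which \(E\to\infty\) is actually contradicted.
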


The parameter used in \cite{MMT} is \(\lambda=\theta/2\), and its Laplacian has the opposite sign convention. Thus any \(\beta^\sharp>\max\{4\pi,B_\Om\}\) is an admissible threshold in the question on page~5 of \cite{MMT}.

The bound is uniform over all \(\theta>0\) and requires neither convexity nor star-shapedness. Together with the existence result of \cite{MMT}, it shows that positive constrained critical points have bounded energy on simply connected domains, whereas every positive energy level contains a positive critical point when the domain is not simply connected. We do not consider sign-changing critical points.

\begin{revision}
The compactness theory for solutions with bounded energy provides an analytic basis for these existence results. Adimurthi--Struwe~\cite{AS2000} studied compactness and concentration at isolated points for positive solutions of planar equations with critical exponential growth, and identified the {limit after rescaling} in the energy range allowing at most one blow-up point. Druet~\cite{Druet2006} extended the quantization analysis to arbitrary bounded energy levels. For equation~\eqref{eq:pde}, Druet--Thizy~\cite[Theorem~1.2]{DT} further proved that, if a sequence of positive solutions has uniformly bounded Dirichlet energy and \(\theta_k\to0\), then, after passage to a subsequence, there are finitely many distinct interior concentration points, each carrying exactly \(4\pi\) of energy, and
\[
 \int_\Om|\nabla u_k|^2\dd x\longrightarrow4\pi N,
 \qquad N\in\{1,2,\ldots\}.
\]
They also characterized the {locations and relative values of the local maxima} through a finite-dimensional system involving the Green function and its regular part. These results supply the compactness needed in critical point constructions and degree computations. Their conclusions for the full sequence assume bounded energy and therefore do not directly rule out the sequences with diverging energy that must be treated here.

Related problems with subcritical exponential growth also enter the analysis of the critical equation. Malchiodi--Martinazzi--Thizy~\cite{MMT} considered the following family, with \(\lambda>0\):
\[
 -\Delta u=p\lambda u^{p-1}e^{u^p}\quad\hbox{in}\ \Om,
 \qquad u>0\quad\hbox{in}\ \Om,\qquad u=0\quad\hbox{on}\ \partial\Om,
 \qquad 1\le p\le2.
\]
The endpoint \(p=1\) is of Liouville type, \(p=2\) corresponds to~\eqref{eq:pde}, and the growth for \(1<p<2\) is subcritical relative to \(H^1_0(\Om)\). After a suitable normalization, Malchiodi--Martinazzi--Thizy~\cite{MMT} connected this family to the Liouville equation by a homotopy to compute the degree above. Their normalization parameter \(\beta\) agrees with the Dirichlet energy only when \(p=2\); {results for large values of this parameter when} \(p<2\) must therefore be distinguished from the critical energy bound considered here. \rev{On p.~291 of~\cite{Malchiodi2026}, references~4 and~5 are cited for nonexistence at large values of the normalization parameter when \(1\le p<2\). Reference~5 is the work of Battaglia--Mancini--Thizy, listed as in preparation; the general simply connected case with \(p=2\) is stated separately.}

Other two-dimensional geometries lead to further existence and concentration results. De Marchis et al.~\cite{DMMT} studied the weighted functional \(\int_\Sigma(e^{u^2}-1)f\dd v_g\) on a closed Riemannian surface \((\Sigma,g)\), with \(f\) smooth and positive. A minmax construction for subcritical problems, compactness analysis, and the limit \(p\uparrow2\) yield positive constrained critical points at every positive \(H^1\) energy. Here the constraint uses the full \(H^1\) norm, rather than the Dirichlet integral alone. Chen et al.~\cite[Theorem~1.1]{CHLPW} studied \(-\Delta_g u_k=\lambda_k u_ke^{u_k^2}\) on the Poincar\'e disk \((\mathbb B^2,g)\), with positive solutions tending to zero at hyperbolic infinity. For a sequence with uniformly bounded Dirichlet energy and \(0<\lambda_k\to0\), they obtained, after suitable M\"obius transformations and passage to a subsequence,
\[
 \int_{\mathbb B^2}|\nabla_g u_k|_g^2\dd v_g\longrightarrow4\pi,
 \qquad |\nabla_g u_k|_g^2\dd v_g\rightharpoonup4\pi\delta_0.
\]
Here \(\delta_0\) is the unit Dirac measure at the origin: all the energy concentrates at one point, and its total amount tends to \(4\pi\). They also used an asymptotic energy expansion to obtain positive constrained critical points at energies slightly above \(4\pi\).

Critical exponential growth also occurs in higher dimensions and for higher-order operators, with different operators and energy constraints. On a smooth bounded domain \(D\subset\R^n\), \(n\ge2\), the critical growth for the first-order space \(W^{1,n}_0(D)\) is \(e^{\alpha|v|^{n/(n-1)}}\), and the associated Euler--Lagrange equations involve the \(n\)-Laplacian \(\Delta_n v=\operatorname{div}(|\nabla v|^{n-2}\nabla v)\). Under structural and growth assumptions on the nonlinearity, Adimurthi~\cite{Adimurthi1990} proved the existence of positive solutions to such critical Dirichlet problems.

On a smooth bounded domain \(D\subset\R^{2m}\), Adams~\cite{Adams} established the sharp exponential inequality for \(H^m_0(D)\): under the normalization \(\|\nabla^m v\|_{L^2(D)}\le1\), the integral \(\int_D e^{\alpha v^2}\dd x\) is uniformly bounded if and only if \(\alpha\le\beta_m:=2^{2m}\pi^m m!\), for \(\alpha>0\). Here \(\nabla^m v=\Delta^{m/2}v\) for even \(m\), and \(\nabla^m v=\nabla\Delta^{(m-1)/2}v\) for odd \(m\). At the optimal constant, Lu--Yang~\cite[Theorem~1.2]{LuYang} proved the existence of extremals in dimension four, where \(\beta_2=32\pi^2\). DelaTorre--Mancini~\cite{DelaTorreMancini} subsequently established attainment for every \(m\ge1\), including improved inequalities of Adimurthi--Druet type, using blow-up analysis of subcritical extremals and comparisons with polyharmonic functions on annular regions. These results concern constrained maximizers at the critical threshold and do not assert their positivity on general domains.

Energy quantization for higher-order equations is more directly related to the planar concentration theory. Martinazzi~\cite{Martinazzi2009} and Martinazzi--Struwe~\cite{MS} considered
\[
 (-\Delta)^m u_k=\lambda_k u_k e^{m u_k^2}\quad\hbox{in}\ D,
 \qquad u_k=\partial_\nu u_k=\cdots=\partial_\nu^{m-1}u_k=0
 \quad\hbox{on}\ \partial D.
\]
For positive solutions with uniformly bounded energy and \(0<\lambda_k\to0\), Martinazzi {proved convergence after rescaling to} \(\eta_0(x)=\log\frac{2}{1+|x|^2}\). When the limiting energy is positive, Martinazzi--Struwe further proved that
\[
 \begin{gathered}
 \lim_{k\to\infty}\int_D u_k(-\Delta)^m u_k\dd x=K\Lambda_1,
 \qquad K\in\{1,2,\ldots\},\\
 \Lambda_1=(2m-1)!|\mathbb S^{2m}|=\frac{\beta_m}{m}.
 \end{gathered}
\]
Thus the limiting total energy is a positive integer multiple of \(\Lambda_1\), which equals \(4\pi\) for \(m=1\) and \(16\pi^2\) for \(m=2\) with this normalization. This extends energy quantization to higher-order critical equations, again under an a priori bound on the energy of the sequence.
\end{revision}

We argue by contradiction. Using the notation of Section~\ref{sec:notation}, let \(M_j\) denote the total mass of the right-hand side and \(\gamma_j\) the maximum of \(u_j\). Suppose that there is a sequence of positive solutions with \(E_j\to\infty\). Integration by parts and the zero boundary condition give
\[
 E_j=\int_\Om u_j\rho_j\dd x\le\gamma_jM_j
 =\bigl(\sqrt{\theta_j}\gamma_j\bigr)
  \bigl(M_j/\sqrt{\theta_j}\bigr).
\]
We estimate the two factors separately along this sequence. Local comparison near a global maximum bounds the first. To bound the second, we study concentration of the normalized measures and estimate integrals near a suitable local maximum.

The main local tool is the radial comparison and refined radial expansion of Druet--Thizy~\cite[Proposition~3.1(a), (b) and (3.9)]{DT}, recalled in Theorem~\ref{tool:DTlocal}. Its hypotheses include convergence after rescaling, a pointwise bound for the nonlinearity, and a gradient bound. Given the rescaled convergence and the required parameter estimates, Lemma~\ref{lem:budget} derives the latter two bounds from a local weighted integral condition. We verify these assumptions at each maximum point where the comparison is used. The global blow-up theorem in \cite[Theorem~1.2]{DT} assumes bounded energy and therefore cannot be applied directly to the sequence \(E_j\to\infty\). We use its conclusions about the parameter and the maximum only in the auxiliary contradiction argument of Lemma~\ref{lem:logtheta}, where the contradiction assumption and the total mass estimate first give an energy bound.

\begin{revision}
Simple connectedness enters through the total mass estimate in Section~\ref{sec:mass}. We follow the method used by Battaglia~\cite{Battaglia2019} to obtain mass bounds for Liouville and H\'enon--Lane--Emden equations. Pulling the equation back under a conformal map \(\phi:\D\to\Om\) introduces the weight \(a=|\phi'|^2\); the Pohozaev identity for this weighted equation gives \eqref{eq:weighted-pohozaev}. For the subsequent analysis of the normalized measures, we further expand the negative outward normal derivative of the solution on the disk in a Fourier series and use Parseval's identity to rewrite the boundary term. This gives the integral identity \eqref{eq:moment} in Lemma~\ref{lem:moment}, from which we obtain \(M_j\le C_\Om\) and decay of \(M_j\) when \(\theta_j\to0\).
\end{revision}
 Thus \(E_j\to\infty\) implies \(\gamma_j\to\infty\). In Section~\ref{sec:local}, we verify the hypotheses of radial comparison at a global maximum and combine the radial expansion with the Green representation formula to prove
\[
 \theta_j\to0,\qquad \sqrt{\theta_j}\gamma_j\le C_\Om.
\]

For the second factor, we consider the normalized measures \(\nu_j,\Pi_j\) in \eqref{eq:normalizations}. The boundary strip estimate rules out boundary mass in their weak limits. Using {\rev{identity~\eqref{eq:moment}}}, the Green representation formula, and local exponential integrability, Section~\ref{sec:pressure} shows that the weak limit \(\nu\) of \(\nu_j\) has an atom of positive mass. It also determines the relation between the two limiting atomic masses and the distribution of function values near an atom. These results yield \(M_j^2\log(1/\theta_j)\to0\) and \(M_j^2E_j\to0\) in Subsection~\ref{sec:strict}, providing the parameter and oscillation estimates needed for the subsequent rescaling.

Fix an atom \(a_0\), and write \(p=\nu(\{a_0\})>0\) and \(A=4\pi/p\). In Section~\ref{sec:halfmass}, we select a ball of minimal radius near \(a_0\) with \(\nu_j\) mass \(p/2\). After rescaling, a measure classification argument and the Chen--Li theorem~\cite{CL} identify the limit. Section~\ref{sec:center} {proves local \(C^2\) convergence of the corresponding functions} and selects local maximum points \(x_j\to a_0\) of \(u_j\). Propositions~\ref{prop:representative} and \ref{prop:neighborhood} give
\[
 \beta_j:=u_j(x_j),\qquad M_j\beta_j\to A,\qquad
 \int_{B_{L_j}(x_j)}u_j\rho_j\dd x\to4\pi,
 \quad L_j=e^{-\beta_j^2/32}.
\]
Together with the rescaled convergence and the parameter estimates, this integral condition verifies all the hypotheses of Lemma~\ref{lem:budget}. Radial comparison and the formulas in Lemma~\ref{lem:matching} therefore apply at these local maximum points.

Section~\ref{sec:absorb} estimates the Green potential outside a small ball centered at \(x_j\), using radial comparison, a local Pohozaev identity, and an integral estimate with a suitable test vector field. Proposition~\ref{prop:absorb} bounds this exterior integral by \(CM_j\); {\rev{relation~\eqref{eq:matching}}} then gives \(\theta_j\beta_j^2\ge c>0\). Since \(M_j\beta_j\to A\), it follows that
\[
 \frac{M_j^2}{\theta_j}
 =\frac{(M_j\beta_j)^2}{\theta_j\beta_j^2}\le C.
\]
Combining this bound for the second factor with the global maximum estimate, we obtain
\[
 E_j\le\bigl(\sqrt{\theta_j}\gamma_j\bigr)
          \bigl(M_j/\sqrt{\theta_j}\bigr)\le C,
\]
contrary to \(E_j\to\infty\). It suffices to work along a subsequence associated with one fixed atom of positive mass: the constants along this subsequence are independent of \(j\).

\begin{revision}
The preliminary estimates needed below are collected in Section~\ref{sec:setting}. The remaining sections follow the steps of this argument, ending with the exterior potential estimate and the proof of Theorem~\ref{thm:main} in Section~\ref{sec:absorb}.
\end{revision}

\section{Preliminaries}\label{sec:setting}

\subsection{Notation, regularity, and normalized measures}\label{sec:notation}
We write \(B_r(a)=\{x\in\R^2:|x-a|<r\}\), \(B_r=B_r(0)\), \(\D=B_1(0)\), and \(S^1=\partial\D\). The vectors \(n\) and \(\tau\) are the outward unit normal vector and a unit tangent vector to the boundary under consideration, with \(\partial_n v=\nabla v\cdot n\) and \(\partial_\tau v=\nabla v\cdot\tau\). Throughout, \(\Om\subset\R^2\) is a fixed smooth bounded simply connected domain, and \(H^1_0(\Om)\) is equipped with the norm \(\|\nabla u\|_{L^2}\). The notation \(K\Subset\Om\) means that \(K\) is compactly contained in \(\Om\). Unless stated otherwise, \(L^q\) estimates are understood for \(1\le q<\infty\).
\begin{revision}
{For every fixed \(u\in H^1_0(\Om)\) and \(c>0\), the Trudinger--Moser inequality gives \(e^{cu^2}\in L^1(\Om)\). H\"older's inequality and local exponential integrability then yield \(F\in C^1(H^1_0(\Om),\R)\), with}
\[
 DF(u)[\varphi]=2\int_\Om ue^{u^2}\varphi\dd x.
\]

If \(u\in\mathcal M_\beta\), \(\beta>0\), is a constrained critical point, the Lagrange multiplier rule gives a constant \(\kappa\) such that
\[
 \int_\Om ue^{u^2}\varphi\dd x
 =\kappa\int_\Om\nabla u\cdot\nabla\varphi\dd x
 \qquad(\varphi\in H^1_0(\Om)).
\]
Testing with \(\varphi=u\) yields \(\kappa=\beta^{-1}\int_\Om u^2e^{u^2}\dd x>0\). Thus \(u\) solves \(-\Delta u=\theta ue^{u^2}\) weakly with \(\theta=\kappa^{-1}>0\). Since \(ue^{u^2}\in L^q(\Om)\) for every finite \(q\), the global \(W^{2,q}\) estimates followed by Schauder regularity give \(u\in C^2(\cl\Om)\); see \cite[Chapters~6 and~9]{GT}. If \(u\ge0\), the strong maximum principle gives \(u>0\) in \(\Om\). The same regularity argument applies to any weak \(H^1_0(\Om)\) solution of \eqref{eq:pde}.
\end{revision}

For a solution of \eqref{eq:pde}, define
\begin{equation}\label{eq:notation}
 \rho:=\theta ue^{u^2},\qquad P:=\theta(e^{u^2}-1),\qquad
 M:=\int_\Om\rho\dd x,\qquad \gamma:=\max_{\cl\Om}u.
\end{equation}
Here \(\rho\) is the nonlinear right-hand side, and \(M\) is the total mass of \(\rho\dd x\). The auxiliary function \(P\) satisfies \(P=2\int_0^u\theta t e^{t^2}\dd t\), hence \(\nabla P=2\rho\nabla u\). We abbreviate \(E=E(u)\) and shall also use the weighted integral \(\int_Bu\rho\) over a ball \(B\). Integration by parts using the zero boundary data gives
\begin{equation}\label{eq:energyidentity}
 E=\int_\Om u\rho\dd x\le\gamma M.
\end{equation}

Let \(G\) be the Dirichlet Green function of \(-\Delta\) on \(\Om\), so that, for each \(y\in\Om\),
\[
 -\Delta_xG(x,y)=\delta_y\quad\hbox{in }\Om,\qquad
 G(x,y)=0\quad\hbox{for }x\in\partial\Om,
\]
where the equation is understood in the sense of distributions and \(\delta_y\) is the unit Dirac measure at \(y\). For a finite nonnegative Borel measure \(\eta\) on \(\Om\), define its Green potential by
\[
 (G\eta)(x):=\int_\Om G(x,y)\dd\eta(y),\qquad x\in\Om.
\]
The Green potential of a general measure may take the value \(+\infty\); finiteness must be checked before using pointwise estimates. The regular part and the Robin function are defined by
\begin{equation}\label{eq:green}
 H(x,y):=\frac1{2\pi}\log\frac1{|x-y|}-G(x,y),\qquad R(x):=H(x,x).
\end{equation}
In particular, our \(H\) is the negative of the regular part used in \cite{MMT}. Both \(G\) and \(H\) are symmetric. The Green representation formula is
\(u(x)=\int_\Om G(x,y)\rho(y)\dd y\).
The maximum principle and interior estimates for harmonic functions give the following standard bounds; see \cite[Chapter~2]{GT}. For every \(K\Subset\Om\),
\begin{equation}\label{eq:greenbounds}
 \begin{split}
 &0<G(x,y)\le\frac1{2\pi}\log\frac{L_0}{|x-y|},\qquad
 \sup_{y\in\Om}\|G(\cdot,y)\|_{L^q(\Om)}\le C_{q,\Om},\\
 &|\nabla_xG(x,y)|\le\frac{C_K}{|x-y|},\qquad
 |H(x,y)|+|\nabla_xH(x,y)|\le C_K\quad(x\in K, y\in\Om).
 \end{split}
\end{equation}
Here \(L_0>\operatorname{diam}\Om\) is fixed, and pointwise estimates for singular kernels are understood for \(x\ne y\). Since \(\Omega\subset B_{L_0}(y)\), comparing \(G(\cdot,y)\) with the Green function of the disk with pole at $y$ yields the first inequality. Integrating in polar coordinates and setting \(t=\log(L_0/r)\), we obtain
\[
 \begin{aligned}
  \int_\Om G(x,y)^q\dd x
  &\le(2\pi)^{1-q}\int_0^{L_0}r\left(\log\frac{L_0}{r}\right)^q\dd r\\
  &=(2\pi)^{1-q}L_0^2\int_0^\infty t^qe^{-2t}\dd t<\infty.
 \end{aligned}
\]
The right-hand side is independent of \(y\), proving the \(L^q\) estimate.

Choose an open set \(U\) with \(K\Subset U\Subset\Om\). For each \(z\in U\), symmetry shows that \(H(z,\cdot)\) is harmonic in \(\Om\), with boundary values \((2\pi)^{-1}\log(1/|z-\xi|)\), \(\xi\in\partial\Om\). These values are uniformly bounded because \(\dist(U,\partial\Om)>0\) and \(\Om\) is bounded. The maximum principle therefore gives
\[
 \sup_{z\in U,\,y\in\Om}|H(z,y)|\le C_U.
\]
Applying the interior gradient estimate to \(H(\cdot,y)\) on \(U\), we find
\[
 \sup_{x\in K}|\nabla_xH(x,y)|
 \le\frac{C}{\dist(K,\partial U)}\sup_{z\in U}|H(z,y)|\le C_K.
\]
Both constants are independent of \(y\in\Om\). Differentiating \eqref{eq:green} now yields
\[
 |\nabla_xG(x,y)|\le\frac{1}{2\pi|x-y|}+C_K
 \le\frac{C'_K}{|x-y|},
\]
where the last inequality uses boundedness of \(\Om\).

For a sequence of solutions, {we add the subscript} \(j\) {to these quantities}; in particular, \(E_j=E(u_j)\). Define
\begin{equation}\label{eq:normalizations}
 w_j=\frac{u_j}{M_j},\qquad s_j=M_ju_j=M_j^2w_j,
 \qquad \dd\nu_j=\frac{\rho_j}{M_j}\dd x,
 \qquad \dd\Pi_j=\frac{P_j}{M_j^2}\dd x.
\end{equation}
Here \(w_j,s_j\) are normalized functions, and \(\nu_j,\Pi_j\) are finite nonnegative measures with \(\nu_j(\Om)=1\). The Green representation formula, \eqref{eq:energyidentity}, and the density formulas above give
\begin{equation}\label{eq:ratios}
 w_j=G\nu_j,\qquad E_j=\int_\Om s_j\dd\nu_j,
 \qquad \frac{\dd\Pi_j}{\dd\nu_j}=
 \frac{1-e^{-u_j^2}}{s_j}.
\end{equation}
Here \(\dd\Pi_j/\dd\nu_j\) is the Radon--Nikodym derivative. Since \(u_j>0\), the ratio is defined everywhere in \(\Om\).

The inequality \(1-e^{-t^2}\le\min\{t^2,1\}\) for \(t>0\) gives
\begin{equation}\label{eq:pressure-comparison}
 0\le P\le\min\{u,u^{-1}\}\rho,
 \qquad
 \ind_{\{s_j\ge L\}}\dd\Pi_j\le L^{-1}\dd\nu_j\quad(L>0).
\end{equation}

We regard \(\nu_j,\Pi_j\) as measures on \(\cl\Om\), assigning them zero mass on \(\partial\Om\). Weak convergence means convergence of integrals for every function in \(C(\cl\Om)\). The boundary mass of their weak limits is determined separately in Lemma~\ref{lem:boundary-zero}. Local weak convergence of rescaled measures on \(\R^2\) means convergence for every compactly supported continuous function.

For a Borel set \(B\), write \(\eta|_B(C)=\eta(B\cap C)\) for the restriction of a measure and \(\ind_B\) for the characteristic function. If \(T\) is measurable, its pushforward is defined by \((T_\#\eta)(B)=\eta(T^{-1}(B))\). For a finite signed measure \(\eta\), let \(\|\eta\|_{\TV(B)}=|\eta|(B)\) denote its total variation on \(B\). If \(\eta=f\lambda\), where \(\lambda\) is nonnegative, this equals \(\int_B|f|\dd\lambda\). When using weak convergence to pass to the limit in the mass of a ball, we require its boundary to have zero limiting measure, that is, \(\eta(\partial B_r(a))=0\); when two limiting measures occur, this condition is imposed for both. For \(t\in\R\), set \(t_+:=\max\{t,0\}\), and for \(t>0\), set \(\log^+t:=\max\{\log t,0\}\).

Constants denoted by the same letter may change from one formula to the next; subscripts indicate their parameter dependence. Unless stated otherwise, \(o(1)\) denotes a quantity tending to zero as the sequence index tends to infinity, with all auxiliary parameters fixed.

\subsection{Auxiliary results}
We recall the boundary monotonicity, blow-up, and Liouville classification results used below.

\begin{tool}[Proposition 2.1 in \cite{MMT}]\label{tool:boundary}
Let \(\Om\) be a smooth bounded planar domain. For \(q\in\partial\Om\), let \(n(q)\) be the outward unit normal and define the boundary normal coordinates
\[
 \Psi(q,t)=q-tn(q),\qquad q\in\partial\Om,\quad 0\le t\le\delta_0.
\]
Here \(t\) is the inward normal distance from \(q\). There exists a sufficiently small \(\delta_0>0\), depending only on \(\Om\), such that every point in the strip \(\{x\in\cl\Om:\dist(x,\partial\Om)\le\delta_0\}\) has a unique representation \(x=\Psi(q,t)\), where \(q\) is the nearest boundary point and
\(\dist(\Psi(q,t),\partial\Om)=t\). Let \(f\in C^1([0,\infty))\) be nonnegative and nondecreasing. Every positive solution of \(-\Delta v=f(v)\) with zero Dirichlet boundary data has the following property: for each fixed \(q\in\partial\Om\), the function
\[
 t\longmapsto v(\Psi(q,t))=v(q-tn(q))
\]
is strictly increasing on \([0,\delta_0]\). Moreover, there is a compact set \(K_{\mathrm{peak}}\Subset\Om\), depending only on \(\Om\), that contains every critical point, namely every point where \(\nabla v=0\), of every such solution.
\end{tool}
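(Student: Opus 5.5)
This result is quoted from \cite[Proposition~2.1]{MMT}; I would prove it by the moving plane method of Gidas--Ni--Nirenberg, localized near the boundary, which needs exactly the hypotheses given: \(f\) is \(C^1\) (so locally Lipschitz), \(f\) is nonnegative and nondecreasing, and \(v>0\) in \(\Om\) with \(v=0\) on \(\partial\Om\).

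\textbf{Geometry.} Since \(\partial\Om\) is smooth and compact, the tubular neighbourhood theorem gives \(\delta_1>0\) such that \(\Psi\) is a diffeomorphism from \(\partial\Om\times[0,\delta_1]\) onto the closed \(\delta_1\)-strip. It also gives \(\dist(\Psi(q,t),\partial\Om)=t\), with \(q\) the unique nearest point.

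Next I use uniform reflection. For each \(q\in\partial\Om\) and each \(\lambda\in(0,2\delta_0]\), consider the line \(T_\lambda\) orthogonal to \(n(q)\) at inward distance \(\lambda\) from the tangent line. Let \(\Sigma_\lambda\) be the cap of \(\Om\) cut off by \(T_\lambda\) on the side of \(q\). I need the reflection \(\Sigma_\lambda'\) of \(\Sigma_\lambda\) across \(T_\lambda\) to stay inside \(\Om\). Choosing \(\delta_0\le\delta_1/2\) small in terms of the curvature bound and the interior/exterior ball radius of \(\partial\Om\), this holds uniformly in \(q\). Locally \(\partial\Om\) is a graph over the tangent line with \(C^2\) norm bounded independently of \(q\), and for small caps the reflected cap stays inside. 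The uniformity of this choice is the main technical obstacle, but it is a compactness argument.

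\textbf{Moving planes.} Set \(w_\lambda(x)=v(x^\lambda)-v(x)\) on \(\Sigma_\lambda\), where \(x^\lambda\) is the reflection of \(x\). Then
\[
-\Delta w_\lambda=f(v(x^\lambda))-f(v(x))=c_\lambda(x)w_\lambda,
\]
with \(c_\lambda\) bounded by \(\|f'\|_{L^\infty([0,\max v])}\). On the curved part of \(\partial\Sigma_\lambda\) we have \(w_\lambda=v(x^\lambda)>0\), and on \(T_\lambda\) we have \(w_\lambda=0\).

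For \(\lambda\) small, \(\Sigma_\lambda\) is thin, so the maximum principle in narrow domains gives \(w_\lambda\ge0\). Let \(\bar\lambda\) be the supremum of those \(\lambda\le2\delta_0\) for which \(w_\mu\ge0\) holds for all \(\mu\le\lambda\). Suppose \(\bar\lambda<2\delta_0\). The strong maximum principle gives \(w_{\bar\lambda}>0\) in \(\Sigma_{\bar\lambda}\), because \(w_{\bar\lambda}\not\equiv0\) by the positive boundary values. The standard argument, combining the narrow-domain principle near the boundary with compactness inside, then lets us push \(\lambda\) slightly beyond \(\bar\lambda\), a contradiction. Hence \(w_\lambda>0\) in \(\Sigma_\lambda\) for all \(\lambda\in(0,2\delta_0]\).

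The Hopf lemma on \(T_\lambda\) then yields \(\partial_{n(q)}v<0\) there, i.e. \(-\nabla v\cdot n(q)>0\) at all points of \(\Sigma_{2\delta_0}\) on the normal segment. In particular, \(t\mapsto v(q-tn(q))\) has strictly positive derivative on \([0,\delta_0]\). At \(t=0\) this follows directly from Hopf's lemma applied to \(v\), using \(-\Delta v=f(v)\ge0\).

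\textbf{Critical points.} The argument shows more: for every point \(x\) with \(\dist(x,\partial\Om)\le\delta_0\) and every direction \(e\) close to \(n(q)\), where \(q\) is the nearest boundary point, we have \(\nabla v(x)\cdot e<0\). Indeed, tilted planes work equally well by the same uniform geometry. Hence \(\nabla v\ne0\) in the \(\delta_0\)-strip, and \(K_{\mathrm{peak}}:=\{x\in\Om:\dist(x,\partial\Om)\ge\delta_0\}\) contains all critical points of every such solution. Both \(\delta_0\) and \(K_{\mathrm{peak}}\) depend only on \(\Om\) and not on \(f\) or \(v\), since \(f\) entered only through the qualitative maximum principles.
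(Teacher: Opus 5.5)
There is a genuine gap in the geometric step itself, not merely in its uniformity. You assert that, for small \(\lambda\), the part \(\Sigma_\lambda\) of \(\Om\) lying on the side of \(q\) of \(T_\lambda\) is a small cap whose reflection stays in \(\Om\). A local \(C^2\) graph bound does not give this. The cap is small only where \(\partial\Om\) curves towards the interior, that is, where it is strictly convex, and then only for \(\lambda\) limited by a positive lower curvature bound. At locally non-convex boundary points, \(\Sigma_\lambda\) need not be localized near \(q\), and its reflection can leave \(\Om\) for every \(\lambda>0\).

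For example, let \(\Om\) be the smoothed C-shaped domain obtained from \(\{1<|x|<1.2\}\) by removing a thin sector around the negative \(x_2\)-axis. Take \(q=(0,1)\) on the inner arc. Then \(n(q)=(0,-1)\), and \(\Sigma_\lambda=\Om\cap\{x_2<1+\lambda\}\) is connected and contains \((1.1,0)\). The reflection of this point across \(T_\lambda\) is \((1.1,2+2\lambda)\), far outside \(\Om\). So \(w_\lambda\) is not defined on all of \(\Sigma_\lambda\). If \(v\) is extended by zero, \(w_\lambda\) is negative at such points, so \(w_\lambda\ge0\) is simply false. No choice of \(\delta_0\) and no compactness argument repairs this. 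The paper stresses that \(\Om\) need not be convex, so this is exactly the case that matters.

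The missing idea, on which the result quoted from \cite{MMT} rests, is to apply a Kelvin transform before moving planes. This is the de Figueiredo--Lions--Nussbaum device for non-convex planar domains. Fix \(R>0\) below the uniform exterior-disc radius of \(\Om\), and set \(\xi=q+Rn(q)\) and \(I(x)=\xi+R^2(x-\xi)/|x-\xi|^2\). In two dimensions no extra factor appears: \(\tilde v=v\circ I\) solves \(-\Delta\tilde v=R^4|y-\xi|^{-4}f(\tilde v)\) in \(\Om^*=I(\Om)\subset B_R(\xi)\).

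The boundary of \(\Om^*\) is tangent to \(\partial B_R(\xi)\) at \(q\). For \(R\) small it is uniformly strictly convex near \(q\). Since \(\Om^*\subset B_R(\xi)\), the caps cut off by lines orthogonal to \(n(q)\) are uniformly small and reflect into \(\Om^*\). Reflection across such a line moves points of the cap closer to \(\xi\). Let \(c_\lambda\) be the difference quotient of \(f\) multiplied by \(R^4|y^\lambda-\xi|^{-4}\). Then
\[
 -\Delta w_\lambda-c_\lambda w_\lambda
 =R^4\bigl(|y^\lambda-\xi|^{-4}-|y-\xi|^{-4}\bigr)f(\tilde v(y))\ge0 .
\]
This is where \(f\ge0\) is really needed. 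In your version it enters only through Hopf's lemma at \(t=0\), which is a symptom of the missing step.

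Your narrow-domain start, continuation argument and Hopf lemma on \(T_\lambda\) then go through for \(\tilde v\). Since \(I(q-tn(q))=q+\frac{Rt}{R+t}n(q)\), the monotonicity transfers back to \(t\mapsto v(q-tn(q))\) on an interval depending only on \(\Om\). The statement about critical points follows directly from the strictly positive inward normal derivative on the whole strip, so your tilted planes are unnecessary.
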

 For \(f(\xi)=\theta\xi e^{\xi^2}\), \(\xi\ge0\), we have \(f'(\xi)=\theta(1+2\xi^2)e^{\xi^2}>0\), and the constants above can be taken to be independent of \(\theta\).

\begin{tool}[]\label{tool:DTglobal}
Suppose that \(u_j\) satisfies \eqref{eq:pde}, \(\sup_j E_j<\infty\), and \(\gamma_j\to\infty\). Then, along a subsequence,
\(\theta_j\to0\) and \(\sqrt{\theta_j}\gamma_j\to c\in(0,\infty)\).
\end{tool}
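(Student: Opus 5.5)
The statement is essentially a consequence of the bounded-energy blow-up theory of Druet--Thizy \cite[Theorem~1.2]{DT} together with the boundary monotonicity of Theorem~\ref{tool:boundary}. I would argue in three steps, in this order: first \(\theta_j\to0\), then an upper bound for \(\sqrt{\theta_j}\gamma_j\), and finally a matching lower bound.

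\emph{Step 1: \(\theta_j\to0\).} Suppose instead that \(\theta_j\ge\theta_*>0\) along a subsequence. The Trudinger--Moser inequality gives \(\lambda_1(\Om)\,\|u_j\|_{L^2}^2\le E_j\), and the Lagrange-multiplier relation \(E_j=\theta_j\int_\Om u_j^2e^{u_j^2}\dd x\) then bounds \(\int_\Om u_j^2e^{u_j^2}\dd x\) uniformly. Now take a global maximum point \(y_j\), which lies in \(K_{\mathrm{peak}}\) by Theorem~\ref{tool:boundary}. Rescale around it with \(r_j^{-2}=\theta_j\gamma_j^2e^{\gamma_j^2}\) and set \(\tilde u_j(z)=\gamma_j\bigl(u_j(y_j+r_jz)-\gamma_j\bigr)\). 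The rescaled functions converge to a solution of \(-\Delta U=2e^{2U}\), which is \(U=-\log(1+|z|^2)\) by the Chen--Li classification \cite{CL}. This concentration profile carries energy at least \(4\pi\) near \(y_j\), with scale \(r_j\to0\). Combining the Green representation at \(y_j\) with this local profile gives \(\gamma_j\approx\gamma_j^{-1}\cdot\frac{1}{2\pi}\log(1/r_j)\cdot 2\), which forces \(\log(1/r_j)\sim\gamma_j^2\), i.e.\ \(\theta_j\gamma_j^2e^{\gamma_j^2}\approx e^{\gamma_j^2}\cdot(\text{subexponential})\). Feeding this back into the energy identity shows that \(\theta_j\) must be small: a fixed \(\theta\) together with \(\gamma_j\to\infty\) makes \(\int u_j^2 e^{u_j^2}\) blow up, since a ball of radius comparable to \(r_j\) already contributes \(\gtrsim\gamma_j^2\).

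\emph{Step 2: upper bound.} The key quantitative input is the quantization statement of \cite{DT}: the limiting energy is \(4\pi N\), and near each concentration point \(x_{i,j}\) the weighted mass \(\gamma_{i,j}\int_{B_\delta(x_{i,j})}\rho_j\) tends to \(4\pi\). Away from the concentration points, \(\gamma_jM_j\)-normalized Green potentials converge to \(\sum_i 4\pi G(\cdot,x_i)\). Evaluating the Green representation at the global maximum gives
\[
\gamma_j\,u_j(y_j)\approx\gamma_j^2=2\log\frac1{r_j}+O(1).
\]
Since \(r_j^{-2}=\theta_j\gamma_j^2e^{\gamma_j^2}\), this yields \(\log(\theta_j\gamma_j^2)=O(1)\). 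The two-sided bound on \(\log(\theta_j\gamma_j^2)\) comes from the refined radial expansion \cite[(3.9)]{DT}, which also controls the \(O(1)\) term via the Robin function \(R\) on \(K_{\mathrm{peak}}\). This gives \(\sqrt{\theta_j}\gamma_j\le C\).

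\emph{Step 3: lower bound and conclusion.} For the lower bound, the same identity shows \(\theta_j\gamma_j^2\ge c>0\); equivalently, the constant in the matching relation stays finite because \(R\) is bounded on compact sets. With \(\sqrt{\theta_j}\gamma_j\) confined to a compact subset of \((0,\infty)\), extracting a subsequence gives the limit \(c\in(0,\infty)\).

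\emph{Main obstacle.} I expect the hardest part to be making the matching between the inner profile and the outer Green behavior precise, because there can be several concentration points with possibly different heights. I would handle this by citing \cite[Theorem~1.2]{DT} directly, since it asserts exactly these asymptotics under bounded energy, rather than re-deriving them.
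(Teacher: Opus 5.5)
Your fallback of citing \cite[Theorem~1.2]{DT} is exactly what the paper does: its proof consists of applying \cite[Theorem~1.2(d)]{DT} with \(f_j\equiv1\). The paper also makes one remark that you omit. Item (d) is formulated for the first concentration point of \cite{DT}, and that point is chosen as a global maximum point \cite[proof of Proposition~2.1, Step~1]{DT}. This identification is what turns the conclusion into a statement about \(\gamma_j=\max u_j\), so you should state it. Steps~1--3 should then be dropped, because they do not work as self-contained arguments.

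\emph{Step~1.} The claimed blow-up is false. Since \(r_j^2\gamma_j^2e^{\gamma_j^2}=1/\theta_j\), the ball \(B_{Tr_j}(y_j)\) contributes only \((m_T+o(1))/\theta_j\) to \(\int_\Om u_j^2e^{u_j^2}\), which stays bounded when \(\theta_j\ge\theta_*\). The Green representation at \(y_j\) yields only the inequality \(\log(1/r_j)\le(2\pi/m_T+o(1))\gamma_j^2\), which is consistent with \(\theta_j\ge\theta_*\). In fact \(\theta_j\to0\) is not an independent step. It follows from \(\theta_j\gamma_j^2\le C\) and \(\gamma_j\to\infty\), and that upper bound comes from a refined matching relation of the type \eqref{eq:matching} together with \(I_{\mathrm{out},j}\ge0\).

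\emph{Step~2.} The identity \(\gamma_j^2=2\log(1/r_j)+O(1)\) merely restates \(\log(\theta_j\gamma_j^2)=O(1)\); it is not an argument for it.

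\emph{Step~3.} This has a genuine gap. Boundedness of \(R\) cannot give \(\theta_j\gamma_j^2\ge c\), because the matching relation contains the term \(-\gamma_jI_{\mathrm{out},j}\), so you would need \(\gamma_jI_{\mathrm{out},j}=O(1)\). That requires control of the Green potential of all mass outside the bubble core. For instance, a second concentration point with height \(\gamma_{i,j}\ll\gamma_j\) would contribute about \(4\pi G(x_1,x_i)\gamma_j/\gamma_{i,j}\). You would also have to exclude the regime \(\log(1/\theta_j)\ge\eps\gamma_j^2\), where the local comparison behind \eqref{eq:matching} is unavailable, since Lemma~\ref{lem:budget} needs \(\log\theta_j=o(\gamma_j^2)\). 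Both points are the deep content of \cite{DT}. Indeed, the paper uses the present statement to prove Lemma~\ref{lem:logtheta}, and it needs all of Proposition~\ref{prop:absorb} to obtain the analogous lower bound without an energy bound.

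\emph{Minor points.} The inequality \(\lambda_1\|u_j\|_{L^2}^2\le E_j\) is Poincar\'e's inequality, not Trudinger--Moser. With your scaling, the limit profile is \(-\log(1+|z|^2/4)\), which solves \(-\Delta U=e^{2U}\).
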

This follows from \cite[Theorem 1.2(d)]{DT} with \(f_j\equiv1\); the constant coefficient satisfies its hypotheses. The first concentration point selected there is a global maximum point \cite[proof of Proposition 2.1, Step 1]{DT}, so the conclusion applies to \(\gamma_j\). We use this result only in Lemma~\ref{lem:logtheta}, where the contradiction assumption and the total mass estimate ensure bounded energy.

\begin{tool}[]\label{tool:DTlocal}
Let \(v_j>0\) solve \(-\Delta v_j=\theta_jv_je^{v_j^2}\) on \(B_{d_j}(0)\). Assume that \(\theta_j,d_j\) are bounded, \(\beta_j=v_j(0)\to\infty\), and \(\nabla v_j(0)=0\). Set
\begin{equation}\label{eq:Uz-mu_j}
    	U(z)=-\log(1+\lvert z\rvert^2/4),\quad\mu_j=(\theta_j\beta_j^2e^{\beta_j^2})^{-1/2}.
    \end{equation}
Assume that
\[
 \mu_j\to0,\qquad d_j/\mu_j\to\infty,
 \qquad \beta_j(v_j(\mu_j z)-\beta_j)\to U(z)
 \quad\hbox{in } C^2_{\mathrm{loc}}(\R^2),
\]
and that

\begin{equation}\label{eq:DTconditions}
 |x|^2\theta_jv_j^2e^{v_j^2}\le C,
 \qquad |x||\nabla v_j|\le C/\beta_j\quad\hbox{in } B_{d_j}.
\end{equation}
Let \(B_j=B_j(r)\) be the radial solution of the same equation, that is,
\[
 -B_j''(r)-\frac1rB_j'(r)=\theta_jB_j(r)e^{B_j(r)^2}
 \quad(0<r<d_j),\qquad B_j(0)=\beta_j,\quad B_j'(0)=0.
\]
In comparisons on balls, \(B_j(x)\) means \(B_j(|x|)\). Then
\begin{equation}\label{eq:DTcomparison}
 \sup_{0\le r\le d_j}|\cl v_j(r)-B_j(r)|=o(\beta_j^{-1}),
 \quad |v_j-B_j|\le C\beta_j^{-1},
 \quad |\nabla(v_j-B_j)|\le\frac{C}{\beta_jd_j},
\end{equation}
where
\[
 \cl v_j(r)=\frac1{2\pi}\int_0^{2\pi}v_j(re^{i\alpha})\dd\alpha
 =\frac1{2\pi r}\int_{\partial B_r}v_j\dd\sigma\quad(r>0),
 \qquad \cl v_j(0)=v_j(0).
\]
Set \(t_j(r)=\log(1+r^2/(4\mu_j^2))\). The radial solution has the expansion
\begin{equation}\label{eq:radialexpansion}
 B_j(r)=\beta_j-\frac{t_j(r)}{\beta_j}-\frac{t_j(r)}{\beta_j^3}
 +O(\beta_j^{-2})\qquad(0\le t_j(r)\le\beta_j^2).
\end{equation}
\end{tool}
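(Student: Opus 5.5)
Theorem~\ref{tool:DTlocal} is a restatement of \cite[Proposition~3.1(a), (b) and (3.9)]{DT}. My plan is therefore mainly a translation of notation plus a check of hypotheses, with a sketch of the mechanism behind the estimates so that the uniformity claims can be verified.

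\textbf{Step 1: matching the setting of \cite{DT}.} Druet--Thizy work with \(-\Delta u=\lambda f u e^{u^2}\), and I would set \(f\equiv1\) and \(\lambda=\theta_j\). Their scale is \(\mu_j^{-2}=\theta_j\beta_j^2e^{\beta_j^2}\), which is exactly the normalization making \(z\mapsto\beta_j(v_j(\mu_jz)-\beta_j)\) solve \(-\Delta U_j=e^{2U_j+U_j^2/\beta_j^2}(1+U_j/\beta_j^2)\). Its formal limit is \(-\Delta U=e^{2U}\), with solution \(U(z)=-\log(1+|z|^2/4)\) once \(U(0)=0\) and \(\nabla U(0)=0\) are imposed. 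Their local hypotheses are:
\begin{itemize}
\item rescaled \(C^2_{\mathrm{loc}}\) convergence;
\item \(d_j/\mu_j\to\infty\);
\item the weighted pointwise bound \(|x|^2\theta_jv_j^2e^{v_j^2}\le C\);
\item the gradient bound \(|x||\nabla v_j|\le C/\beta_j\).
\end{itemize}
These are precisely the assumptions listed above. I would check that their constants depend only on the constants in \eqref{eq:DTconditions} and on the bounds for \(\theta_j\) and \(d_j\), so the statement can be quoted verbatim.

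\textbf{Step 2: the radial expansion \eqref{eq:radialexpansion}.} This is an ODE computation for \(B_j\). I would write \(B_j=\beta_j-\tau_j/\beta_j\) and expand \(\tau_j=t_j+\beta_j^{-2}w_1+\dots\). The first correction \(w_1\) solves the radial linearized Liouville equation around \(U\) with source \(-(U^2+U)e^{2U}\)-type terms. Its explicit solution grows like \(t_j\) for large \(r\), which produces the \(-t_j/\beta_j^3\) term. The bounded remainder is \(O(\beta_j^{-2})\), uniformly in the range \(t_j\le\beta_j^2\), i.e.\ as long as \(B_j\) stays comparable to \(\beta_j\). This is \cite[(3.9)]{DT}.

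\textbf{Step 3: the comparison \eqref{eq:DTcomparison}.} Following \cite{DT}, I would set \(\varphi_j=v_j-B_j\) and split it into its radial average and its non-radial part.
\begin{itemize}
\item \textbf{Non-radial modes.} The linearized operator \(-\Delta-\theta_j(1+2B_j^2)e^{B_j^2}\) has potential bounded by \(C|x|^{-2}\), by the first condition in \eqref{eq:DTconditions}. The gradient condition then controls the angular oscillation by \(C/\beta_j\).
\item \textbf{Radial part.} The ODE for \(\cl v_j-B_j\) with zero initial data is integrated by a Gronwall argument in the variable \(t=\log r\). The convergence to \(U\) on compact sets supplies the \(o(\beta_j^{-1})\) smallness in the core region \(|x|\le R\mu_j\). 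The bound \(|x|^2\theta_jv_j^2e^{v_j^2}\le C\), combined with the decay of \(e^{2U}\), makes the coefficient integrable in \(\log r\) outside the core, giving \(\sup|\cl v_j-B_j|=o(\beta_j^{-1})\).
\item \textbf{Gradient estimate.} This follows from elliptic estimates on dyadic annuli, scaled to the outermost annulus of size \(d_j\).
\end{itemize}

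\textbf{Main obstacle.} The delicate point is the radial Gronwall step across the transition region where \(t_j(r)\) is of order \(\beta_j^2\). There the nonlinearity ceases to be small only through the structure \(e^{B_j^2-\beta_j^2}\), so the error must be tracked against the exponential decay given by \eqref{eq:radialexpansion}. Since this is carried out in full in \cite[Section~3]{DT}, the proof here consists of the hypothesis matching of Step~1 together with a citation.
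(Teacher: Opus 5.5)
Your proposal is correct and takes the same route as the paper, which gives no independent argument for this statement but quotes \cite[Proposition~3.1(a), (b) and (3.9)]{DT} in the constant-coefficient case: it matches the hypotheses (3.1)--(3.7) there to the assumptions listed here and verifies \eqref{eq:DTconditions} separately in Lemma~\ref{lem:budget}. Your Steps~2--3 are heuristic and not needed. For instance, dyadic elliptic estimates by themselves only give \(|\nabla(v_j-B_j)|\le C/(\beta_j|x|)\), not the uniform bound \(C/(\beta_jd_j)\), so it is the citation that carries the proof.
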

For constant coefficients, the local comparison is given in \cite[Proposition~3.1(a), (b)]{DT}. The radial solution and logarithmic variable are defined in (3.8) and (3.11), and the expansion is (3.9). The hypotheses are listed in (3.1)--(3.7). We verify \eqref{eq:DTconditions} in Lemma~\ref{lem:budget}; the weighted integral bounds needed in each application are given in Lemmas~\ref{lem:maxbubble}, \ref{lem:lowdisk}, and Proposition~\ref{prop:neighborhood}.

\begin{tool}[Chen--Li classification theorem \cite{CL}]\label{tool:chenli}
If \(V\in C^2(\R^2)\), \(-\Delta V=e^{2V}\), and \(\int_{\R^2}e^{2V}<\infty\), then
\[
 V(z)=\log\frac{2\lambda}{1+\lambda^2|z-z_0|^2}
 \quad(\lambda>0, z_0\in\R^2),
 \qquad \int_{\R^2}e^{2V}=4\pi.
\]
\end{tool}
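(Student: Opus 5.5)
This result is quoted from \cite{CL}, and I would reprove it along the lines of Chen--Li: potential representation, then a Pohozaev identity, then moving planes. Set \(\beta:=\int_{\R^2}e^{2V}<\infty\) and introduce the logarithmic potential
\[
 w(x):=\frac1{2\pi}\int_{\R^2}\log\frac{|x-y|}{1+|y|}\,e^{2V(y)}\dd y .
\]
The weight \(e^{2V}\) is integrable and \(\log(|x-y|/(1+|y|))\) grows at most logarithmically in \(y\) for fixed \(x\). Hence \(w\) is well defined, satisfies \(\Delta w=e^{2V}\), and obeys \(w(x)\le\frac{\beta}{2\pi}\log|x|+C\) for \(|x|\ge2\). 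Therefore \(h:=V+w\) is harmonic on \(\R^2\).

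\textbf{Step 1: upper bound on \(V\).} I would first show \(\sup_{\R^2}V<\infty\). On each unit ball \(B_1(x_0)\), split \(V=V_1+V_2\), where \(V_1\) solves \(-\Delta V_1=e^{2V}\) with zero boundary data and \(V_2\) is harmonic.
\begin{itemize}
\item Choose \(x_0\) with \(|x_0|\) large, so that \(\int_{B_2(x_0)}e^{2V}\) is small. The Brezis--Merle inequality then gives \(e^{2|V_1|}\in L^p(B_1(x_0))\) for some \(p>1\), with a bound independent of \(x_0\).
\item The mean value property bounds \(V_2^+\) through \(\int_{B_2(x_0)}V^+\le\int_{B_2(x_0)}e^{2V}\).
\item Elliptic estimates then give \(V\le C\) on \(B_{1/2}(x_0)\), uniformly in \(x_0\). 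This is the same local \(\eps\)-regularity mechanism used later in the paper.
\end{itemize}
With this bound, \(h\le C+\frac{\beta}{2\pi}\log|x|\) for \(|x|\ge2\). A harmonic function on \(\R^2\) with one-sided logarithmic growth is constant, so \(V=-w+c\). The representation formula then yields the asymptotics
\[
 V(x)=-\frac{\beta}{2\pi}\log|x|+O(1),\qquad |x\cdot\nabla V(x)+\tfrac{\beta}{2\pi}|=o(1)\quad(|x|\to\infty),
\]
the latter after differentiating under the integral.

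\textbf{Step 2: the total mass.} Multiply the equation by \(x\cdot\nabla V\) and integrate over \(B_R\). This is the Pohozaev identity, which in dimension two reads
\[
 \int_{B_R}e^{2V}=\frac R2\int_{\partial B_R}e^{2V}\dd\sigma+R\int_{\partial B_R}\Bigl(|\partial_nV|^2-\tfrac12|\nabla V|^2\Bigr)\dd\sigma .
\]
Since \(e^{2V}\) is integrable, the first boundary term tends to zero along a sequence \(R_k\to\infty\). By the asymptotics of Step 1, \(\partial_nV\approx-\beta/(2\pi R)\) and the tangential derivative is \(o(1/R)\). Passing to the limit gives \(\beta=\beta^2/(4\pi)\). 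Since \(\beta>0\), this forces \(\beta=4\pi\).

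\textbf{Step 3: symmetry and classification.} With \(V=-2\log|x|+O(1)\), I would apply the method of moving planes in every direction to \(V\), or equivalently to its Kelvin-type transform \(V(x/|x|^2)-2\log|x|\), which is regular at the origin. This shows that \(V\) is radially symmetric and decreasing about some point \(z_0\). The radial ODE \(-V''-V'/r=e^{2V}\) with \(V'(0)=0\) has the explicit solutions \(\log\frac{2\lambda}{1+\lambda^2r^2}\), and uniqueness for the initial value problem identifies \(V\).

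\textbf{Main obstacle.} I expect the hardest part to be the start of the moving-plane procedure at infinity. It requires sharper asymptotics, namely expansion of \(w\) to order \(O(1/|x|)\), to compare \(V\) with its reflection near infinity. If this step proves delicate, my first alternative would be to avoid moving planes altogether: use \(\beta=4\pi\) and the isoperimetric (Bol-type) inequality on the level sets of \(V\). Equality in that inequality forces the level sets to be concentric circles, and the radial ODE then concludes as above.
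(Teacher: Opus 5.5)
The paper gives no proof of this statement (it simply quotes it from \cite{CL}), and your outline is a sound reconstruction of that standard Chen--Li proof: logarithmic potential, \(\sup V<\infty\) via Brezis--Merle, Liouville for the harmonic function \(V+w\), the Pohozaev identity forcing \(\beta=4\pi\), then moving planes or the Alexandrov--Bol level-set argument, and uniqueness for the radial ODE. The one step you compress too far is the asymptotics: the exact bound \(V\ge c-\frac{\beta}{2\pi}\log|x|\) for \(|x|\ge1\), together with \(\int_{\R^2}e^{2V}<\infty\), first gives \(\beta>2\pi\); dominated convergence (with \(\sup V<\infty\) controlling the singularity) then gives \(w(x)/\log|x|\to\beta/(2\pi)\), hence \(e^{2V}\le C|x|^{-2-\delta}\), and only this decay makes the remainder \(O(1)\) and justifies the gradient asymptotics in the Pohozaev limit; note also that your Kelvin-transform variant already removes the start-at-infinity obstacle, since \(V(x/|x|^2)-2\log|x|\) inherits a full Taylor expansion at infinity from the smoothness of \(V\) at the origin.
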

\section{Conformal change of variables and the total mass estimate}\label{sec:mass}

\begin{revision}
\subsection{The weighted Pohozaev identity and total mass estimates}
Fix a conformal diffeomorphism \(\phi:\D\to\Om\), identifying planar coordinates with complex numbers. Set
    \begin{equation}\label{eq:v-a-h-g-G}
    	v=u\circ\phi,\quad a=|\phi'|^2=e^{2h},\quad
    	h=\log|\phi'|,\quad g(z)=1+z\cdot\nabla h(z),\quad G_* =\max_{\cl\D}g.
    \end{equation}
Here \(\phi'\) is the complex derivative, and \(z\cdot\nabla h\) is the Euclidean inner product with \(z\) viewed as a real two-dimensional vector. The function \(h\) is smooth and harmonic; since \(g(0)=1\), we have \(G_*\ge1\). The conformal map has Jacobian \(a=|\phi'|^2\), and \(\Delta v=a\,\bigl((\Delta u)\circ\phi\bigr)\). Thus \(v=0\) on \(\partial\D\), and the transformed equation and densities are
\[
 -\Delta v=\rho_\D:=\theta av e^{v^2},\qquad
 P_\D=\theta a(e^{v^2}-1),\qquad
 \int_\D\rho_\D=M.
\]
Thus \(\rho_\D\) and \(P_\D\) are the densities of \(\rho\dd x\) and \(P\dd x\) in disk coordinates. Dirichlet energy is invariant under this conformal change of variables.

We first state the Pohozaev identity for this weighted equation. We then expand the negative outward normal derivative \(-\partial_n v\) on the unit circle in a Fourier series and use Parseval's identity to express the boundary integral.

\begin{lemma}\label{lem:moment}
The pullback \(v\) satisfies the Pohozaev identity
\begin{equation}\label{eq:weighted-pohozaev}
 \begin{aligned}
 \frac12\int_{\partial\D}(\partial_n v)^2\dd\sigma
 &=\theta\int_\D\left(a+\frac12z\cdot\nabla a\right)(e^{v^2}-1)\dd z\\
 &=\int_\D gP_\D\dd z.
 \end{aligned}
\end{equation}
Expanding \(-\partial_n v\) in a Fourier series and applying Parseval's identity gives the equivalent form
\begin{equation}\label{eq:moment}
 M^2+2\sum_{n=1}^{\infty}\left|\int_\D z^n\rho_\D(z)\dd z\right|^2
 =4\pi\int_\D gP_\D\dd z.
\end{equation}
\end{lemma}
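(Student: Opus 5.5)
The plan is to test the weighted equation \(-\Delta v=\theta ave^{v^2}\) on \(\D\) against the Pohozaev multiplier \(z\cdot\nabla v\), and then to rewrite the boundary term using the Fourier expansion of \(-\partial_nv\).

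\emph{Step 1: the weighted Pohozaev identity.} Multiply \(-\Delta v\) by \(z\cdot\nabla v\) and integrate over \(\D\). On the left, the standard two-dimensional computation gives
\[
 \int_\D(-\Delta v)(z\cdot\nabla v)\dd z=-\frac12\int_{\partial\D}(\partial_nv)^2\dd\sigma .
\]
Here we use \(v=0\) on \(\partial\D\), so \(\nabla v=(\partial_nv)n\) there and \(z\cdot n=1\). The interior term \(\int_\D|\nabla v|^2\) cancels in dimension two, since \(\diver(z)/2-1=0\).

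On the right, \(\theta ave^{v^2}\nabla v=\tfrac12\theta a\nabla(e^{v^2}-1)\). Integrate this by parts; the boundary term vanishes because \(e^{v^2}-1=0\) on \(\partial\D\). This gives
\[
 -\frac{\theta}{2}\int_\D\diver(az)(e^{v^2}-1)\dd z
 =-\theta\int_\D\Bigl(a+\tfrac12z\cdot\nabla a\Bigr)(e^{v^2}-1)\dd z .
\]
Equating the two sides and changing sign yields the first equality in \eqref{eq:weighted-pohozaev}. For the second equality, note that \(\nabla a=2a\nabla h\). Hence \(a+\tfrac12z\cdot\nabla a=a(1+z\cdot\nabla h)=ag\), and therefore the integrand equals \(gP_\D\). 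Regularity up to the boundary is not an issue: \(v\in C^2(\cl\D)\) because \(u\in C^2(\cl\Om)\) and \(\phi\) is smooth up to \(\partial\D\) for smooth \(\Om\).

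\emph{Step 2: Fourier coefficients of \(-\partial_nv\).} Write \(f(e^{i\alpha})=-\partial_nv(e^{i\alpha})\), with coefficients \(c_n=\frac1{2\pi}\int_0^{2\pi}f e^{-in\alpha}\dd\alpha\). The key observation is that \(z^n\) is harmonic for \(n\ge0\), so Green's second identity applies. Since \(v=0\) on the boundary,
\[
 \int_\D z^n\rho_\D\dd z=\int_\D z^n(-\Delta v)\dd z=-\int_{\partial\D}z^n\partial_nv\dd\sigma=\int_0^{2\pi}e^{in\alpha}f\dd\alpha=2\pi\,\overline{c_n}\cdot\text{(up to conjugation conventions)}.
\]
It follows that \(|c_n|=\frac1{2\pi}\bigl|\int_\D z^n\rho_\D\bigr|\) for \(n\ge0\). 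In particular \(c_0=M/(2\pi)\). Because \(f\) is real, \(c_{-n}=\overline{c_n}\), so the negative modes contribute the same moduli as the positive ones.

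\emph{Step 3: Parseval.} Parseval's identity gives
\[
 \int_{\partial\D}f^2\dd\sigma=2\pi\sum_{n\in\mathbb Z}|c_n|^2
 =\frac1{2\pi}\Bigl(M^2+2\sum_{n\ge1}\Bigl|\int_\D z^n\rho_\D\Bigr|^2\Bigr).
\]
Substituting this into \(\tfrac12\int_{\partial\D}(\partial_nv)^2=\int_\D gP_\D\) and multiplying by \(4\pi\) gives \eqref{eq:moment}.

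The main point requiring care is the bookkeeping of conjugates and the factors of \(2\pi\) in Steps 2--3. Convergence of the series is not a concern: it is automatic from \(f\in L^2(\partial\D)\).
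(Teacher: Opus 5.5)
Your proof is correct and has the same structure as the paper's: a Pohozaev identity for the weighted equation on \(\D\), which you derive directly with the multiplier \(z\cdot\nabla v\) where the paper cites Battaglia's Theorem~2.2, followed by Parseval's identity for \(-\partial_n v\) on \(S^1\). The one minor difference is how you get the Fourier coefficients: you use Green's second identity against the harmonic functions \(z^n\), whereas the paper writes \(-\partial_n v\) through the Poisson kernel, expands the kernel, and applies Fubini and orthogonality. Both give \(\int_0^{2\pi}(-\partial_nv)(e^{i\alpha})e^{in\alpha}\dd\alpha=\int_\D z^n\rho_\D\dd z\), and your route is slightly more direct because it avoids the interchange argument.
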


\begin{proof}
Apply \cite[Theorem~2.2]{Battaglia2019} with \(G(z,s)=\frac{\theta}{2}a(z)(e^{s^2}-1)\). Then \(\partial_sG(z,v)=\rho_\D\), and \(G(z,v)=0\) on \(\partial\D\) because \(v=0\) there. Together with \(a+\frac12z\cdot\nabla a=ag\), this gives \eqref{eq:weighted-pohozaev}.

We next evaluate the boundary integral by Fourier expansion. Write \(b(t)=-\partial_n v(e^{it})\), where \(n\) is the outward unit normal to \(\partial\D\), and set \(c_n=\int_\D z^n\rho_\D(z)\dd z\).
The Green representation formula on the disk gives
\[
 b(t)=\frac1{2\pi}\int_\D K(z,t)\rho_\D(z)\dd z,
 \qquad K(z,t)=\frac{1-|z|^2}{|e^{it}-z|^2}.
\]
The Poisson kernel has the expansion
\[
 K(z,t)=1+\sum_{n\ge1}(z^ne^{-int}+\cl z^{\,n}e^{int}).
\]
Since \(K\ge0\) and \(\int_0^{2\pi}K(z,t)\dd t=2\pi\), we have
\(\int_\D\int_0^{2\pi}K\rho_\D\dd t\dd z=2\pi M<\infty\). Fubini's theorem and orthogonality yield
\[
 \int_0^{2\pi}b(t)\dd t=M,
 \qquad \int_0^{2\pi}b(t)e^{int}\dd t=c_n\quad(n\ge1).
\]
The function \(b\) is smooth and real-valued, so its positive and negative frequency coefficients are complex conjugates. Parseval's identity therefore gives
\[
 \int_0^{2\pi}b(t)^2\dd t
 =2\pi\left[\left|\frac M{2\pi}\right|^2
       +2\sum_{n\ge1}\left|\frac{c_n}{2\pi}\right|^2\right]
 =\frac1{2\pi}\left(M^2+2\sum_{n\ge1}|c_n|^2\right).
\]

Substituting Parseval's identity into \eqref{eq:weighted-pohozaev}, we obtain
\[
 \int_\D gP_\D\dd z
 =\frac12\int_0^{2\pi}b(t)^2\dd t
 =\frac1{4\pi}\left(M^2+2\sum_{n\ge1}|c_n|^2\right),
\]
which is \eqref{eq:moment}.
\end{proof}
\end{revision}

The preceding identity gives a uniform bound for \(M\) and a decay estimate as \(\theta\to0\).
\begin{lemma}\label{lem:mass}
Let \(\lambda_1(\Om)\) be the first Dirichlet eigenvalue of \(-\Delta\) on \(\Om\). There exists a constant \(C_\Om\), depending only on \(\Om\), such that
\begin{equation}\label{eq:massbounds}
 0<\theta<\lambda_1(\Om),\qquad M\le C_\Om,
 \qquad M\le\frac{C_\Om}{\sqrt{\log(1/\theta)}}
 \quad(0<\theta<e^{-2}).
\end{equation}
Consequently, if \(\theta_j\to0\), then \(M_j\to0\) and \(u_j\to0\) in \(L^q(\Om)\) for every \(1\le q<\infty\).
\end{lemma}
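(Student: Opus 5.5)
The plan is to read off all three bounds from the identity \eqref{eq:moment} in Lemma~\ref{lem:moment}, after first bounding \(\theta\) by an eigenfunction test.

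\textbf{Step 1: \(\theta<\lambda_1(\Om)\).} Let \(\varphi_1>0\) be the first Dirichlet eigenfunction. Testing \eqref{eq:pde} against \(\varphi_1\) and integrating by parts gives
\[
 \lambda_1\int_\Om u\varphi_1\dd x=\theta\int_\Om ue^{u^2}\varphi_1\dd x>\theta\int_\Om u\varphi_1\dd x .
\]
The inequality is strict because \(u>0\) in \(\Om\), so \(e^{u^2}>1\) there. Since \(\int_\Om u\varphi_1\dd x>0\), this gives \(\theta<\lambda_1(\Om)\).

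\textbf{Step 2: reduce to an estimate on \(\int_\Om P\).} Since \(\Om\) is smooth, the Kellogg--Warschawski theorem shows that \(\phi\) is smooth up to \(\partial\D\). Hence \(h\) is smooth on \(\cl\D\), and \(G_*<\infty\) depends only on \(\Om\) (for a fixed choice of \(\phi\)). In \eqref{eq:moment}, drop the nonnegative sum on the left. Then use \(P_\D\ge0\) and \(g\le G_*\), and change variables back to \(\Om\), to obtain
\[
 M^2\le4\pi G_*\int_\D P_\D\dd z=4\pi G_*\int_\Om P\dd x .
\]

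\textbf{Step 3: bound \(\int_\Om P\) by splitting at a level \(L\).} Fix \(L>0\). On \(\{u\ge L\}\), \eqref{eq:pressure-comparison} gives \(P\le u^{-1}\rho\le L^{-1}\rho\), so this region contributes at most \(M/L\). On \(\{u<L\}\), we use \(P\le\theta(e^{L^2}-1)\), so this region contributes at most \(\theta e^{L^2}|\Om|\). Combining with Step 2,
\[
 M^2\le aM+b,\qquad a=\frac{C_\Om}{L},\qquad b=C_\Om\theta e^{L^2}.
\]
This quadratic inequality gives \(M\le a+\sqrt b\).

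\textbf{Step 4: choose \(L\).} For the uniform bound, take \(L=1\) and use \(\theta<\lambda_1(\Om)\) from Step 1; then \(M\le C_\Om\). For the decay bound with \(0<\theta<e^{-2}\), take \(L^2=\frac12\log(1/\theta)\), which satisfies \(L\ge1\). Then \(\theta e^{L^2}=\theta^{1/2}\), so
\[
 M\le\frac{C_\Om}{\sqrt{\log(1/\theta)}}+C_\Om\theta^{1/4}.
\]
The second term is also bounded by \(C_\Om/\sqrt{\log(1/\theta)}\), since \(\theta^{1/4}\sqrt{\log(1/\theta)}\) is bounded on \((0,e^{-2})\). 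This proves \eqref{eq:massbounds}.

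The main point requiring care is this choice of \(L\). A cruder use of the inequality, for example inserting \(M\le C\) into \(aM\), would only give \(M\lesssim(\log(1/\theta))^{-1/4}\). Solving the quadratic inequality \(M^2\le aM+b\) directly gives the sharper rate.

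\textbf{Step 5: consequences.} If \(\theta_j\to0\), then \(M_j\to0\) by \eqref{eq:massbounds}. By the Green representation \(u_j=\int_\Om G(\cdot,y)\rho_j(y)\dd y\) and Minkowski's integral inequality,
\[
 \|u_j\|_{L^q(\Om)}\le\sup_{y\in\Om}\|G(\cdot,y)\|_{L^q(\Om)}\,M_j\le C_{q,\Om}M_j\to0,
\]
using the \(L^q\) bound on \(G\) from \eqref{eq:greenbounds}.
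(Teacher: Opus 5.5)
Your proposal is correct and follows essentially the same route as the paper: the eigenfunction test for $\theta<\lambda_1(\Om)$, the bound $M^2\le 4\pi G_*\int_\Om P\dd x$ from \eqref{eq:moment}, a split of $\int_\Om P$ at the level $\sqrt{\log(1/\theta)/2}$, and Minkowski's inequality for the $L^q$ decay. The differences are cosmetic. The paper obtains $M\le 4\pi G_*$ directly from $P\le\rho$, and it absorbs the term $C_0M/T$ by Young's inequality $C_0M/T\le\tfrac12M^2+C_0^2/(2T^2)$; this is equivalent to your solving $M^2\le aM+b$.
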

\begin{proof}
Testing the equation {with} a positive first Dirichlet eigenfunction \(\varphi_1\), we obtain
\[
 \theta\int_\Om ue^{u^2}\varphi_1
 =\lambda_1(\Om)\int_\Om u\varphi_1
 >\theta\int_\Om u\varphi_1,
\]
Thus \(\theta<\lambda_1(\Om)\). By \eqref{eq:pressure-comparison},
\begin{equation}\label{eq:int_Omega-P-dx}
 \int_\Om P\dd x\le\int_\Om\rho\dd x=M.
\end{equation}
Set \(C_0=4\pi G_*\). The integral identity \eqref{eq:moment} gives
\[
 M^2\le4\pi\int_\D gP_\D
 \le C_0\int_\Om P\le C_0M,
\]
and therefore \(M\le C_0\).

For any \(T>1\), use \(P\le\theta e^{T^2}\) on \(\{u\le T\}\) and \eqref{eq:pressure-comparison} on \(\{u>T\}\) to obtain
\[
 M^2\le C_0\left(|\Om|\theta e^{T^2}+\frac MT\right)
 \le C_0|\Om|\theta e^{T^2}+\frac12M^2+\frac{C_0^2}{2T^2}.
\]
For \(0<\theta<e^{-2}\), choose \(T=\sqrt{\log(1/\theta)/2}\). Then
\[
 M^2\le\frac{2C_0^2}{\log(1/\theta)}+2C_0|\Om|\sqrt\theta
 \le\frac{C_\Om}{\log(1/\theta)}.
\]
This proves \eqref{eq:massbounds}. If \(\theta_j\to0\), then \(M_j\to0\). For every finite \(q\), Minkowski's inequality and \eqref{eq:greenbounds} give
\[
 \|u_j\|_{L^q(\Om)}
 \le\int_\Om\|G(\cdot,y)\|_{L^q(\Om)}\rho_j(y)\dd y
 \le C_{q,\Om}M_j\to0.
\]
This completes the proof.
\end{proof}

\subsection{Estimates in a boundary strip}
\begin{lemma}\label{lem:strip}
Let \(u_j\) be positive solutions of \eqref{eq:pde}, with \(\nu_j,\Pi_j\) defined by \eqref{eq:normalizations}, and write
\(d(x)=\dist(x,\partial\Om)\). There exist constants \(\delta_0>0\) and \(C_\Om>0\), depending only on \(\Om\), such that, for every \(j\) and \(0<\eps<\delta_0\),
\begin{equation}\label{eq:strip}
 \nu_j\{d<\eps\}\le C_\Om\eps,
 \qquad \Pi_j\{d<\eps\}\le C_\Om\eps\Pi_j(\Om).
\end{equation}
\end{lemma}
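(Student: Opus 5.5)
We will use the boundary monotonicity of Theorem~\ref{tool:boundary}: in normal coordinates \(x=\Psi(q,t)\), the map \(t\mapsto u_j(\Psi(q,t))\) is strictly increasing on \([0,\delta_0]\). The densities \(\rho_j=\theta_ju_je^{u_j^2}\) and \(P_j=\theta_j(e^{u_j^2}-1)\) are increasing functions of \(u_j\), so along each normal segment they are nondecreasing in \(t\). Therefore, for \(0<\eps<\delta_0/2\), the contribution of \(\{t<\eps\}\) to the segment integral is at most the average over \(t\in[0,\delta_0]\) times \(\eps\). More precisely, for a nondecreasing nonnegative \(f\) on \([0,\delta_0]\),
\[
 \int_0^\eps f\dd t\le\frac{\eps}{\delta_0}\int_0^{\delta_0}f\dd t,
\]
since the average of a monotone function over an initial segment is at most its average over the whole interval.

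Next we integrate over \(q\in\partial\Om\). The area element in the coordinates \(\Psi\) is \(J(q,t)\dd\sigma(q)\dd t\) with \(J=1-t\kappa(q)\), where \(\kappa\) is the curvature. After shrinking \(\delta_0\) if necessary, \(J\) is bounded above and below by positive constants depending only on \(\Om\). Applying the one-dimensional inequality to \(f(t)=\rho_j(\Psi(q,t))\) and absorbing the Jacobian bounds, we obtain
\[
 \int_{\{d<\eps\}}\rho_j\dd x\le C_\Om\frac{\eps}{\delta_0}\int_{\{d<\delta_0\}}\rho_j\dd x\le C_\Om\eps M_j.
\]
Dividing by \(M_j\) gives \(\nu_j\{d<\eps\}\le C_\Om\eps\). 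The identical argument with \(P_j\) in place of \(\rho_j\), divided by \(M_j^2\), gives \(\Pi_j\{d<\eps\}\le C_\Om\eps\,\Pi_j(\Om)\). For \(\delta_0/2\le\eps<\delta_0\), both bounds are trivial after enlarging \(C_\Om\), since the measures have total masses \(1\) and \(\Pi_j(\Om)\).

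The only point needing care is the uniformity of the constants. The constant \(\delta_0\) from Theorem~\ref{tool:boundary} is independent of \(\theta\), as noted after that statement. The Jacobian bounds depend only on the geometry of \(\partial\Om\). The monotonicity of \(\rho\) and \(P\) in \(u\) holds because \(\theta>0\). I expect no real obstacle beyond this bookkeeping: checking that the strip is exactly parametrized by \(\Psi\) on \([0,\delta_0]\), so that the change of variables covers \(\{d<\delta_0\}\) bijectively, which Theorem~\ref{tool:boundary} asserts.
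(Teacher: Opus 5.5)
Your proposal is correct and follows essentially the paper's argument. Both proofs use the monotonicity of \(\rho_j\) and \(P_j\) along inward normals (Theorem~\ref{tool:boundary}), two-sided Jacobian bounds in the normal coordinates, and integration over \(q\in\partial\Om\). The only difference is in the one-dimensional step. The paper compares \(\int_0^\eps\) and \(\int_\eps^{\delta_0}\) with the value at \(t=\eps\) and absorbs, obtaining \(I_\eps\le 3\eps T/\delta_0\). You use the averaging inequality for monotone functions instead, which is equivalent. That inequality holds for every \(\eps\le\delta_0\), so your case split at \(\delta_0/2\) is unnecessary.
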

\begin{proof}
In the normal coordinates of Theorem~\ref{tool:boundary}, decrease \(\delta_0\) so that the area element satisfies
\[
 \dd x=J(q,t)\dd\sigma(q)\dd t,
 \qquad \tfrac12\le J(q,t)\le\tfrac32\quad(0\le t\le\delta_0).
\]
Let \(F\ge0\) be nondecreasing along inward normals, and set \(S_\eps=\{d<\eps\}\). Then
\[
 \int_0^\eps F(\Psi(q,t))J(q,t)\dd t
 \le\tfrac32\eps F(\Psi(q,\eps)),\qquad
 \int_\eps^{\delta_0}F(\Psi(q,t))J(q,t)\dd t
 \ge\tfrac12(\delta_0-\eps)F(\Psi(q,\eps)).
\]
Integrating in \(q\) and writing \(I_\eps=\int_{S_\eps}F\), \(T=\int_\Om F\), we obtain
\[
 (\delta_0-\eps)I_\eps\le3\eps(T-I_\eps),
 \qquad I_\eps\le\frac{3\eps}{\delta_0+2\eps}T\le\frac{3\eps}{\delta_0}T.
\]
Boundary monotonicity shows that \(\rho_j\) and \(P_j\), defined in \eqref{eq:notation}, satisfy these conditions. Applying the estimate to each and dividing by \(M_j\) and \(M_j^2\), respectively, proves \eqref{eq:strip}.
\end{proof}

\subsection{Exponential integrability and truncation}
\begin{lemma}\label{lem:jensen}
For every \(0<\alpha<4\pi\),
\begin{equation}\label{eq:globaljensen}
 \int_\Om e^{\alpha w_j}\dd x\le C_{\alpha,\Om},
\end{equation}
where \(w_j\) is defined in \eqref{eq:normalizations}. Suppose that \(\nu_j\weak\nu\), \(B_{2r}(x_0)\Subset\Om\), and \(\bar p>\nu(\cl B_{2r}(x_0))\). For fixed \(0<\alpha<4\pi\), we then have
\begin{equation}\label{eq:localjensen}
 \int_{B_r(x_0)}e^{(\alpha/\bar p)w_j}\dd x\le C_{r,\bar p,\alpha,\Omega}
\end{equation}
for all sufficiently large \(j\). If \(0<L<\alpha/\bar p\), then
\begin{equation}\label{eq:lowheight}
 \nu_j(B_r(x_0)\cap\{s_j\le L\})+
 \Pi_j(B_r(x_0)\cap\{s_j\le L\})\le C_{r,L,\bar p,\alpha,\Omega}\theta_j.
\end{equation}
\end{lemma}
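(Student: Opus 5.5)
Since \(w_j=G\nu_j\) with \(\nu_j\) a probability measure, the natural route to \eqref{eq:globaljensen} is Jensen's inequality applied to the kernel bound in \eqref{eq:greenbounds}. For each \(x\),
\[
 \alpha w_j(x)=\int_\Om \alpha G(x,y)\dd\nu_j(y)\le\int_\Om\frac{\alpha}{2\pi}\log\frac{L_0}{|x-y|}\dd\nu_j(y),
\]
and convexity of the exponential gives
\[
 e^{\alpha w_j(x)}\le\int_\Om\Bigl(\frac{L_0}{|x-y|}\Bigr)^{\alpha/2\pi}\dd\nu_j(y).
\]
Integrating in \(x\) and using Fubini, we obtain \(\int_\Om e^{\alpha w_j}\le\sup_y\int_\Om(L_0/|x-y|)^{\alpha/2\pi}\dd x\). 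This is finite because \(\alpha/2\pi<2\), and it is independent of \(j\).

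For \eqref{eq:localjensen}, split \(\nu_j=\nu_j|_{B_{2r}(x_0)}+\nu_j|_{\Om\setminus B_{2r}(x_0)}\). For \(x\in B_r(x_0)\), the far part contributes at most \(\frac1{2\pi}\log(L_0/r)\) to \(w_j(x)\), a bounded constant. For the near part, pass to a closed ball \(\cl B_{2r}\) and use weak convergence (upper semicontinuity on closed sets): \(\limsup\nu_j(\cl B_{2r}(x_0))\le\nu(\cl B_{2r}(x_0))<\bar p\). Hence \(m_j:=\nu_j(B_{2r}(x_0))<\bar p\) for large \(j\). Now apply Jensen to the probability measure \(\nu_j|_{B_{2r}}/m_j\) with exponent \((\alpha/\bar p)m_j\le\alpha\):
\[
 \int_{B_r}\exp\Bigl(\tfrac{\alpha}{\bar p}\int_{B_{2r}}G\dd\nu_j\Bigr)\dd x\le\sup_y\int_{B_r}\Bigl(\frac{L_0}{|x-y|}\Bigr)^{\alpha m_j/(2\pi\bar p)}\dd x\le C.
\]
Multiplying by the bounded factor \(e^{(\alpha/\bar p)C_r}\) from the far part gives \eqref{eq:localjensen}. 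If \(m_j=0\), the claim is trivial.

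For \eqref{eq:lowheight}, the point is that on \(\{s_j\le L\}\) we have \(u_j\le L/M_j\) and \(w_j\le L/M_j^2\). The density of \(\nu_j\) there is \(\theta_ju_je^{u_j^2}/M_j\le\theta_jLe^{u_j^2}/M_j^2\). The step requiring care is controlling \(e^{u_j^2}/M_j^2\) by \(e^{(\alpha/\bar p)w_j}\). Write \(u_j^2=M_j^2w_j^2=s_jw_j\le Lw_j\), so \(e^{u_j^2}\le e^{Lw_j}\). Set \(\alpha'=L\bar p<\alpha\), so that \(e^{Lw_j}=e^{(\alpha'/\bar p)w_j}\).

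The remaining difficulty is the factor \(M_j^{-2}\), which may blow up. Use \(u_j e^{u_j^2}/M_j=w_je^{Lw_j}\) instead: the \(\nu_j\) density is \(\theta_jw_je^{u_j^2}\le\theta_jw_je^{Lw_j}\). Since \(L<\alpha/\bar p\), we have \(w_je^{Lw_j}\le C_\epsilon e^{(\alpha/\bar p)w_j}\), and integrating over \(B_r\) with \eqref{eq:localjensen} gives \(\le C\theta_j\). For \(\Pi_j\), on \(\{s_j\le L\}\) the ratio in \eqref{eq:ratios} is at most \(u_j^2/s_j=w_j\), since \(1-e^{-t^2}\le t^2\) and \(u_j^2/s_j=u_j/M_j=w_j\). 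So the \(\Pi_j\) density is at most \(\theta_jw_j^2e^{Lw_j}\), which is again absorbed by \(e^{(\alpha/\bar p)w_j}\).

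The main care points are the strictness \(L<\alpha/\bar p\), which absorbs the polynomial factors, and the use of a closed ball in the weak-convergence step.
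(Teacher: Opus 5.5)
Your proposal is correct and follows the paper's proof essentially step by step: Jensen's inequality against the probability measure $\nu_j$ with the logarithmic kernel bound gives \eqref{eq:globaljensen}; the near/far splitting of $w_j$ together with the closed-ball weak-convergence inequality gives \eqref{eq:localjensen}; and the density bounds $\theta_j w_j e^{Lw_j}$ and $\theta_j w_j^2e^{Lw_j}$ on $\{s_j\le L\}$, absorbed into $e^{(\alpha/\bar p)w_j}$, give \eqref{eq:lowheight}. The only cosmetic difference is in the local step: you renormalize $\nu_j|_{B_{2r}(x_0)}$ by $m_j$ and lower the exponent to $\alpha m_j/\bar p$, which works because $(L_0/|x-y|)^s$ is increasing in $s$ when $L_0>\operatorname{diam}\Om$, whereas the paper pads the subprobability measure $\nu_j|_{B_{2r}(x_0)}/\bar p$ with an auxiliary point of mass $1-m_j/\bar p$; both give the same uniform bound.
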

\begin{proof}
Since \(w_j=G\nu_j\) and \(\nu_j(\Om)=1\), Jensen's inequality and \eqref{eq:greenbounds} give
\[
 e^{\alpha w_j(x)}
 \le\int_\Om e^{\alpha G(x,y)}\dd\nu_j(y)
 \le\int_\Om\left(\frac{L_0}{|x-y|}\right)^{\alpha/(2\pi)}\dd\nu_j(y).
\]
The singularity can be handled by first truncating \(G\) and then applying monotone convergence. Since \(\alpha/(2\pi)<2\), integration in polar coordinates gives
\[
 K_{\alpha,\Om}:=\sup_{y\in\Om}
 \int_\Om\left(\frac{L_0}{|x-y|}\right)^{\alpha/(2\pi)}\dd x<\infty.
\]
Integrating in \(x\) and applying Tonelli's theorem proves \eqref{eq:globaljensen}.

For the local estimate, abbreviate \(B_r=B_r(x_0)\) and set \(m_j=\nu_j(B_{2r})\). The weak convergence inequality for closed sets gives
\[
 \limsup_jm_j\le\nu(\cl B_{2r})<\bar p.
\]
Split the integral defining \(w_j=G\nu_j\) as \(w_j=a_j+b_j\), where
\[
 a_j(x)=\int_{B_{2r}}G(x,y)\dd\nu_j(y),\qquad
 b_j(x)=\int_{\Om\setminus B_{2r}}G(x,y)\dd\nu_j(y).
\]
For \(x\in B_r\), the second term satisfies
\(0\le b_j(x)\le(2\pi)^{-1}\log(L_0/r)\). For all sufficiently large \(j\), the measure
\(\nu_j|_{B_{2r}}/\bar p\) has mass less than one. {Add a point to the domain of integration}, assign it mass \(1-m_j/\bar p\), and define the kernel to be zero there. Jensen's inequality for the resulting probability measure yields
\[
 e^{(\alpha/\bar p)a_j(x)}
 \le1-\frac{m_j}{\bar p}
 +\frac1{\bar p}\int_{B_{2r}}e^{\alpha G(x,y)}\dd\nu_j(y).
\]
The integral of the right-hand side over \(B_r\) is at most \(|B_r|+K_{\alpha,\Om}\). Together with the bound for \(b_j\), this proves \eqref{eq:localjensen}.

We now estimate the densities on \(\{s_j\le L\}\). Suppose that a measurable set \(D\subset\Om\) satisfies
\(\int_D e^{\Lambda w_j}\le C\) for a fixed \(\Lambda>0\). On \(\{s_j\le L\}\), the identities \(u_j^2=s_jw_j\) and \(e^t-1\le te^t\) give
\[
 \frac{\dd\nu_j}{\dd x}=\theta_jw_je^{u_j^2}\le\theta_jw_je^{Lw_j},
 \qquad
 \frac{\dd\Pi_j}{\dd x}\le\theta_jw_j^2e^{u_j^2}\le\theta_jw_j^2e^{Lw_j}.
\]
If \(q\ge1\) and \(qL<\Lambda\), then
\((t^q+t^{2q})e^{qLt}\le C_{q,L,\Lambda}e^{\Lambda t}\), hence
\begin{equation}\label{eq:lowheight-Lq}
 \left\|\ind_{\{s_j\le L\}}\frac{\dd\nu_j}{\dd x}\right\|_{L^q(D)}
 +\left\|\ind_{\{s_j\le L\}}\frac{\dd\Pi_j}{\dd x}\right\|_{L^q(D)}
 \le C_{D,q,L,\Lambda}\theta_j.
\end{equation}
Taking \(D=B_r(x_0)\), \(\Lambda=\alpha/\bar p\), and \(q=1\) proves \eqref{eq:lowheight}.
\end{proof}

\begin{remark}\label{rem:global-lowheight}
For \(D=\Om\), \eqref{eq:globaljensen} shows that \eqref{eq:lowheight-Lq} holds whenever \(qL<4\pi\). In particular, for \(0<L<4\pi\),
\[
 \nu_j(\{s_j\le L\})+\Pi_j(\{s_j\le L\})\le C_{L,\Om}\theta_j.
\]
In either the global or local setting above, combining \eqref{eq:lowheight-Lq} with \(q=1\) and \eqref{eq:pressure-comparison} gives, for every bounded nonnegative function \(\psi\),
\begin{equation}\label{eq:pressure-split}
 \int_D\psi\dd\Pi_j
 \le C_{D,L}\|\psi\|_\infty\theta_j
   +\frac1L\int_D\psi\dd\nu_j.
\end{equation}
The global estimate requires \(L<4\pi\), and the local estimate requires \(L<4\pi/\bar p\). The constants also depend on the gap between the chosen exponent and its critical value.
\end{remark}

\section{Local radial comparison and estimates at a maximum}\label{sec:local}
This section bounds the first factor \(\sqrt{\theta_j}\gamma_j\) in the energy estimate and establishes the logarithmic energy bound used later. We first derive the pointwise and gradient bounds required for radial comparison from a local weighted integral condition. Combining the radial expansion with the Green representation formula then gives the asymptotic relations in Lemma~\ref{lem:matching}. We verify the hypotheses of these local results at a global maximum to prove Proposition~\ref{prop:maxbound}. The same results will be applied to the selected local maxima in Sections~\ref{sec:center} and \ref{sec:absorb}, where their hypotheses will be checked.

\subsection{Local compactness and the hypotheses of radial comparison}
Let \(\zeta_j\) be \(C^2\) functions on domains exhausting \(\R^2\), with \(\zeta_j(0)=0\) and \(\zeta_j\le o(1)\) uniformly on each fixed ball. Fix \(R>0\), and suppose that \(0\le-\Delta\zeta_j\le C_R\) on \(B_{2R}\), with \(C_R\) independent of \(j\). Choose \(\eps_j\downarrow0\), possibly depending on \(R\), such that \(\zeta_j\le\eps_j\) on that ball, and set \(v_j=\eps_j-\zeta_j\ge0\). Its circular mean satisfies
\[
 \cl v_j(r)=v_j(0)+\frac1{2\pi}\int_0^r\frac1t\int_{B_t}\Delta v_j\dd x\dd t
 \le\eps_j+\frac{C_Rr^2}{4}.
\]
Multiplying by \(2\pi r\) and integrating from \(0\) to \(2R\), we obtain
\[
 \int_{B_{2R}}v_j\dd x\le C_R(1+\eps_j).
\]
Since \(\Delta v_j=-\Delta\zeta_j\ge0\), the function \(v_j\) is nonnegative and subharmonic. For each \(z\in B_R\),
the {mean-value inequality} on \(B_R(z)\subset B_{2R}\) gives
\[
 0\le v_j(z)\le\frac1{\pi R^2}\int_{B_R(z)}v_j
 \le C_R(1+\eps_j).
\]
Since \(\zeta_j=\eps_j-v_j\), this gives uniform upper and lower bounds for \(\zeta_j\). Interior estimates on slightly smaller balls then yield, for every finite \(q>1\),
the bound
\[
 \|\zeta_j\|_{W^{2,q}(B_{R/2})}
 \le C_{R,q}\bigl(\|\zeta_j\|_{L^q(B_R)}+\|\Delta\zeta_j\|_{L^q(B_R)}\bigr)\le C_{R,q}.
\]
In the rescalings below, the right-hand sides are smooth functions of \(\zeta_j\), with derivatives uniformly bounded on bounded ranges. Sobolev embedding and Schauder estimates therefore give subsequential convergence in \(C^2_{\mathrm{loc}}\).

\begin{lemma}\label{lem:budget}
Let \(x_j\in K\Subset\Om\), \(\beta_j=u_j(x_j)\to\infty\), and \(\nabla u_j(x_j)=0\). Assume that
\begin{equation}\label{eq:localassumptions}
 \log\theta_j=o(\beta_j^2),\qquad
 \beta_j(u_j(x_j+\mu_j z)-\beta_j)\to U(z)
 \quad\hbox{in }C^2_{\mathrm{loc}}(\R^2),
\end{equation}
where \(U\) and \(\mu_j\) are defined in \eqref{eq:Uz-mu_j}.
Suppose that \(\sup_jM_j<\infty\), \(\sup_j\theta_j<\infty\), and
\begin{equation}\label{eq:budget}
 \int_{B_{L_j}(x_j)}u_j\rho_j\dd x\longrightarrow4\pi,
 \qquad L_j=e^{-\beta_j^2/32}.
\end{equation}
Then Theorem~\ref{tool:DTlocal} applies with comparison radius
\(d_j=e^{-\beta_j^2/16}\).
\end{lemma}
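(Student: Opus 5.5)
We set \(v_j(y)=u_j(x_j+y)\) on \(B_{d_j}(0)\), with \(d_j=e^{-\beta_j^2/16}\). Since \(x_j\in K\Subset\Om\), the ball lies in \(\Om\) for large \(j\). The hypotheses \(v_j(0)=\beta_j\to\infty\), \(\nabla v_j(0)=0\), and boundedness of \(\theta_j,d_j\) are immediate. From \(\log\theta_j=o(\beta_j^2)\) we get \(\mu_j^2=\theta_j^{-1}\beta_j^{-2}e^{-\beta_j^2}=e^{-\beta_j^2(1+o(1))}\). Hence \(\mu_j\to0\) and \(d_j/\mu_j=e^{\beta_j^2(1/2-1/16+o(1))}\to\infty\), and the \(C^2_{\mathrm{loc}}\) convergence is assumed. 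What remains is the two bounds in \eqref{eq:DTconditions} on \(B_{d_j}\). Note that \(d_j=L_j^2\ll L_j\), so every ball \(B_{|x|}(x_j)\) with \(|x|\le d_j\) lies well inside \(B_{L_j}(x_j)\), where the budget \eqref{eq:budget} is available.

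\emph{Step 1: the energy outside the core is small.} By the \(C^2_{\mathrm{loc}}\) convergence, for each fixed \(R\),
\[
 \int_{B_{R\mu_j}(x_j)}u_j\rho_j
 =\int_{B_R}\theta_j\mu_j^2\beta_j^2(1+o(1))e^{2U+o(1)}\dd z\to\int_{B_R}e^{2U},
\]
since \(\theta_j\mu_j^2\beta_j^2e^{\beta_j^2}=1\) and \(u_j^2-\beta_j^2=2U+o(1)\). Because \(\int_{\R^2}e^{2U}=4\pi\), combining this with \eqref{eq:budget} gives
\[
 \lim_{R\to\infty}\limsup_j\int_{B_{L_j}(x_j)\setminus B_{R\mu_j}(x_j)}u_j\rho_j=0.
\]

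\emph{Step 2: gradient bound.} Write \(u_j=G\rho_j\) and split \(\nabla u_j(x)=\int G_x\rho_j\) into three pieces: the part from \(\Om\setminus B_{L_j}(x_j)\), the part from \(B_{L_j}\) near \(x\), and the part from \(B_{L_j}\) away from \(x\).

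For the far part, \eqref{eq:greenbounds} gives \(|\nabla_xG|\le C/L_j\). This piece is bounded by \(CM_j/L_j\), and \(|x|CM_j/L_j\le Cd_j/L_j=Ce^{-\beta_j^2/32}\ll\beta_j^{-1}\).

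For the inner part, use \(\nabla_xG=-\frac{x-y}{2\pi|x-y|^2}+O(1)\). Since \(\nabla u_j(x_j)=0\), the radial leading contribution of the core is harmless. The essential estimate is
\[
 |x|\int_{B_{L_j}}\frac{\rho_j(y)}{|x-y|}\dd y\le\frac{C}{\beta_j}.
\]
Inside the ball we have \(u_j\ge\beta_j/2\), which should follow from a first crude pass of the same Green estimate. Hence \(\rho_j\le2u_j\rho_j/\beta_j\), and the mass of \(\rho_j\) in the region is at most \(C/\beta_j\).

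For \(y\) with \(|y-x_j|\le|x|/2\) or \(|y-x_j|\ge2|x|\), we have \(|x|/|x-y|\le2\), which gives \(C/\beta_j\). The dangerous part is the annulus \(|x-y|<|x|/2\), which requires a pointwise bound on \(\rho_j\) near \(x\). This is exactly the first condition in \eqref{eq:DTconditions}. So the two conditions should be proved together.

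\emph{Step 3: the pointwise bound.} The quantity \(|x|^2\theta_jv_j^2e^{v_j^2}\) is bounded on \(B_{R\mu_j}\) by the convergence, since there it equals \(|z|^2e^{2U+o(1)}\). Outside, I would argue by contradiction with a blow-up at a point \(y_j\) maximizing \(|x|^2\theta_ju_j^2e^{u_j^2}\). If this maximum tends to infinity, rescaling around \(y_j\) at the scale where the nonlinearity is \(O(1)\) produces, via the local compactness paragraph preceding the lemma and Theorem~\ref{tool:chenli}, a second bubble. That bubble carries \(u_j\rho_j\)-energy at least about \(4\pi\) in \(B_{L_j}(x_j)\setminus B_{R\mu_j}(x_j)\), contradicting Step~1.

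An alternative route is to use the gradient bound and the mean value estimate to show \(u_j\le\beta_j-\frac{t_j}{\beta_j}+C/\beta_j\) radially. This gives
\[
 |x|^2\theta_ju_j^2e^{u_j^2}\lesssim|x|^2\mu_j^{-2}e^{-2t_j+o(1)}\le C.
\]
I expect this step, the control of the nonlinearity on the intermediate annulus \(R\mu_j\le|x|\le d_j\) and closing the bootstrap with the gradient bound, to be the main obstacle. I would attempt the Step~3 blow-up first, because it uses only the energy budget from Step~1.
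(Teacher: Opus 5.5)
Your outline follows the paper's proof. It has the same four ingredients:
\begin{itemize}
\item the scale relations from \(\log\theta_j=o(\beta_j^2)\);
\item a lower bound \(u_j\ge c\beta_j\) on \(B_{L_j}(x_j)\), obtained by keeping only the core \(B_{T\mu_j}(x_j)\) in the Green formula, which together with \eqref{eq:budget} gives \(\int_{B_{L_j}(x_j)}\rho_j\le C/\beta_j\);
\item a weighted pointwise bound, proved by a blow-up in which a second bubble disjoint from the core would exceed the budget;
\item the three-way splitting of the Green formula for \(\nabla u_j\), using \(\rho_j\le C/(\beta_jr^2)\) on \(B_{r/2}(z)\).
\end{itemize}
Two points need repair, however.

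\textbf{The lower bound.} The claim \(u_j\ge\beta_j/2\) on \(B_{L_j}(x_j)\) is false. For \(r\ge T\mu_j\), the core alone gives \(Q_j(r)=\int_{B_r(x_j)}\rho_j\ge(m_T+o(1))/\beta_j\), with \(m_T=\int_{B_T}e^{2U}\) as close to \(4\pi\) as desired. Also \(\log(L_j/\mu_j)=(\frac{15}{32}+o(1))\beta_j^2\). Integrating \(-2\pi r m_j'=Q_j\) therefore shows that the circular mean of \(u_j\) at radius \(L_j\) is at most \((\frac1{16}+o(1))\beta_j\). Your Green argument gives exactly this order: \(G\ge\frac{\beta_j^2}{64\pi}-C\) against a core mass of about \(4\pi/\beta_j\). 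Any fixed \(c>0\) suffices, so only the constant changes.

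\textbf{The blow-up weight.} This is the more substantive issue. Maximizing \(|x-x_j|^2V_j\), with \(V_j=u_j\rho_j\), over a ball does not give the doubling property you need. The maximizer \(y_j\) may lie within \(O(\widehat\mu_j)\) of the outer boundary. On the part of the rescaled ball outside that boundary you then have no inequality \(V_j(y_j+\widehat\mu_jz)\le(1+o(1))V_j(y_j)\), hence no upper bound \(\zeta_j\le o(1)\) and no compactness. The paper instead maximizes \(D_j^2V_j\), with \(D_j(z)=\min\{|z-x_j|,L_j-|z-x_j|\}\), over \(\cl B_{L_j}(x_j)\). This does three things:
\begin{itemize}
\item it keeps the rescaled balls inside \(B_{L_j}(x_j)\), where the budget is available;
\item it gives \(|y_j-x_j|/\widehat\mu_j\to\infty\) automatically;
\item it yields the bound on \(B_{L_j/2}(x_j)\supset B_{3d_j/2}(x_j)\).
\end{itemize}
Your gradient split needs this larger ball, since \(B_{|x|/2}(x)\) leaves \(B_{d_j}\) when \(|x|\) is close to \(d_j\).

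With this weight in place, your Step~1 formulation is slightly more economical than the paper's count \(2m_T>4\pi\). Any positive energy in the second bubble already contradicts Step~1, so Chen--Li is not needed there. Your alternative radial route presupposes the bounds it is meant to prove and should be dropped.
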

\begin{proof}
The relation \(\log\theta_j=o(\beta_j^2)\) and the definition of \(\mu_j\) give
\(\log(1/\mu_j)=\beta_j^2/2+o(\beta_j^2)\). Hence \(\mu_j/d_j\to0\) and \(d_j/L_j\to0\). It remains to prove the two bounds in \eqref{eq:DTconditions}.

Fix \(T>0\), and set \(m_T=\int_{B_T}e^{2U}>0\). Changing variables by \(x=x_j+\mu_jz\) and using the assumed \(C^2_{\mathrm{loc}}\) convergence, we obtain
\(\beta_j\int_{B_{T\mu_j}(x_j)}\rho_j\to m_T\).
If \(z\in B_{L_j}(x_j)\) and \(y\in B_{T\mu_j}(x_j)\), then \(|z-y|\le2L_j\) for all sufficiently large \(j\). By \eqref{eq:green},
\[
 G(z,y)\ge\frac1{2\pi}\log\frac1{2L_j}-C_K
 =\frac{\beta_j^2}{64\pi}-O_K(1).
\]
Since \(G\ge0\) and \(\rho_j\ge0\), retaining only the integral over \(B_{T\mu_j}(x_j)\) in the Green representation formula gives
\[
 u_j(z)\ge\left(\frac{\beta_j^2}{64\pi}-C\right)
 \frac{m_T+o(1)}{\beta_j}\ge c\beta_j.
\]
Using \eqref{eq:budget}, we obtain
\begin{equation}\label{eq:budgetlower}
 u_j\ge c\beta_j\quad\hbox{in }B_{L_j}(x_j),\qquad
 \int_{B_{L_j}(x_j)}\rho_j\dd x
 \le\frac1{c\beta_j}\int_{B_{L_j}(x_j)}u_j\rho_j\dd x
 \le\frac C{\beta_j}.
\end{equation}

Set \(V_j=\theta_ju_j^2e^{u_j^2}=u_j\rho_j\), and define on \(\cl B_{L_j}(x_j)\)
\(D_j(z)=\min\{|z-x_j|,L_j-|z-x_j|\}\).
We first prove that \(\sup_{\cl B_{L_j}(x_j)}D_j^2V_j\le C\). Otherwise, pass to a subsequence and choose \(y_j\) where the continuous function \(D_j^2V_j\) attains its maximum on the closed ball, with
\[
 D_j(y_j)^2V_j(y_j)
 =\max_{y\in\cl B_{L_j}(x_j)}D_j(y)^2V_j(y)\to\infty.
\]
Set \(b_j=u_j(y_j)\) and \(\widehat\mu_j=V_j(y_j)^{-1/2}\). Since \(D_j(y_j)\le L_j\to0\), we have \(V_j(y_j)\to\infty\). Boundedness of \(\theta_j\) and the identity \(V_j(y_j)=\theta_jb_j^2e^{b_j^2}\) then give \(b_j\to\infty\). Also,
\[
 \frac{\widehat\mu_j}{D_j(y_j)}
 =\frac1{\sqrt{D_j(y_j)^2V_j(y_j)}}\to0.
\]
Fix \(T>0\). For all sufficiently large \(j\), \(T\widehat\mu_j<D_j(y_j)\le L_j-|y_j-x_j|\), so \(y_j+\widehat\mu_jz\in B_{L_j}(x_j)\) whenever \(|z|\le T\). The definition of \(D_j\) gives \(|D_j(y)-D_j(y')|\le|y-y'|\); hence
\[
 D_j(y_j+\widehat\mu_jz)\ge D_j(y_j)-T\widehat\mu_j>0.
\]
Substituting \(y_j+\widehat\mu_jz\) into \(D_j(y)^2V_j(y)\le D_j(y_j)^2V_j(y_j)\), we obtain
\[
 \frac{V_j(y_j+\widehat\mu_j z)}{V_j(y_j)}
 \le\left(\frac{D_j(y_j)}{D_j(y_j+\widehat\mu_jz)}\right)^2
 \le\left(1-\frac{T\widehat\mu_j}{D_j(y_j)}\right)^{-2}
 =1+o(1).
\]
The final error is independent of \(z\), so the estimate is uniform on each fixed ball. If \(u_j(y_j+\widehat\mu_j z)>b_j\), taking logarithms gives \(u_j^2-b_j^2\le o(1)\). If \(u_j(y_j+\widehat\mu_j z)\le b_j\), then \(u_j^2-b_j^2\le0\). Therefore
\[
 \zeta_j(z):=b_j(u_j(y_j+\widehat\mu_j z)-b_j)\le o(1),\qquad
 \zeta_j(0)=0,\qquad
 0\le-\Delta\zeta_j=\frac{u_j}{b_j}e^{u_j^2-b_j^2}\le1+o(1).
\]
Here \(\Delta\) acts on \(z\), and every occurrence of \(u_j\) on the right is evaluated at \(y_j+\widehat\mu_jz\). The local \(W^{2,q}\) and Schauder estimates at the beginning of this subsection give a subsequential limit \(\zeta\) satisfying \(-\Delta\zeta=e^{2\zeta}\) and \(\zeta\le0=\zeta(0)\). For fixed \(T>0\), make the change of variables \(x=y_j+\widehat\mu_jz\) on \(B_{T\widehat\mu_j}(y_j)\). Since \(\widehat\mu_j^2V_j(y_j+\widehat\mu_jz)\to e^{2\zeta(z)}\), this ball lies in \(B_{L_j}(x_j)\), and \eqref{eq:budget} holds, Fatou's lemma gives \(\int_{B_T}e^{2\zeta}\le4\pi\). Letting \(T\to\infty\) and applying monotone convergence yields \(\int_{\R^2}e^{2\zeta}\le4\pi\). Theorem~\ref{tool:chenli} therefore implies \(\zeta=U\) and \(\int_{\R^2}e^{2U}=4\pi\).
The distance between the two centers tends to infinity relative to either rescaling parameter. First, \(D_j(y_j)\le|y_j-x_j|\) gives \(|y_j-x_j|/\widehat\mu_j\to\infty\). If \(|y_j-x_j|/\mu_j\) were bounded along a subsequence, \eqref{eq:localassumptions} and \eqref{eq:Uz-mu_j} would give \(\mu_j^2V_j(y_j)=O(1)\). Together with \(D_j(y_j)\le|y_j-x_j|\le C\mu_j\), this would imply \(D_j(y_j)^2V_j(y_j)=O(1)\), contrary to the choice of \(y_j\).
Thus, for fixed \(T\), the balls \(B_{T\mu_j}(x_j)\) and \(B_{T\widehat\mu_j}(y_j)\) are eventually disjoint.
Since \(\mu_j/L_j\to0\) and \(T\widehat\mu_j<D_j(y_j)\le L_j-|y_j-x_j|\), both balls lie in \(B_{L_j}(x_j)\). The changes of variables \(x=x_j+\mu_jz\) and \(x=y_j+\widehat\mu_jz\), together with local convergence of the two rescaled functions, show that the integral of \(u_j\rho_j\) over each ball tends to \(m_T\). Disjointness and \(u_j\rho_j\ge0\) give
\[
 \liminf_j\int_{B_{L_j}(x_j)}u_j\rho_j\ge2m_T.
\]

Taking \(T\) large enough that \(2m_T>4\pi\) contradicts \eqref{eq:budget}. Hence \(\sup D_j^2V_j\le C\). Since \(D_j(z)=|z-x_j|\) on \(B_{L_j/2}(x_j)\), we obtain
\begin{equation}\label{eq:pointbudget}
 |z-x_j|^2\theta_ju_j(z)^2e^{u_j(z)^2}\le C.
\end{equation}

Because \(d_j/L_j\to0\), we have \(3d_j<L_j\) for all sufficiently large \(j\), and \eqref{eq:pointbudget} holds on \(B_{d_j}(x_j)\). To prove the gradient estimate, fix \(z\) with \(r=|z-x_j|\in(0,d_j]\). If \(y\in B_{r/2}(z)\), then \(r/2\le|y-x_j|\le3r/2\), and \(B_{r/2}(z)\subset B_{L_j/2}(x_j)\). Thus \eqref{eq:budgetlower} and \eqref{eq:pointbudget} imply \(\rho_j\le C/(\beta_jr^2)\) on this ball. {In the Green representation formula for the gradient, split the domain of integration into} \(B_{r/2}(z)\), \(B_{L_j}(x_j)\setminus B_{r/2}(z)\), and \(\Om\setminus B_{L_j}(x_j)\). By \eqref{eq:greenbounds}, the absolute values of these three contributions satisfy, respectively,
\[
 \frac{C}{\beta_jr^2}\int_{B_{r/2}(z)}\frac{\dd y}{|z-y|}
 \le\frac{C}{\beta_jr},\qquad
 \frac Cr\int_{B_{L_j}(x_j)}\rho_j\le\frac{C}{\beta_jr},\qquad
 \frac{CM_j}{L_j}.
\]
Since \(\sup_jM_j<\infty\),
\[
 \beta_jM_j\frac{d_j}{L_j}\le C\beta_je^{-\beta_j^2/32}\to0,
\]
the outer integral is also at most \(C/(\beta_jr)\). Together with \eqref{eq:pointbudget}, this proves \eqref{eq:DTconditions}. All remaining hypotheses of Theorem~\ref{tool:DTlocal} follow from the assumptions of the lemma.
\end{proof}

\subsection{Asymptotics of the source integral and the parameter}
\begin{lemma}\label{lem:matching}
Under the hypotheses of Lemma~\ref{lem:budget}, fix \(\tau\in(1/16,1/2)\) and set
\[
 r_j(\tau)=e^{-\tau\beta_j^2},\qquad
 Q_j(r)=\int_{B_r(x_j)}\rho_j\dd x,\qquad
 I_{\mathrm{out},j}(r)=\int_{\Om\setminus B_r(x_j)}G(x_j,y)\rho_j(y)\dd y.
\]
Then
\begin{align}
 Q_j(r_j(\tau))&=\frac{4\pi}{\beta_j}+\frac{4\pi}{\beta_j^3}+o(\beta_j^{-3}),\label{eq:flux}\\
 \log\frac{\theta_j\beta_j^2}{4}
 &=4\pi R(x_j)-1-\beta_j I_{\mathrm{out},j}(r_j(\tau))+o(1),
 \label{eq:matching}
\end{align}
where \(R\) is the Robin function defined in \eqref{eq:green}.
\end{lemma}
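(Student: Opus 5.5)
By Lemma~\ref{lem:budget}, Theorem~\ref{tool:DTlocal} applies with comparison radius \(d_j=e^{-\beta_j^2/16}\). Since \(\tau>1/16\), we have \(r_j(\tau)<d_j\) for large \(j\). Also, \(\log(1/\mu_j)=\beta_j^2/2+o(\beta_j^2)\) and \(\tau<1/2\), so
\[
 t_j(r_j)=2\log(r_j/\mu_j)+O(1)=(1-2\tau)\beta_j^2+o(\beta_j^2).
\]
This lies in \((0,\beta_j^2)\), so the expansion \eqref{eq:radialexpansion} is available at \(r_j\).

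For \eqref{eq:flux}, integrate the equation over \(B_{r_j}(x_j)\) and use the divergence theorem:
\[
 Q_j(r_j)=-\int_{\partial B_{r_j}}\partial_n u_j\,d\sigma=-2\pi r_j\,\cl u_j'(r_j).
\]
Write \(\cl u_j'=B_j'+(\cl u_j-B_j)'\). The gradient bound in \eqref{eq:DTcomparison} gives
\[
 2\pi r_j\,|(\cl u_j-B_j)'(r_j)|\le C r_j/(\beta_jd_j)=o(\beta_j^{-3}),
\]
because \(r_j/d_j=e^{-(\tau-1/16)\beta_j^2}\) decays exponentially. It remains to compute \(-2\pi r B_j'(r)\). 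Differentiating the expansion termwise is not justified as stated. Instead, I would use the radial ODE identity
\[
 -rB_j'(r)=\int_0^r\theta_jB_je^{B_j^2}s\,ds,
\]
and evaluate it with the expansion. Alternatively, I would read off the derivative form of (3.9) in \cite{DT}, which gives
\[
 -rB_j'=\frac{r\,t_j'}{\beta_j}\Bigl(1+\frac1{\beta_j^2}\Bigr)+o(\beta_j^{-3}),
\]
with \(r\,t_j'(r)=2\cdot\frac{r^2/(4\mu_j^2)}{1+r^2/(4\mu_j^2)}=2+O(\mu_j^2/r_j^2)\). Then \(-2\pi r_jB_j'(r_j)=4\pi/\beta_j+4\pi/\beta_j^3+o(\beta_j^{-3})\). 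I expect this derivative-level precision to be the main obstacle. If needed, I would redo the integral of \(\theta_jB_je^{B_j^2}\) using \(B_j^2=\beta_j^2-2t_j-2t_j/\beta_j^2+t_j^2/\beta_j^2+O(\beta_j^{-1})\) and the choice of \(\mu_j\). The error terms are exponentially small outside the bubble scale, and inside it they are controlled by the second-order profile from \cite{DT}.

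For \eqref{eq:matching}, evaluate the Green representation at \(x_j\):
\[
 \beta_j=\int_{B_{r_j}(x_j)}G(x_j,y)\rho_j\,dy+I_{\mathrm{out},j}(r_j).
\]
In the inner integral, replace \(G(x_j,y)\) by \(\frac1{2\pi}\log\frac1{|x_j-y|}-R(x_j)+O(|y-x_j|)\), using the bound on \(\nabla H\) from \eqref{eq:greenbounds}. The Robin part contributes \(-R(x_j)Q_j=-4\pi R(x_j)/\beta_j+o(\beta_j^{-1})\), and the Lipschitz error is \(O(r_jQ_j)\). For the logarithmic part, write \(\rho_j\) as the Laplacian of \(u_j\) and integrate by parts radially. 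Equivalently, use
\[
 \frac1{2\pi}\int_{B_r}\log\frac1{|y|}\,\rho_j
 =\frac{Q_j(r)}{2\pi}\log\frac1r+\frac1{2\pi}\int_0^r\frac{Q_j(s)}{s}\,ds,
\]
together with \(Q_j(s)=-2\pi s\,\cl u_j'(s)\). The last term then becomes \(\beta_j-\cl u_j(r_j)\), so
\[
 \frac1{2\pi}\int_{B_{r_j}}\log\frac1{|x_j-y|}\,\rho_j
 =\frac{Q_j(r_j)}{2\pi}\log\frac1{r_j}+\beta_j-\cl u_j(r_j).
\]
By \eqref{eq:DTcomparison} and \eqref{eq:radialexpansion},
\[
 \cl u_j(r_j)=\beta_j-\frac{t_j}{\beta_j}-\frac{t_j}{\beta_j^3}+O(\beta_j^{-2}).
\]
Now use \eqref{eq:flux} and \(t_j=2\log r_j-2\log(2\mu_j)+O(r_j^{-2}\mu_j^2)\). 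The \(\log r_j\) terms cancel to order \(o(\beta_j^{-1})\), since \(\log(1/r_j)\cdot o(\beta_j^{-3})=o(\beta_j^{-1})\). This leaves
\[
 0=\frac{2}{\beta_j}\log\frac1{2\mu_j}\Bigl(1+\frac{1}{\beta_j^2}\Bigr)-\beta_j-\frac{4\pi R(x_j)}{\beta_j}+I_{\mathrm{out},j}+o(\beta_j^{-1}).
\]
Next insert \(2\log\frac1{2\mu_j}=\log\frac{\theta_j\beta_j^2}{4}+\beta_j^2\) and multiply by \(\beta_j\). The \(\beta_j^2\) terms cancel, and the \(\beta_j^{-2}\) correction times \(\beta_j^2\) produces the constant \(1\). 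This gives
\[
 \log\frac{\theta_j\beta_j^2}{4}=4\pi R(x_j)-1-\beta_jI_{\mathrm{out},j}+o(1),
\]
where I use \(\log\theta_j=o(\beta_j^2)\) to absorb \(\log(\theta_j\beta_j^2)/\beta_j^2=o(1)\) in the correction term. The bookkeeping of \(O(\beta_j^{-2})\) errors, which become \(O(\beta_j^{-1})\) after multiplying by \(\beta_j\), is acceptable.
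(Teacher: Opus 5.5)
\textbf{Gap: the proposal does not establish \eqref{eq:flux} at order $\beta_j^{-3}$.}

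Your passage from \eqref{eq:flux} to \eqref{eq:matching} is sound. Integrating the logarithmic term radially and using $Q_j(s)=-2\pi s\,\cl u_j'(s)$ gives exactly the identity
\[
 \cl u_j(r)=\frac{Q_j(r)}{2\pi}\log\frac1r-\int_{B_r(x_j)}H(x_j,y)\rho_j\,dy+I_{\mathrm{out},j}(r),
\]
which the paper obtains by averaging the Green representation over circles. Your subsequent bookkeeping is correct. One small correction: the error in $\cl u_j(r_j)$ is $o(\beta_j^{-1})$, coming from the comparison, not $O(\beta_j^{-2})$; this is still enough.

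The genuine gap is \eqref{eq:flux} itself. You need it to precision $o(\beta_j^{-3})$, because $Q_j$ is multiplied by $\log(1/r_j)=\tau\beta_j^2$, and the $4\pi/\beta_j^3$ term is what produces the constant $-1$. Neither of your proposed routes delivers this.
\begin{itemize}
\item Theorem~\ref{tool:DTlocal}, as recalled, gives $B_j$ only up to $O(\beta_j^{-2})$ and contains no derivative expansion. Reading off a derivative form of (3.9) therefore appeals to something that is not available.
\item Evaluating $\int_0^r\theta_jB_je^{B_j^2}s\,ds$ with $B_j^2=\beta_j^2-2t_j+\dots+O(\beta_j^{-1})$ gives $e^{B_j^2}$ only up to a relative error $O(\beta_j^{-1})$ in the region $t_j=O(1)$. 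Essentially all the mass sits there, so you get $Q_j$ only up to $O(\beta_j^{-2})$.
\item The appeal to a second-order bubble profile is not carried out.
\end{itemize}

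\textbf{Missing idea: monotonicity of $Q_j$ plus long logarithmic secants.} Since $\rho_j>0$, $Q_j$ is nondecreasing. Integrating $-2\pi\,\cl u_j'(s)=Q_j(s)/s$ therefore gives, for $r_-<r<r_+$,
\[
 \frac{2\pi[\cl u_j(r_-)-\cl u_j(r)]}{\log(r/r_-)}\le Q_j(r)\le\frac{2\pi[\cl u_j(r)-\cl u_j(r_+)]}{\log(r_+/r)}.
\]
Take $r_\pm=e^{-\tau_\mp\beta_j^2}$ with fixed $1/16<\tau_-<\tau<\tau_+<1/2$. Then:
\begin{itemize}
\item both logarithmic lengths are $\asymp\beta_j^2$;
\item at all three radii you know $\cl u_j=\beta_j-(\beta_j^{-1}+\beta_j^{-3})t_j+o(\beta_j^{-1})$;
\item $t_j(r)=2\log r+\beta_j^2+\log(\theta_j\beta_j^2/4)+O(e^{-c\beta_j^2})$, so the constant parts cancel in the differences.
\end{itemize}
Dividing by the logarithmic lengths turns the $o(\beta_j^{-1})$ value errors into $o(\beta_j^{-3})$. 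Both secants then equal $4\pi/\beta_j+4\pi/\beta_j^3+o(\beta_j^{-3})$, which is \eqref{eq:flux}.

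This is how the paper argues. It needs neither the gradient comparison bound nor any derivative expansion of $B_j$. Alternatively, you could apply the same sandwich to $-rB_j'(r)=\int_0^r\theta_jB_je^{B_j^2}s\,ds$, which is nondecreasing while $B_j>0$, and then use your gradient estimate. That would complete your route as written.
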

\begin{proof}
Set \(\ell_j=\log(\theta_j\beta_j^2/4)=o(\beta_j^2)\). At the comparison radius, the logarithmic variable in the radial expansion satisfies
\(t_j(d_j)=\tfrac78\beta_j^2+\ell_j+o(1)<\beta_j^2\) for all sufficiently large \(j\). On any fixed closed subinterval of \((1/16,1/2)\),
\begin{equation}\label{eq:tlog}
 t_j(r_j(\tau))=(1-2\tau)\beta_j^2+\ell_j+O(e^{-c\beta_j^2})
\end{equation}
uniformly, for some \(c>0\). Thus \eqref{eq:radialexpansion} applies at these radii.
Let \(m_j(r)=(2\pi)^{-1}\int_0^{2\pi}u_j(x_j+re^{i\alpha})\dd\alpha\) be the circular mean about \(x_j\). Integrating the equation over a ball gives
\(-2\pi r m_j'(r)=Q_j(r)\). Since \(\rho_j>0\), the function \(Q_j(r)=\int_{B_r(x_j)}\rho_j\) is nondecreasing. Choose fixed numbers \(1/16<\tau_-<\tau<\tau_+<1/2\), and set \(r_-=r_j(\tau_+)\), \(r=r_j(\tau)\), and \(r_+=r_j(\tau_-)\). Integrating \(-2\pi m_j'(t)=Q_j(t)/t\) over \([r_-,r]\) and \([r,r_+]\), and using monotonicity of \(Q_j\), we obtain
\begin{equation}\label{eq:secants}
 \frac{2\pi[m_j(r_-)-m_j(r)]}{\log(r/r_-)}
 \le Q_j(r)\le
 \frac{2\pi[m_j(r)-m_j(r_+)]}{\log(r_+/r)}.
\end{equation}
By \eqref{eq:DTcomparison}, \eqref{eq:radialexpansion}, and \eqref{eq:tlog}, at each of the three radii,
\[
 m_j(r)=\beta_j-
 \left(\frac1{\beta_j}+\frac1{\beta_j^3}\right)
 (\beta_j^2+\ell_j+2\log r)+o(\beta_j^{-1}).
\]
Upon subtraction, the constant terms arising from \(\beta_j^2+\ell_j\) cancel. For example,
\[
 m_j(r_-)-m_j(r)
 =\left(\frac2{\beta_j}+\frac2{\beta_j^3}\right)\log(r/r_-)
   +o(\beta_j^{-1}).
\]
Dividing by \(\log(r/r_-)=(\tau_+-\tau)\beta_j^2\) gives an error of precisely \(o(\beta_j^{-3})\).
The quotient on the right of \eqref{eq:secants} has the same expansion. Since \(Q_j(r)\) lies between the two quotients, it satisfies \eqref{eq:flux}.

The same expansion of the function gives
\begin{equation}\label{eq:meanexpansion}
 m_j(r_j(\tau))=2\tau\beta_j-
 \frac{\ell_j+1-2\tau}{\beta_j}+o(\beta_j^{-1});
\end{equation}
where \(\ell_j/\beta_j^3=o(\beta_j^{-1})\).
For each fixed \(y\), the circular mean of \(\log|x_j+re^{it}-y|\) is \(\log\max\{r,|x_j-y|\}\), while that of \(H(\cdot,y)\) is \(H(x_j,y)\). Substituting these two mean-value identities into the Green representation formula for \(u_j\) and interchanging the order of integration gives
\begin{equation}\label{eq:circleexact}
 m_j(r)=\frac{Q_j(r)}{2\pi}\log\frac1r
 -\int_{B_r(x_j)}H(x_j,y)\rho_j(y)\dd y
 +I_{\mathrm{out},j}(r).
\end{equation}
The integral involving the regular part equals \(R(x_j)Q_j(r)+O(rQ_j(r))=4\pi R(x_j)/\beta_j+o(\beta_j^{-1})\). By \eqref{eq:flux}, the first term is \(2\tau\beta_j+2\tau/\beta_j+o(\beta_j^{-1})\). Substituting into \eqref{eq:circleexact} and comparing with \eqref{eq:meanexpansion}, we obtain
\[
 2\tau\beta_j-\frac{\ell_j+1-2\tau}{\beta_j}
 =2\tau\beta_j+\frac{2\tau-4\pi R(x_j)}{\beta_j}
   +I_{\mathrm{out},j}(r_j(\tau))+o(\beta_j^{-1}).
\]
Cancelling \(2\tau\beta_j\), multiplying by \(\beta_j\), and rearranging proves \eqref{eq:matching}.
\end{proof}

\subsection{Estimates at a global maximum}
\begin{lemma}\label{lem:logtheta}
Every sequence of solutions with \(\gamma_j\to\infty\) satisfies
\(\log\theta_j=o(\gamma_j^2)\), where \(\gamma_j\) is defined in \eqref{eq:notation}.
\end{lemma}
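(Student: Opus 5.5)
The plan is to argue by contradiction. We combine the decay estimate for the total mass in Lemma~\ref{lem:mass} with the bounded-energy blow-up result, Theorem~\ref{tool:DTglobal}. The only place where we use that result is in this contradiction step, as announced in the introduction.

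\emph{The upper side.} Lemma~\ref{lem:mass} gives \(0<\theta_j<\lambda_1(\Om)\), so \(\log\theta_j\le\log\lambda_1(\Om)=O(1)=o(\gamma_j^2)\). It therefore suffices to prove \(\log(1/\theta_j)=o(\gamma_j^2)\).

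\emph{The contradiction assumption.} Suppose this fails. Then there are \(\delta>0\) and a subsequence with
\[
 \log(1/\theta_j)\ge\delta\gamma_j^2 .
\]
Since \(\gamma_j\to\infty\), this forces \(\theta_j\to0\). In particular \(\theta_j<e^{-2}\) for large \(j\), so the third bound in \eqref{eq:massbounds} applies and gives
\[
 M_j\le\frac{C_\Om}{\sqrt{\log(1/\theta_j)}}\le\frac{C_\Om}{\sqrt\delta\,\gamma_j}.
\]
Inserting this into the energy identity \eqref{eq:energyidentity} yields
\[
 E_j\le\gamma_jM_j\le C_\Om\delta^{-1/2}.
\]
So along this subsequence the energy is uniformly bounded, while \(\gamma_j\to\infty\).

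\emph{Applying the bounded-energy result.} Because the energy is bounded and \(\gamma_j\to\infty\), Theorem~\ref{tool:DTglobal} applies. After passing to a further subsequence, it gives \(\sqrt{\theta_j}\gamma_j\to c\in(0,\infty)\). Hence
\[
 \log(1/\theta_j)=2\log\gamma_j+O(1)=o(\gamma_j^2),
\]
which contradicts \(\log(1/\theta_j)\ge\delta\gamma_j^2\). This proves the lemma.

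\emph{Where care is needed.} The only delicate point is checking that Theorem~\ref{tool:DTglobal} is used legitimately. Its hypotheses are a uniform energy bound, positivity of the solutions, and \(\gamma_j\to\infty\). The energy bound is exactly what the contradiction assumption together with the mass decay produces. Its conclusion about \(\gamma_j\) is justified by the remark after Theorem~\ref{tool:DTglobal}, which says that the first concentration point in \cite{DT} is a global maximum point. Everything else in the argument is elementary.
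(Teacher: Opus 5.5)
Your proposal is correct and follows essentially the same route as the paper. Both arguments bound \(\log\theta_j\) above via \(\theta_j<\lambda_1(\Om)\), assume \(\log(1/\theta_j)\ge\delta\gamma_j^2\) along a subsequence, and use the mass decay in \eqref{eq:massbounds} to get bounded energy. Both then obtain \(\sqrt{\theta_j}\gamma_j\to c>0\) from Theorem~\ref{tool:DTglobal}. The only difference is cosmetic: the paper reaches the contradiction through \(\sqrt{\theta_j}\gamma_j\le\gamma_je^{-\delta\gamma_j^2/2}\to0\), while you rewrite the limit as \(\log(1/\theta_j)=2\log\gamma_j+O(1)\).
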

\begin{proof}
Lemma~\ref{lem:mass} bounds the positive part of \(\log\theta_j\). If its negative part were not \(o(\gamma_j^2)\), there would be a subsequence and some \(\eps>0\) such that
\(\log(1/\theta_j)\ge\eps\gamma_j^2\). Then \(\theta_j\to0\), and \eqref{eq:energyidentity} and \eqref{eq:massbounds} give
\[
 E_j\le\gamma_jM_j\le
 \frac{C_\Om\gamma_j}{\sqrt{\log(1/\theta_j)}}\le C_\Om/\sqrt\eps.
\]
By Theorem~\ref{tool:DTglobal}, a further subsequence satisfies \(\sqrt{\theta_j}\gamma_j\to c>0\). On the other hand, the assumed parameter bound gives
\(\sqrt{\theta_j}\gamma_j\le\gamma_je^{-\eps\gamma_j^2/2}\to0\), a contradiction.
\end{proof}

\begin{lemma}\label{lem:maxbubble}
Suppose that \(u_j(x_j)=\gamma_j\to\infty\), and define \(\mu_j=(\theta_j\gamma_j^2e^{\gamma_j^2})^{-1/2}\). Then
\begin{equation}\label{eq:maxbubble}
 \gamma_j(u_j(x_j+\mu_jz)-\gamma_j)\to U(z)
 \quad\hbox{in }C^2_{\mathrm{loc}}(\R^2).
\end{equation}
For every fixed \(\sigma\in(0,1/2)\),
\begin{equation}\label{eq:maxbudget}
 \gamma_j\int_{B_{\exp(-\sigma\gamma_j^2)}(x_j)}\rho_j\dd x\to4\pi,
 \qquad
 \int_{B_{\exp(-\sigma\gamma_j^2)}(x_j)}u_j\rho_j\dd x\to4\pi.
\end{equation}
\end{lemma}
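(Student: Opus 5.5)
We will prove the statement by a standard blow-up at the global maximum, and then obtain the mass statements by combining Lemma~\ref{lem:budget} with Theorem~\ref{tool:DTlocal}.

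\emph{Step 1: rescaling at \(x_j\).} Set \(\zeta_j(z)=\gamma_j(u_j(x_j+\mu_jz)-\gamma_j)\). Since \(x_j\) is a global maximum, \(\zeta_j\le0=\zeta_j(0)\). The function \(\zeta_j\) satisfies
\[
 -\Delta\zeta_j=\frac{u_j}{\gamma_j}e^{u_j^2-\gamma_j^2}\in[0,1].
\]
Theorem~\ref{tool:boundary} places \(x_j\) in \(K_{\mathrm{peak}}\). By Lemma~\ref{lem:logtheta}, \(\log(1/\mu_j)=\gamma_j^2/2+o(\gamma_j^2)\), so \(\mu_j\to0\) and the rescaled domains exhaust \(\R^2\). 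The local compactness argument at the start of Section~\ref{sec:local} then gives \(C^2_{\mathrm{loc}}\) subsequential convergence to some \(\zeta\). On compact sets, \(u_j/\gamma_j=1+O(\gamma_j^{-2})\) and \(u_j^2-\gamma_j^2=2\zeta_j+O(\gamma_j^{-2})\). Hence \(-\Delta\zeta=e^{2\zeta}\), with \(\zeta\le0=\zeta(0)\).

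For finiteness of \(\int e^{2\zeta}\), change variables. We get \(\int_{B_T}e^{2\zeta}=\lim\gamma_j\int_{B_{T\mu_j}(x_j)}\rho_j\le\liminf\int_\Om u_j\rho_j\,u_j^{-1}\gamma_j\cdots\). This route is awkward. It is simpler to use \(u_j\ge\gamma_j/2\) on the ball, which gives
\[
 \gamma_j\int_{B_{T\mu_j}}\rho_j\le 2\int_{B_{T\mu_j}}u_j\rho_j\le 2E_j .
\]
That bound is not uniform, however, since \(E_j\) may diverge. Instead, I would use the Green representation: \(\zeta_j\ge\) a bound \(\ge -C\) on compacts already comes from the compactness lemma, and finiteness of \(\int_{\R^2}e^{2\zeta}\) follows from the standard Brezis--Merle-type fact. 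Concretely, \(\zeta\) is a bounded-above entire solution of the Liouville equation with \(e^{2\zeta}\le1\). Its mean \(\bar\zeta(r)\) satisfies \(\bar\zeta'(r)=-(2\pi r)^{-1}\int_{B_r}e^{2\zeta}\). Once \(\int_{B_{r_0}}e^{2\zeta}\ge c>0\), \(\bar\zeta\) decays logarithmically. A Jensen bound \(e^{2\bar\zeta}\le\overline{e^{2\zeta}}\) does not give integrability by itself. Cleaner is to cite Chen--Li's result that solutions with \(\zeta\le0\) and nonconstant curvature integral are classified. Since this is the delicate point, I would actually bound \(\gamma_j\int_{B_{T\mu_j}}\rho_j\) by comparison. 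Using \(M_j\le C_\Om\) (Lemma~\ref{lem:mass}) and \(u_j\ge c\gamma_j\) near \(x_j\), the Green formula \(u_j(x_j)\ge\) gives
\[
 \gamma_j=u_j(x_j)\ge\Bigl(\tfrac1{2\pi}\log\tfrac1{T\mu_j}-C\Bigr)\int_{B_{T\mu_j}}\rho_j\approx\frac{\gamma_j^2}{4\pi}\int_{B_{T\mu_j}}\rho_j .
\]
This yields \(\gamma_j\int_{B_{T\mu_j}}\rho_j\le4\pi+o(1)\), and hence \(\int_{\R^2}e^{2\zeta}\le4\pi\). Theorem~\ref{tool:chenli} together with \(\max\zeta=\zeta(0)=0\) then forces \(\zeta=U\). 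Uniqueness of the limit gives convergence of the full sequence, which proves \eqref{eq:maxbubble}.

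\emph{Step 2: the integral conditions.} The main obstacle is establishing \eqref{eq:budget} at the scale \(L_j\), so that Lemma~\ref{lem:budget} applies. The difficulty is that mass could accumulate between the scales \(\mu_j\) and \(e^{-\sigma\gamma_j^2}\).

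The upper bound comes from the same Green-formula lower estimate of \(\gamma_j=u_j(x_j)\). For \(r=e^{-\sigma\gamma_j^2}\), we have \(G(x_j,y)\ge\frac{\sigma\gamma_j^2}{2\pi}-C\) on \(B_r\). This gives \(\int_{B_r}\rho_j\le 2\pi/(\sigma\gamma_j)\), which is too weak for \(\sigma<1/2\). I would refine it by splitting \(B_r\) into \(B_{T\mu_j}\), where the kernel is about \(\gamma_j^2/(4\pi)\) and the mass is about \(4\pi/\gamma_j\) (this part already nearly exhausts \(\gamma_j\)), and the annulus, where the kernel is \(\ge\sigma\gamma_j^2/(2\pi)\). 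The leftover gives
\[
 \int_{B_r\setminus B_{T\mu_j}}\rho_j\le\frac{C\,(\epsilon_T\gamma_j+C)}{\gamma_j^2},\qquad \epsilon_T\to0 .
\]
Since \(u_j\le\gamma_j\), this shows that \(\gamma_j\int_{B_r}\rho_j\) and \(\int_{B_r}u_j\rho_j\) are both \(\le4\pi+\epsilon_T+o(1)\). Note that \(u_j\le\gamma_j\) handles \(u_j\rho_j\) directly, while the lower bounds come from the bubble core, where \(u_j=\gamma_j+O(\gamma_j^{-1})\) and the mass of the core is \(\to\int_{B_T}e^{2U}\). Letting \(T\to\infty\) proves \eqref{eq:maxbudget} for every \(\sigma\in(0,1/2)\). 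In particular, \eqref{eq:budget} holds with \(\sigma=1/32\).
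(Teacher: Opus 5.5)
Your proposal is correct and follows essentially the paper's route. You rescale about the global maximum using the compactness argument at the start of Section~\ref{sec:local}, bound $\int_{\R^2}e^{2\zeta}\le4\pi$ through the Green representation at $x_j$ restricted to $B_{T\mu_j}(x_j)$, and conclude with Theorem~\ref{tool:chenli}; your bound $\log(1/|x_j-y|)\ge\log(1/(T\mu_j))$ is slightly simpler than the paper's handling of the $\log|z|$ term. The mass limits then come from the same inner-ball/annulus splitting, with $G(x_j,\cdot)\ge\sigma\gamma_j^2/(2\pi)-C$ on the annulus, $u_j\le\gamma_j$ for the upper bound and the bubble core for the lower bound. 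A few points need cleaning up. Deriving the mass statements from Lemma~\ref{lem:budget} and Theorem~\ref{tool:DTlocal}, as your opening announces, would be circular, because this lemma supplies their integral hypothesis; your Step~2 never uses them, so just drop that sentence along with the abandoned detours in Step~1. Also, the error in the numerator of your annulus bound is only $o(\gamma_j)$, not $O(1)$. This still gives the conclusion after multiplying by $\gamma_j$.
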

\begin{proof}
By Theorem~\ref{tool:boundary}, the points \(x_j\) lie in a fixed compact subset of \(\Om\). Lemma~\ref{lem:logtheta} gives \(\log(1/\mu_j)/\gamma_j^2\to1/2\), so the rescaled domains exhaust \(\R^2\).
The functions \(\eta_j=\gamma_j(u_j(x_j+\mu_jz)-\gamma_j)\) satisfy
\[
 \eta_j\le0=\eta_j(0),\qquad
 0\le-\Delta\eta_j=
 \left(1+\frac{\eta_j}{\gamma_j^2}\right)
 e^{2\eta_j+\eta_j^2/\gamma_j^2}\le1.
\]
The local \(W^{2,q}\) and Schauder estimates at the beginning of the section give a subsequential limit \(\eta\) satisfying \(-\Delta\eta=e^{2\eta}\) and \(\eta\le0=\eta(0)\).
In the Green representation formula at \(x_j\), retain only the integral over a fixed rescaled ball to obtain
\[
 1\ge\int_{B_T}\frac{G(x_j,x_j+\mu_jz)}{\gamma_j^2}
 \frac{u_j(x_j+\mu_jz)}{\gamma_j}
 e^{u_j(x_j+\mu_jz)^2-\gamma_j^2}\dd z.
\]
where
\[
 \frac{G(x_j,x_j+\mu_jz)}{\gamma_j^2}
 =\frac{\log(1/\mu_j)}{2\pi\gamma_j^2}
  -\frac{\log|z|}{2\pi\gamma_j^2}
  -\frac{H(x_j,x_j+\mu_jz)}{\gamma_j^2},
\]
The first term tends to \(1/(4\pi)\); the other two tend to zero uniformly on each fixed annulus \(\{\eps\le|z|\le T\}\), \(0<\eps<T\).
First exclude a small ball about the origin, use positivity to take the lower limit, and then shrink the excluded ball. This gives
\(1\ge(4\pi)^{-1}\int_{B_T}e^{2\eta}\). Let \(T\to\infty\). Theorem~\ref{tool:chenli}, together with \(\eta\le0=\eta(0)\), yields \(\eta=U\). Every subsequence has a further convergent subsequence with this limit, so \eqref{eq:maxbubble} holds for the full sequence.

Set \(m_T=\int_{B_T}e^{2U}\uparrow4\pi\) and \(r_j=e^{-\sigma\gamma_j^2}\). Under the change of variables \(y=x_j+\mu_jz\), we have
\[
 \begin{aligned}
 \gamma_j\mu_j^2\rho_j(x_j+\mu_jz)
 &=\frac{u_j(x_j+\mu_jz)}{\gamma_j}
   e^{u_j(x_j+\mu_jz)^2-\gamma_j^2}\to e^{2U(z)},\\
 \mu_j^2u_j\rho_j(x_j+\mu_jz)
 &=\left(\frac{u_j(x_j+\mu_jz)}{\gamma_j}\right)^2
   e^{u_j(x_j+\mu_jz)^2-\gamma_j^2}\to e^{2U(z)}.
 \end{aligned}
\]
In the second line, \(u_j\) is also evaluated at \(x_j+\mu_jz\). These densities are uniformly bounded on fixed balls.
Since \(\int_{B_T}|\log|z||\dd z<\infty\), we may also pass to the limit in the integral with the Green kernel. Thus
\[
 \gamma_j\int_{B_{T\mu_j}(x_j)}\rho_j\to m_T,
 \quad \int_{B_{T\mu_j}(x_j)}u_j\rho_j\to m_T,
 \quad \frac1{\gamma_j}\int_{B_{T\mu_j}(x_j)}G(x_j,y)\rho_j(y)\dd y\to\frac{m_T}{4\pi}.
\]
On \(B_{r_j}(x_j)\setminus B_{T\mu_j}(x_j)\), the Green function is at least \(\sigma\gamma_j^2/(2\pi)-C\). Subtracting the inner contribution from the Green representation formula and discarding the nonnegative outer contribution gives
\[
 \limsup_j\gamma_j\int_{B_{r_j}(x_j)\setminus B_{T\mu_j}(x_j)}\rho_j
 \le\frac{2\pi}{\sigma}\left(1-\frac{m_T}{4\pi}\right).
\]
Letting \(T\to\infty\) proves the first assertion of \eqref{eq:maxbudget}. Since \(u_j\le\gamma_j\), the upper limit of the weighted integral is at most \(4\pi\), while convergence of the integral on small balls gives a lower limit of at least \(4\pi\). This proves the second assertion.
\end{proof}

\begin{proposition}\label{prop:maxbound}
Every solution satisfies
\begin{equation}\label{eq:maxbound}
 \theta\gamma^2\le C_\Om,
\end{equation}
and
\begin{equation}\label{eq:logenergy}
 E\le\gamma M\le C'_\Om\bigl[1+\log^+(1/\theta)\bigr].
\end{equation}
In particular, \(E_j\to\infty\) implies \(\theta_j\to0\).
\end{proposition}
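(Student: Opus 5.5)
Argue by contradiction for \eqref{eq:maxbound}: suppose a sequence of solutions has \(\theta_j\gamma_j^2\to\infty\). Since \(\theta_j<\lambda_1(\Om)\) by Lemma~\ref{lem:mass}, this forces \(\gamma_j\to\infty\). Choose global maximum points \(x_j\). By Theorem~\ref{tool:boundary} they lie in the fixed compact set \(K_{\mathrm{peak}}\), and \(\nabla u_j(x_j)=0\). We then check the hypotheses of Lemma~\ref{lem:budget} with \(\beta_j=\gamma_j\).
\begin{itemize}
\item Lemma~\ref{lem:logtheta} gives \(\log\theta_j=o(\gamma_j^2)\).
\item Lemma~\ref{lem:maxbubble} gives the rescaled \(C^2_{\mathrm{loc}}\) convergence \eqref{eq:maxbubble}.
\item The second part of \eqref{eq:maxbudget} with \(\sigma=1/32\) gives exactly \eqref{eq:budget} with \(L_j=e^{-\gamma_j^2/32}\).
\item \(\sup_jM_j\le C_\Om\) and \(\sup_j\theta_j<\lambda_1(\Om)\) come from Lemma~\ref{lem:mass}.
\end{itemize}
Hence Theorem~\ref{tool:DTlocal} applies at \(x_j\), and Lemma~\ref{lem:matching} yields \eqref{eq:matching} for a fixed \(\tau\in(1/16,1/2)\), say \(\tau=1/4\).

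From \eqref{eq:matching}, since \(I_{\mathrm{out},j}\ge0\) because \(G>0\) and \(\rho_j>0\), we get
\[
\log\frac{\theta_j\gamma_j^2}{4}\le 4\pi R(x_j)-1+o(1)\le C_{K_{\mathrm{peak}}},
\]
where \(R\) is bounded on the compact set \(K_{\mathrm{peak}}\) by \eqref{eq:greenbounds}. This contradicts \(\theta_j\gamma_j^2\to\infty\). The contradiction proves \eqref{eq:maxbound} with a constant depending only on \(\Om\), since every step used only \(\Om\)-dependent constants and the argument applies to every sequence. The step needing most care is checking that the hypotheses of Lemma~\ref{lem:budget} truly hold at a \emph{global} maximum without an energy bound. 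This is handled entirely by Lemmas~\ref{lem:logtheta} and \ref{lem:maxbubble}, so the remaining work is bookkeeping. The sign of \(I_{\mathrm{out}}\) is exactly what makes the exterior potential harmless here, in contrast with the later sections.

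For \eqref{eq:logenergy}, the first inequality is \eqref{eq:energyidentity}. Consider two cases.
\begin{itemize}
\item If \(\theta\ge e^{-2}\), then \eqref{eq:maxbound} gives \(\gamma\le C\), and \(M\le C_\Om\), so \(\gamma M\le C\).
\item If \(\theta<e^{-2}\), combine \(\gamma\le\sqrt{C_\Om/\theta}\) with \eqref{eq:massbounds}. This gives \(\gamma M\le C/\sqrt{\theta\log(1/\theta)}\), which is not logarithmic. So instead split again. If \(\gamma^2\le\log(1/\theta)\), then \(\gamma M\le C\gamma/\sqrt{\log(1/\theta)}\le C\). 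Otherwise \(\gamma^2>\log(1/\theta)\), and we use \(M\le C_\Om\), so we need \(\gamma\lesssim\log(1/\theta)\).
\end{itemize}
In the second alternative I would use the already-established local picture: along any sequence with \(\gamma_j\to\infty\), Lemma~\ref{lem:maxbubble} shows that \(\gamma_j\rho_j\) carries mass about \(4\pi\) near \(x_j\), so \(M_j\ge c/\gamma_j\). A direct bound \(\gamma_j^2\le C\log(1/\theta_j)\) follows from \eqref{eq:maxbound} only in the form \(\gamma^2\le C/\theta\), which is too weak.

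So the cleaner route, which I would pursue, is a contradiction sequence with \(\gamma_jM_j/(1+\log^+(1/\theta_j))\to\infty\). Then \(\theta_j\to0\), because if \(\theta_j\) stayed bounded below, the first case would bound \(\gamma_jM_j\). Moreover \(\gamma_j^2/\log(1/\theta_j)\to\infty\), because otherwise the estimate \(\gamma M\le C\gamma/\sqrt{\log(1/\theta)}\) from \eqref{eq:massbounds} gives \(\gamma_jM_j\le C\). Now \eqref{eq:matching} with \(I_{\mathrm{out}}\ge0\) and the matching lower information should be used: \(\beta_jI_{\mathrm{out},j}\) must absorb \(\log(1/\theta_j)\). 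Since \(I_{\mathrm{out},j}\le C M_j\log(1/r_j)\sim C\tau M_j\gamma_j^2\), I expect to extract \(\log(1/\theta_j)\le C\gamma_j^3M_j\). Relating this to \(\gamma_jM_j\) is the main obstacle, and I am unsure it closes.

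The final claim follows from \eqref{eq:logenergy}: if \(E_j\to\infty\), then \(\log^+(1/\theta_j)\to\infty\), so \(\theta_j\to0\).
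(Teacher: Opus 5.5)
Your argument for \eqref{eq:maxbound} is correct and is the paper's argument. At a global maximum, Lemmas~\ref{lem:logtheta} and~\ref{lem:maxbubble} supply the hypotheses of Lemma~\ref{lem:budget}, and \eqref{eq:matching} with \(\tau=1/4\) and \(I_{\mathrm{out},j}\ge0\) bounds \(\theta_j\gamma_j^2\). The proof of \eqref{eq:logenergy}, however, is left open, and the route you sketch cannot close it. Inserting the upper bound \(I_{\mathrm{out},j}\le CM_j\gamma_j^2\) into \eqref{eq:matching} gives \(\log(1/\theta_j)\le C+C\gamma_j^3M_j+2\log\gamma_j\). This only bounds \(\gamma_j^3M_j\) from \emph{below}, whereas you need an upper bound on \(\gamma_jM_j\). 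The local bound \(M_j\ge c/\gamma_j\) points the wrong way for the same reason.

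The missing idea is a \emph{lower} bound for the exterior potential that is proportional to the total mass. By the boundary strip estimate \eqref{eq:strip}, there is a compact \(K_1\Subset\Om\), depending only on \(\Om\), with \(\int_{K_1}\rho\ge M/2\) for every solution. Moreover \(G\ge g_0>0\) on \(K_{\mathrm{peak}}\times K_1\), since \(G\) is positive and continuous off the diagonal and tends to \(+\infty\) at the diagonal. Remove the ball \(B_{e^{-\gamma_j^2/4}}(x_j)\); its mass \(Q_j\) satisfies \(\gamma_jQ_j\to4\pi\) by \eqref{eq:flux}. This gives \(I_{\mathrm{out},j}\ge g_0(M_j/2-Q_j)\), and combined with \eqref{eq:matching},
\[
 \frac{g_0}{2}\gamma_jM_j\le\gamma_jI_{\mathrm{out},j}+g_0\gamma_jQ_j
 =4\pi R(x_j)-1-\log\frac{\theta_j\gamma_j^2}{4}+g_0\gamma_jQ_j+o(1)
 \le C+\log^+\frac{1}{\theta_j\gamma_j^2}.
\]
So \(\gamma_jI_{\mathrm{out},j}\) is bounded below by \(c\gamma_jM_j-C\) and above by \(C+\log^+(1/\theta_j)\) once \(\gamma_j\ge1\). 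That is the precise sense in which the exterior term ``absorbs'' \(\log(1/\theta_j)\).

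With this, your contradiction sequence closes at once. Since \(\gamma_jM_j\to\infty\) and \(M_j\le C_\Om\), you get \(\gamma_j\to\infty\), so the estimate applies, and \(\log^+(1/(\theta_j\gamma_j^2))\le\log^+(1/\theta_j)\) for \(\gamma_j\ge1\). The case \(\gamma<1\) is trivial, since \(\gamma M\le M\le C_\Om\).

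Finally, do not route the last assertion through \eqref{eq:logenergy}. It follows from \eqref{eq:maxbound} alone: \(E_j\le\gamma_jM_j\le C_\Om\gamma_j\) forces \(\gamma_j\to\infty\), and then \(\theta_j\le C_\Om\gamma_j^{-2}\to0\).
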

\begin{proof}
If \(\gamma_j\to\infty\), Lemmas~\ref{lem:logtheta} and \ref{lem:maxbubble} verify all hypotheses of Lemma~\ref{lem:budget}, including \eqref{eq:budget} with \(\sigma=1/32\). Taking \(\tau=1/4\) in Lemma~\ref{lem:matching}, we obtain
\begin{equation}\label{eq:maxmatching}
 \log(\theta_j\gamma_j^2/4)
 =4\pi R(x_j)-1-\gamma_j I_{\mathrm{out},j}(e^{-\gamma_j^2/4})+o(1).
\end{equation}
Since \(G\ge0\) and \(\rho_j\ge0\), we have \(I_{\mathrm{out},j}(e^{-\gamma_j^2/4})\ge0\). The function \(R\) is bounded on the fixed compact set containing all maximum points, so \(\theta_j\gamma_j^2\) is bounded. {If \eqref{eq:maxbound} failed for the full family of solutions, we could choose a sequence with \(\theta_j\gamma_j^2\to\infty\). Since \(\theta_j<\lambda_1(\Om)\), this would force \(\gamma_j\to\infty\), contradicting the preceding bound.}

By \eqref{eq:strip}, there is a compact set \(K_1\Subset\Om\), independent of the solution, such that \(\int_{K_1}\rho\dd x\ge M/2\). Recall that \(K_{\mathrm{peak}}\Subset\Om\) is the fixed compact set from Theorem~\ref{tool:boundary}, depending only on \(\Om\) and containing all critical points of every positive solution, in particular each maximum point \(x_j\). The Green function has a positive lower bound \(g_0\) on \(K_{\mathrm{peak}}\times K_1\): away from the diagonal this follows from compactness and positivity, while near the diagonal \(G\) tends to \(+\infty\). Abbreviate \(Q_j=Q_j(e^{-\gamma_j^2/4})\) and \(I_{\mathrm{out},j}=I_{\mathrm{out},j}(e^{-\gamma_j^2/4})\). Then
\[
 I_{\mathrm{out},j}\ge g_0(M_j/2-Q_j),\qquad
 \gamma_jQ_j=4\pi+o(1).
\]
Substituting this lower bound into \eqref{eq:maxmatching} and rearranging gives, for all sufficiently large \(j\),
\[
 \begin{aligned}
 \frac{g_0}2\gamma_jM_j
 &\le\gamma_j I_{\mathrm{out},j}+g_0\gamma_jQ_j\\
 &=4\pi R(x_j)-1-\log(\theta_j\gamma_j^2/4)
   +g_0\gamma_jQ_j+o(1)\\
 &\le C+\log^+(1/(\theta_j\gamma_j^2)).
 \end{aligned}
\]
Thus
    \[\gamma_jM_j\le C_\Om[1+\log^+(1/(\theta_j\gamma_j^2))].\]
{This estimate holds for all solutions with a constant depending only on \(\Om\). Otherwise, choose solutions for which
\[
 \frac{\gamma_jM_j}{1+\log^+(1/(\theta_j\gamma_j^2))}\longrightarrow\infty.
\]
Since \(M_j\le C_\Om\), necessarily \(\gamma_j\to\infty\), contradicting the estimate just proved.}

If \(\gamma\ge1\), then \(\log^+(1/(\theta\gamma^2))\le\log^+(1/\theta)\). If \(0<\gamma<1\), the mass estimate gives \(\gamma M\le M\le C_\Om\). Together with \(E\le\gamma M\), these bounds yield \eqref{eq:logenergy}.

 Finally, \(E_j\to\infty\) implies \(\gamma_j\to\infty\), and \eqref{eq:maxbound} then gives \(\theta_j\to0\).
\end{proof}

\section{Atomic concentration and decay of the total mass}\label{sec:pressure}
Throughout this section, assume that \(\theta_j\to0\). By Proposition~\ref{prop:maxbound}, this holds for the sequences of unbounded energy in the proof of the main theorem. Having bounded the first factor in the energy estimate, we turn to the weak limits of the normalized measures to study \(M_j/\sqrt{\theta_j}\). We prove the existence of an atom of positive mass, determine the relation between the two limiting atomic masses, and describe the distribution of function values near an atom. We then use this distribution estimate to improve the decay of the total mass and obtain the parameter and energy estimates needed for rescaling.

\subsection{Weak compactness and atomic decomposition}
Taking \(D=\Om\) and \(\psi=1\) in \eqref{eq:pressure-split}, we obtain
\(\Pi_j(\Om)\le C_{L,\Om}\theta_j+L^{-1}\). Letting first \(j\to\infty\) and then \(L\uparrow4\pi\) gives
\begin{equation}\label{eq:pressuretotal}
 \sup_j\Pi_j(\Om)<\infty,\qquad
 \limsup_j\Pi_j(\Om)\le\frac1{4\pi}.
\end{equation}
\begin{lemma}\label{lem:boundary-zero}
Regard \(\nu_j,\Pi_j\) as measures on \(\cl\Om\), following the extension convention in Section~\ref{sec:notation}. There is a common subsequence such that
\[
 \nu_j\weak\nu,\qquad \Pi_j\weak\Pi
 \quad\hbox{on }\cl\Om.
\]
Every such pair of weak limits satisfies
\[
 \nu(\partial\Om)=\Pi(\partial\Om)=0,
 \qquad \nu(\Om)=1.
\]
In particular, \(\nu\) is a probability measure on \(\Om\).
\end{lemma}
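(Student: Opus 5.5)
The plan is to obtain weak compactness first and then rule out boundary mass using the strip estimate.

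\emph{Compactness.} The measures \(\nu_j\) are nonnegative with \(\nu_j(\cl\Om)=\nu_j(\Om)=1\). By \eqref{eq:pressuretotal}, \(\sup_j\Pi_j(\cl\Om)<\infty\). Since \(\cl\Om\) is compact, \(C(\cl\Om)\) is separable, and the Riesz representation theorem applies. Banach--Alaoglu, or Prokhorov's theorem on a compact metric space, therefore gives a subsequence along which \(\nu_j\weak\nu\). A further subsequence gives \(\Pi_j\weak\Pi\), so we obtain a common subsequence. Testing with the constant function \(1\in C(\cl\Om)\) yields \(\nu(\cl\Om)=1\).

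\emph{No boundary mass.} Fix \(0<\eps<\delta_0\) with \(\delta_0\) as in Lemma~\ref{lem:strip}. The set \(S_\eps=\{x\in\cl\Om:d(x)<\eps\}\) is relatively open in \(\cl\Om\) and contains \(\partial\Om\). The weak-convergence inequality for open sets (Portmanteau) gives
\[
 \nu(\partial\Om)\le\nu(S_\eps)\le\liminf_j\nu_j(S_\eps).
\]
Since \(\nu_j\) assigns no mass to \(\partial\Om\), we have \(\nu_j(S_\eps)=\nu_j\{d<\eps\}\le C_\Om\eps\) by \eqref{eq:strip}. Hence \(\nu(\partial\Om)\le C_\Om\eps\). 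Similarly,
\[
 \Pi(\partial\Om)\le\liminf_j\Pi_j\{d<\eps\}\le C_\Om\eps\sup_j\Pi_j(\Om).
\]
This is \(\le C\eps\) by \eqref{eq:pressuretotal}. Letting \(\eps\to0\) gives \(\nu(\partial\Om)=\Pi(\partial\Om)=0\).

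\emph{Conclusion.} It follows that \(\nu(\Om)=\nu(\cl\Om)-\nu(\partial\Om)=1\), so \(\nu\) restricted to \(\Om\) is a probability measure. The same argument applies to any pair of weak limits along any subsequence, because it uses only the uniform bounds \eqref{eq:strip} and \eqref{eq:pressuretotal}.

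There is no real obstacle. The one point needing care is applying the lower semicontinuity inequality to a relatively open neighborhood of \(\partial\Om\) in \(\cl\Om\), rather than to the closed set \(\partial\Om\) itself. This is what makes the uniform strip bound transfer to the limit.
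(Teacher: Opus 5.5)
Your proposal is correct and follows essentially the same route as the paper: compactness of the measures on the compact set \(\cl\Om\), the boundary strip estimate \eqref{eq:strip} with \(\eps\) fixed before \(j\to\infty\), and then \(\eps\downarrow0\). The only difference is that you use the Portmanteau inequality on the relatively open strip \(\{d<\eps\}\), whereas the paper tests weak convergence directly against the continuous cutoff \(\chi_\eps=\max\{1-d/\eps,0\}\); the two arguments are equivalent.
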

\begin{proof}
Set \(K=\sup_j\Pi_j(\Om)<\infty\). Since \(\cl\Om\) is compact and
\(\nu_j(\cl\Om)=1\), \(\Pi_j(\cl\Om)\le K\), weak compactness of finite nonnegative measures gives a common subsequence such that
\(\nu_j\weak\nu\) and \(\Pi_j\weak\Pi\), where \(\nu,\Pi\) are finite nonnegative measures on \(\cl\Om\).

Let \(d\) be the distance function from Lemma~\ref{lem:strip}. Fix \(0<\eps<\delta_0\), and define
\[
 \chi_\eps(x)=\max\left\{1-\frac{d(x)}{\eps},0\right\},
 \qquad x\in\cl\Om.
\]
Then \(\chi_\eps\in C(\cl\Om)\), \(0\le\chi_\eps\le1\), and \(\chi_\eps\) equals \(1\) on \(\partial\Om\) and \(0\) on
\(\{d\ge\eps\}\). Testing weak convergence with \(\chi_\eps\) and using the boundary strip estimate \eqref{eq:strip}, we obtain
\[
 \begin{aligned}
  \nu(\partial\Om)
  &\le\int_{\cl\Om}\chi_\eps\dd\nu
  =\lim_{j\to\infty}\int_{\cl\Om}\chi_\eps\dd\nu_j
  \le C_\Om\eps,\\
  \Pi(\partial\Om)
  &\le\int_{\cl\Om}\chi_\eps\dd\Pi
  =\lim_{j\to\infty}\int_{\cl\Om}\chi_\eps\dd\Pi_j
  \le C_\Om K\eps.
 \end{aligned}
\]
Here \(\eps\) is fixed before \(j\to\infty\). Letting \(\eps\downarrow0\) gives
\(\nu(\partial\Om)=\Pi(\partial\Om)=0\). Finally, testing weak convergence with the constant function \(1\) yields
\[
 \nu(\cl\Om)=\lim_{j\to\infty}\nu_j(\cl\Om)=1.
\]
Thus \(\nu(\Om)=\nu(\cl\Om)-\nu(\partial\Om)=1\).
\end{proof}

From now on, fix the common subsequence and its weak limits \(\nu,\Pi\) given by Lemma~\ref{lem:boundary-zero}.

\begin{lemma}\label{lem:pressureupper}
Let \(\{a_i\}\) be the set of all atoms of \(\nu\), namely the points for which \(p_i:=\nu(\{a_i\})>0\), and set \(\tau_i:=\Pi(\{a_i\})\). Then
\begin{equation}\label{eq:pressureupper}
 \Pi=\sum_i\tau_i\delta_{a_i},\qquad 0\le\tau_i\le\frac{p_i^2}{4\pi}.
\end{equation}
The set of atoms is at most countable.
\end{lemma}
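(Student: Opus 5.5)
The plan is to compare \(\Pi\) locally with \(\nu\) by means of the local exponential integrability estimate of Lemma~\ref{lem:jensen}, in the form \eqref{eq:pressure-split} of Remark~\ref{rem:global-lowheight}, and then to apply a differentiation-of-measures argument. Fix \(x_0\in\Om\) and \(r>0\) with \(B_{2r}(x_0)\Subset\Om\), and choose \(\bar p>\nu(\cl B_{2r}(x_0))\). For every \(0<L<4\pi/\bar p\), the local version of \eqref{eq:pressure-split} with \(D=B_r(x_0)\) and \(\psi=1\) gives
\[
 \Pi_j(B_r(x_0))\le C\theta_j+\frac1L\,\nu_j(B_r(x_0)).
\]
Weak convergence gives lower semicontinuity on open sets for \(\Pi\) and upper semicontinuity on closed sets for \(\nu\). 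Combining these with \(\theta_j\to0\) and letting \(L\uparrow4\pi/\bar p\), I expect
\[
 \Pi(B_r(x_0))\le\frac{\bar p}{4\pi}\,\nu(\cl B_r(x_0)).
\]
Since \(\nu(\Om)=1\), one may always take \(\bar p=2\). This yields the uniform bound \(\Pi(B_r(x_0))\le\frac1{2\pi}\nu(\cl B_r(x_0))\) for all admissible balls.

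Second, I would deduce \(\Pi\ll\nu\) on \(\Om\). Let \(A\subset\Om\) be compact with \(\nu(A)=0\). For every \(\delta>0\), outer regularity gives an open set \(U\supset A\) with \(\nu(U)<\delta\). Cover \(A\) by balls \(B_r(x)\) with \(x\in A\), \(B_{2r}(x)\Subset\Om\) and \(\cl B_r(x)\subset U\). The Besicovitch covering theorem extracts boundedly many disjoint subfamilies, so \(\Pi(A)\le C\nu(U)\le C\delta\). Inner regularity then gives \(\Pi(A)=0\) for every Borel set \(A\subset\Om\) with \(\nu(A)=0\). Together with \(\Pi(\partial\Om)=0\) from Lemma~\ref{lem:boundary-zero}, this shows that \(\Pi=f\nu\) with \(f\ge0\) Borel.

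Third, I would bound the density \(f\). Write \(p(x_0)=\nu(\{x_0\})\). As \(r\downarrow0\), we have \(\nu(\cl B_{2r}(x_0))\downarrow p(x_0)\). Hence for every \(\eps>0\) and all small \(r\) we may take \(\bar p=p(x_0)+\eps\), which gives
\[
 \limsup_{r\to0}\frac{\Pi(B_r(x_0))}{\nu(\cl B_r(x_0))}\le\frac{p(x_0)}{4\pi}.
\]
The Besicovitch differentiation theorem identifies \(f(x_0)\) with this limit for \(\nu\)-a.e.\ \(x_0\). To reconcile open and closed balls, I would restrict to the cofinal set of radii with \(\nu(\partial B_r(x_0))=0\); all but countably many radii qualify. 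Consequently \(f\le p/4\pi\) \(\nu\)-a.e.

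Since \(p=0\) off the atoms, the measure \(\Pi\) vanishes on the complement of \(\{a_i\}\). This gives \(\Pi=\sum_i\tau_i\delta_{a_i}\). Moreover \(\tau_i=f(a_i)p_i\le p_i^2/4\pi\), because each atom has positive \(\nu\)-measure and therefore the a.e.\ bound holds at it. The atoms form a countable set because each has positive mass and \(\nu(\Om)=1\): at most \(k\) atoms can have mass exceeding \(1/k\).

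The main obstacle is the measure-theoretic bookkeeping in the second and third steps. One must handle the open-versus-closed ball mismatch in the weak limits, which is why the radii are restricted to those with null boundaries. One must also ensure that the constant in the local Jensen estimate is uniform in \(r\) and \(x_0\). This uniformity is not actually needed: in each limit we fix \(L\) and \(r\) and first send \(j\to\infty\), so the constants disappear together with \(\theta_j\).
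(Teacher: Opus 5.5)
Your proof is correct, but it takes a different route from the paper's.

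\textbf{The paper's route.} The paper tests \eqref{eq:pressure-split} against arbitrary nonnegative \(\psi\in C_c(B_r(x_0))\), and in the global version of Remark~\ref{rem:global-lowheight} against \(\psi\in C(\cl\Om)\). Passing to the limit then gives inequalities between measures rather than between numbers:
\[
 \Pi|_{B_r(x_0)}\le\frac{\nu(\cl B_{2r}(x_0))}{4\pi}\,\nu|_{B_r(x_0)},
 \qquad \Pi\le\frac{\nu}{4\pi}.
\]
The second inequality gives \(\Pi\ll\nu\) at once, with density \(f\le1/(4\pi)\). The first shows \(f\le\nu(\cl B_{2r}(x_0))/(4\pi)\) \(\nu\)-a.e.\ on \(B_r(x_0)\). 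Running over the countable family of rational balls and shrinking the radius yields \(f(x)\le\nu(\{x\})/(4\pi)\) outside a single \(\nu\)-null set. No open/closed-ball mismatch arises.

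\textbf{Your route.} Because you test only with \(\psi=1\), you get numerical inequalities for individual balls. The Portmanteau theorem puts an open ball on the left and a closed ball on the right. You therefore need the Besicovitch covering theorem to obtain \(\Pi\ll\nu\), and the Besicovitch differentiation theorem to bound \(f\) pointwise. This is standard and your bookkeeping works, including the restriction to radii with \(\nu(\partial B_r(x_0))=0\), which is legitimate once \(\Pi\ll\nu\) is known.

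One detail should be tidied. The covering theorem is applied to disjoint closed balls, so first upgrade your bound to
\[
 \Pi(\cl B_r(x_0))\le\frac{\bar p}{4\pi}\,\nu(\cl B_r(x_0)).
\]
This follows by applying your inequality on \(B_{r'}(x_0)\) and letting \(r'\downarrow r\), with \(\bar p>\nu(\cl B_{2r'}(x_0))\) still satisfied for \(r'\) close to \(r\).

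\textbf{Comparison.} The paper's approach is more elementary. It needs no covering or differentiation theorem, only the fact that a measure inequality on a ball bounds the Radon--Nikodym density a.e.\ on that ball. In your write-up, Step 2 can be replaced outright by the global version of \eqref{eq:pressure-split} tested against continuous functions on \(\cl\Om\). With that change, the Besicovitch differentiation step in Step 3 is the only heavy tool left.
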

\begin{proof}
Fix \(B_{2r}(x_0)\Subset\Om\), \(\bar p>\nu(\cl B_{2r}(x_0))\), and
\(0<L<4\pi/\bar p\). For a nonnegative \(\psi\in C_c(B_r(x_0))\), letting \(j\to\infty\) in \eqref{eq:pressure-split} gives
\[
 \int\psi\dd\Pi\le L^{-1}\int\psi\dd\nu.
\]
Letting first \(L\uparrow4\pi/\bar p\) and then \(\bar p\downarrow\nu(\cl B_{2r}(x_0))\), we obtain the local measure inequality
\begin{equation}\label{eq:local-pressure-domination}
 \Pi|_{B_r(x_0)}
 \le\frac{\nu(\cl B_{2r}(x_0))}{4\pi}\,\nu|_{B_r(x_0)}.
\end{equation}
This argument also covers the case in which the coefficient on the right is zero. Similarly, in the global version of \eqref{eq:pressure-split}, letting first \(j\to\infty\) and then \(L\uparrow4\pi\) yields \(\Pi\le\nu/(4\pi)\). Thus \(\Pi\ll\nu\). By the Radon--Nikodym theorem, we may write
\(\Pi=f\nu\), where \(f\) is a nonnegative Borel function and \(0\le f\le1/(4\pi)\) holds \(\nu\)-almost everywhere.

To make all local upper bounds hold outside a single null set, consider the countable family of balls
\[
 \mathcal B=\{B_r(x_0):x_0\in\mathbb Q^2,\ r\in\mathbb Q,
 \ r>0,\ \cl B_{2r}(x_0)\subset\Om\}.
\]
For each \(B=B_r(x_0)\in\mathcal B\), \eqref{eq:local-pressure-domination} implies
\(f\le c_B:=\nu(\cl B_{2r}(x_0))/(4\pi)\) \(\nu\)-almost everywhere on \(B\).
Indeed, applying the measure inequality to \(F_m=\{x\in B:f(x)>c_B+1/m\}\) gives \(\nu(F_m)=0\).
Taking the union over all \(B\in\mathcal B\) and \(m\ge1\), we obtain a \(\nu\)-null set \(N\) outside which all these bounds hold simultaneously.

Fix \(x\notin N\). Choose rational numbers \(r_n\downarrow0\) and points \(x_n\in\mathbb Q^2\) such that
\(3r_n<\dist(x,\partial\Om)\) and \(|x_n-x|<r_n/2\). Then
\(x\in B_{r_n}(x_n)\in\mathcal B\), and
\[
 0\le f(x)\le\frac{\nu(\cl B_{2r_n}(x_n))}{4\pi}
 \le\frac{\nu(\cl B_{3r_n}(x))}{4\pi}
 \longrightarrow\frac{\nu(\{x\})}{4\pi}.
\]
The last step uses continuity from above for finite measures. Thus \(f\) vanishes almost everywhere away from the atoms.

The set \(\mathcal A=\{x:\nu(\{x\})>0\}\) is at most countable: for each \(m\ge1\), there are at most \(m\) points with mass at least \(1/m\). Hence
\[
 \Pi(\Om\setminus\mathcal A)=\int_{\Om\setminus\mathcal A}f\dd\nu=0,
 \qquad \Pi=\sum_i\tau_i\delta_{a_i}.
\]
Every atom lies outside \(N\), so
\(0\le\tau_i=f(a_i)p_i\le p_i^2/(4\pi)\), proving \eqref{eq:pressureupper}.
\end{proof}

\subsection{An integral identity from the Green representation formula}
Lemma~\ref{lem:pressureupper} bounds the mass of \(\Pi\) at each atom of \(\nu\). To determine the exact value, we derive an integral identity from the Green representation formula and apply it near an atom.
For \(X\in C_c^\infty(\Om;\R^2)\) and a finite nonnegative Borel measure \(\mu\) on \(\Om\), define the following quantities, with \(K_X\) defined only for \(x\ne y\):
\begin{equation}\label{eq:stresskernels}
 K_X(x,y):=\frac{(X(x)-X(y))\cdot(x-y)}{|x-y|^2},
\end{equation}
\begin{equation}
 B_X(\mu):=\iint_{\Omega\times\Omega} X(x)\cdot\nabla_xH(x,y)\dd\mu(x)\dd\mu(y).
\end{equation}
Here \(\Lip X=\sup_{x\ne y}|X(x)-X(y)|/|x-y|\), so \(|K_X|\le\Lip X\). When \(X\) is compactly supported, the integrand in \(B_X\) is bounded and continuous. For fixed \(j\), the measure \(\nu_j\) is absolutely continuous, and the diagonal has zero \(\nu_j\otimes\nu_j\) measure. The value assigned to \(K_X\) on the diagonal therefore does not affect the integral. The limiting measure may have atoms, whose contribution on the diagonal must be treated separately.

\begin{lemma}\label{lem:stress}
For every \(j\),
\begin{equation}\label{eq:stress}
 \frac1{4\pi}\iint_{\Omega\times\Omega} K_X\dd\nu_j\dd\nu_j+B_X(\nu_j)
 =\frac12\int_{\Omega}\diver X\dd\Pi_j.
\end{equation}
The identity also holds for compactly supported Lipschitz vector fields, with divergence understood almost everywhere.
\end{lemma}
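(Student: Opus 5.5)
The plan is to obtain \eqref{eq:stress} by computing \(\int_\Om\rho_j\,X\cdot\nabla u_j\dd x\) in two different ways and then dividing by \(M_j^2\). Fix \(j\) and drop the index. Since \(u\in C^2(\cl\Om)\), the density \(\rho\) is bounded and continuous on \(\cl\Om\), and \(X\) has compact support in \(\Om\).

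\emph{First computation, via the auxiliary function \(P\).} Recall from Section~\ref{sec:notation} that \(\nabla P=2\rho\nabla u\). Integrating by parts, with no boundary terms because \(\supp X\Subset\Om\), gives
\[
 \int_\Om\rho\,X\cdot\nabla u\dd x=\frac12\int_\Om X\cdot\nabla P\dd x=-\frac12\int_\Om P\,\diver X\dd x .
\]
This also holds when \(X\) is compactly supported and Lipschitz, since then \(X\in W^{1,\infty}\) and \(P\in C^1\).

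\emph{Second computation, via the Green representation.} Differentiate \(u=\int G(\cdot,y)\rho(y)\dd y\) under the integral sign; this is legitimate because \(|\nabla_xG(x,y)|\le C_K/|x-y|\) on \(\supp X\) by \eqref{eq:greenbounds} and \(\rho\) is bounded. By \eqref{eq:green},
\[
 \nabla_xG(x,y)=-\frac{x-y}{2\pi|x-y|^2}-\nabla_xH(x,y).
\]
Substituting this gives
\[
 \int\rho X\cdot\nabla u
 =-\frac1{2\pi}\iint\frac{X(x)\cdot(x-y)}{|x-y|^2}\rho(x)\rho(y)\dd x\dd y-\iint X(x)\cdot\nabla_xH(x,y)\rho(x)\rho(y)\dd x\dd y .
\]
The first double integral is absolutely convergent, because its integrand is bounded by \(\|X\|_\infty\|\rho\|_\infty^2|x-y|^{-1}\) on a bounded set. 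Fubini's theorem therefore allows us to interchange \(x\) and \(y\) and average the two expressions. Since the factor \(\frac{x-y}{|x-y|^2}\) is odd under this swap, the integrand becomes \(\tfrac12 K_X(x,y)\rho(x)\rho(y)\). The first term is thus \(-\frac1{4\pi}\iint K_X\rho\rho\), and the second term is \(-M^2B_X(\nu)\).

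Equating the two computations and dividing by \(M^2\) gives \eqref{eq:stress}. For this step we use \(\rho\dd x=M\dd\nu\) and \(P\dd x=M^2\dd\Pi\), which follow from \eqref{eq:normalizations}.

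The only delicate point is the symmetrization of the singular kernel. I would justify it exactly as above, by absolute integrability of \(|x-y|^{-1}\) against the bounded density \(\rho\otimes\rho\). Note that absolute integrability is needed, not merely \(|K_X|\le\Lip X\), because the unsymmetrized integrand is not bounded. For Lipschitz \(X\) the argument goes through verbatim. Alternatively, one may mollify \(X\): then \(K_{X_\eps}\to K_X\) off the diagonal with \(|K_{X_\eps}|\le\Lip X\), \(\diver X_\eps\to\diver X\) almost everywhere with a uniform bound, and \(\nabla H\) is bounded on the supports. Dominated convergence then passes to the limit in every term, using that \(\nu_j\otimes\nu_j\) gives zero mass to the diagonal.
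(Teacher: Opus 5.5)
Your proof is correct and follows the paper's argument: you compute \(\int_\Om\rho_jX\cdot\nabla u_j\) once by integrating \(\nabla P_j=2\rho_j\nabla u_j\) by parts and once through the differentiated Green representation, symmetrize the absolutely integrable singular term with Fubini, and divide by \(M_j^2\). The only minor difference is in the Lipschitz case: you note that the direct computation already applies because \(X\in W^{1,\infty}\) and no other step uses smoothness, whereas the paper mollifies \(X\) and passes to the limit by dominated convergence, which you also give as an alternative.
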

\begin{proof}
First suppose that \(X\in C_c^\infty(\Om;\R^2)\). The chain rule gives
\(\nabla P_j=2\theta_ju_je^{u_j^2}\nabla u_j=2\rho_j\nabla u_j\). Hence
    \begin{equation}\label{eq:intpho_jXcdotnablau_j}
    	\int_\Om\rho_jX\cdot\nabla u_j
    	=\frac12\int_\Om X\cdot\nabla P_j
    	=-\frac12\int_\Om P_j\diver X.
    \end{equation}
Differentiating the Green representation formula and using \eqref{eq:green}, we obtain
\[
 \nabla u_j(x)=-\frac1{2\pi}\int_\Om\frac{x-y}{|x-y|^2}\rho_j(y)\dd y
 -\int_\Om\nabla_xH(x,y)\rho_j(y)\dd y.
\]
For fixed \(j\), \(\rho_j\) is bounded, the kernel \(|x-y|^{-1}\) is locally integrable in dimension two, and the regular term is uniformly bounded for
\(x\in\supp X\). Thus the double integrals below converge absolutely, and Fubini's theorem applies.
Interchanging \(x,y\) in the singular term gives
\[
 \begin{aligned}
 &\iint_{\Omega\times\Omega}\frac{X(x)\cdot(x-y)}{|x-y|^2}\rho_j(x)\rho_j(y)\dd x\dd y\\
 &\qquad=-\iint_{\Omega\times\Omega}\frac{X(y)\cdot(x-y)}{|x-y|^2}\rho_j(x)\rho_j(y)\dd x\dd y\\
 &\qquad=\frac12\iint_{\Omega\times\Omega} K_X(x,y)\rho_j(x)\rho_j(y)\dd x\dd y.
 \end{aligned}
\]
The last equality is obtained by averaging the first two expressions. Therefore
    \begin{align}
    	\int_\Om\rho_jX\cdot\nabla u_j
    	&=-\frac1{4\pi}\iint_{\Omega\times\Omega} K_X\rho_j(x)\rho_j(y)\dd x\dd y\\
    	&\qquad-\iint_{\Omega\times\Omega} X(x)\cdot\nabla_xH(x,y)\rho_j(x)\rho_j(y)\dd x\dd y.
    \end{align}
Comparing this with the integration-by-parts identity \eqref{eq:intpho_jXcdotnablau_j}, cancelling the minus signs, and dividing by \(M_j^2\) proves \eqref{eq:stress}.

For a compactly supported Lipschitz vector field \(X\), extend it by zero and mollify to obtain \(X_\ell\). The identity \eqref{eq:stress} holds for \(X_\ell\). Their supports can be kept in a fixed compact subset of \(\Om\), with \(X_\ell\to X\) uniformly, \(\Lip X_\ell\le\Lip X\), and derivatives converging almost everywhere. For \(x\ne y\), we have \(K_{X_\ell}(x,y)\to K_X(x,y)\) and \(|K_{X_\ell}|\le\Lip X\). Dominated convergence therefore extends \eqref{eq:stress} to Lipschitz vector fields.
\end{proof}

\subsection{Atomic mass relations and existence of atoms}
\begin{proposition}\label{prop:square}
The weak limit \(\Pi\) is given by
\begin{equation}\label{eq:square}
 \Pi=\frac1{4\pi}\sum_i p_i^2\delta_{a_i}.
\end{equation}
\end{proposition}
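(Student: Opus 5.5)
The plan is to prove the lower bound \(\tau_i\ge p_i^2/(4\pi)\) at each atom. Combined with the upper bound \eqref{eq:pressureupper} of Lemma~\ref{lem:pressureupper}, this gives \(\tau_i=p_i^2/(4\pi)\), and \eqref{eq:square} follows because \(\Pi\) is purely atomic by that lemma. To get the lower bound, I will test the identity \eqref{eq:stress} of Lemma~\ref{lem:stress} with a localized dilation field centred at the atom, let \(j\to\infty\), and then shrink the localization.

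Fix an atom \(a=a_i\) and two radii \(0<r<\varrho/2\) with \(B_{2\varrho}(a)\Subset\Om\). Choose a cutoff \(\psi\in C_c^\infty(B_{2r}(a))\) with \(0\le\psi\le1\), \(\psi\equiv1\) on \(B_r(a)\) and \(|\nabla\psi|\le C/r\), and set \(X(x)=(x-a)\psi(x)\). Then \(\Lip X\le C\), \(|X|\le2r\), and \(\diver X=2\psi+(x-a)\cdot\nabla\psi\) is continuous and bounded.

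\emph{Right side.} The term \(\tfrac12\int\diver X\dd\Pi_j\) converges to \(\tfrac12\int\diver X\dd\Pi\). Since \(\Pi\) is atomic, this limit equals \(\tau_i+O(\Pi(B_{2r}(a)\setminus\{a\}))\), and the error vanishes as \(r\to0\).

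\emph{Regular-part term.} The integrand of \(B_X(\nu_j)\) is bounded by \(2r\,C_K\) by \eqref{eq:greenbounds}. Hence \(|B_X(\nu_j)|\le Cr\), uniformly in \(j\).

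\emph{Singular term.} This is the main step. Write \(A=B_r(a)\), \(D=B_\varrho(a)\setminus B_r(a)\) and \(F=\Om\setminus B_\varrho(a)\), and split \(\Om\times\Om\) accordingly.
\begin{itemize}
\item On \(A\times A\), \(K_X\equiv1\), so this piece contributes \(\nu_j(A)^2\).
\item On \(F\times F\), \(X\) vanishes at both points, so \(K_X=0\).
\item On \(A\times F\) and \(F\times A\), only one endpoint is in \(\supp X\). There \(|K_X|\le|x-a|/|x-y|\le2r/(\varrho-2r)\), so this piece contributes \(O(r/\varrho)\) because \(\nu_j(\Om)=1\).
\item Every remaining piece involves \(D\) in at least one factor. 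Since \(|K_X|\le\Lip X\le C\), these contribute at most \(2C\nu_j(D)\).
\end{itemize}

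Take \(j\to\infty\) along radii with \(\nu(\partial B_r(a))=\nu(\partial B_\varrho(a))=0\). Then \(\nu_j(A)\to\nu(B_r(a))\ge p_i\) and \(\nu_j(D)\to\nu(D)\). The limit of \eqref{eq:stress} therefore yields
\[
 \frac1{4\pi}\Bigl(p_i^2-C\nu(D)-C\frac r\varrho\Bigr)-Cr\le\tau_i+o_{r\to0}(1).
\]
Now let \(r\to0\) first with \(\varrho\) fixed, so that \(r/\varrho\to0\). Then \(\nu(D)\to\nu(B_\varrho(a)\setminus\{a\})\). Finally let \(\varrho\to0\); this quantity tends to \(0\) by continuity from above. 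The result is \(\tau_i\ge p_i^2/(4\pi)\).

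The hardest point is the singular term. The kernel \(K_X\) is discontinuous on the diagonal, and \(\nu_j\otimes\nu_j\) concentrates there in the limit. Moreover, a naive expansion of \(K_X\) misses the cross contributions from far-away \(y\). The three-scale splitting \(r\ll\varrho\) is meant to handle both issues. It avoids any weak convergence of \(K_X\,\nu_j\otimes\nu_j\) and uses only the convergence of masses of balls together with the bounds \(|K_X|\le\Lip X\) and \(|K_X|\le r/\varrho\) on the separated region.
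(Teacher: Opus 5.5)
Your proof is correct and follows essentially the same route as the paper: you test \eqref{eq:stress} with a localized dilation field \((x-a)\chi(|x-a|/\eps)\), split the singular term into an inner ball where \(K_X\equiv1\), an annulus controlled by \(\Lip X\), and a separated region where \(K_X\) is small, and bound the regular term by \(\|X\|_\infty\). The differences are cosmetic. The paper puts a shrinking inner radius \(\eta\) inside the cutoff scale instead of adding an outer buffer radius \(\varrho\). Its estimate is also two-sided, so it obtains \(\tau=p^2/(4\pi)\) directly rather than combining a lower bound with the upper bound from Lemma~\ref{lem:pressureupper}.
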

\begin{proof}
Fix an atom \(a\) of \(\nu\), and write \(p=\nu(\{a\})>0\), \(\tau=\Pi(\{a\})\).
Choose \(\chi\in C_c^\infty([0,\infty))\) equal to \(1\) on \([0,1]\) and to \(0\) on \([2,\infty)\), and set
\[
 X_\eps(x)=(x-a)\chi(|x-a|/\eps).
\]
For sufficiently small \(\eps\), this vector field is compactly supported in \(\Om\). Direct differentiation gives
\(\|X_\eps\|_\infty\le C\eps\) and \(\Lip X_\eps\le C\), with \(C\) independent of \(\eps\).
If \(x,y\in B_\eps(a)\) and \(x\ne y\), then \(X_\eps(x)-X_\eps(y)=x-y\), so \(K_{X_\eps}(x,y)=1\).

To estimate the singular term, take \(0<\eta<\eps\) with \(\nu(\partial B_\eta(a))=\nu(\partial B_{2\eps}(a))=0\). This excludes at most countably many radii. Write
\(I_{j,\eps}=\iint_{\Omega\times\Omega} K_{X_\eps}\dd\nu_j\dd\nu_j\).
On \(B_\eta(a)^2\), the kernel \(K_{X_{\varepsilon}}\) equals \(1\), and the integral is exactly \(\nu_j(B_\eta(a))^2\).
When at least one variable belongs to \(B_{2\eps}(a)\setminus B_\eta(a)\),
the bounds \(|K_{X_\eps}|\le C\) and \(\nu_j(\Om)=1\) show that the absolute value of the contribution is at most
\(2C\nu_j(B_{2\eps}(a)\setminus B_\eta(a))\).
If \(x\in B_\eta(a)\) and \(y\notin B_{2\eps}(a)\), then
\[
 |K_{X_\eps}(x,y)|
 =\left|\frac{(x-a)\cdot(x-y)}{|x-y|^2}\right|
 \le\frac{\eta}{2\eps-\eta}.
\]
The same bound holds after interchanging the variables. When both variables lie outside \(B_{2\eps}(a)\), the kernel vanishes. Thus
\[
 \left|I_{j,\eps}-\nu_j(B_\eta(a))^2\right|
 \le2C\nu_j(B_{2\eps}(a)\setminus B_\eta(a))
   +\frac{2\eta}{2\eps-\eta}.
\]
Letting first \(j\to\infty\) and then \(\eta\downarrow0\), we obtain
\[
 \limsup_j|I_{j,\eps}-p^2|
 \le2C\bigl(\nu(B_{2\eps}(a))-p\bigr).
\]
Finally, let \(\eps\downarrow0\) through admissible radii; the right-hand side tends to zero.

For the regular term in \eqref{eq:stress}, \eqref{eq:greenbounds} gives
\[
 |B_{X_\eps}(\nu_j)|\le C\|X_\eps\|_\infty\nu_j(\Om)^2\le C\eps.
\]
For fixed \(\eps\), the function \(\diver X_\eps\) is continuous and compactly supported, so weak convergence of \(\Pi_j\) yields
\(\int\diver X_\eps\dd\Pi_j\to\int\diver X_\eps\dd\Pi\). Moreover,
\[
 \diver X_\eps(x)
 =2\chi(|x-a|/\eps)+\frac{|x-a|}{\eps}\chi'(|x-a|/\eps),
\]
This expression is uniformly bounded in absolute value, equals \(2\) at \(a\), and eventually vanishes at each \(x\ne a\).
Dominated convergence therefore gives
\(\frac12\int\diver X_\eps\dd\Pi\to\Pi(\{a\})=\tau\).
Substituting these limits into \eqref{eq:stress} yields \(p^2/(4\pi)=\tau\).
Applying this equality to every atom and using the atomic decomposition from Lemma~\ref{lem:pressureupper} proves \eqref{eq:square}.
\end{proof}

\begin{proposition}\label{prop:atom}
At least one atom satisfies \(p_i\ge G_*^{-1}>0\), where \(G_*\) is defined in \eqref{eq:v-a-h-g-G}.
\end{proposition}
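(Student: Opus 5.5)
We will divide the integral identity \eqref{eq:moment} by \(M_j^2\) and pass to the limit using the weak limits \(\nu,\Pi\) together with Proposition~\ref{prop:square}. Dividing \eqref{eq:moment} by \(M_j^2\) and discarding the nonnegative Fourier sum gives
\[
 1\le4\pi\int_\D g\,\frac{P_{\D,j}}{M_j^2}\dd z
 =4\pi\int_\Om (g\circ\phi^{-1})\dd\Pi_j .
\]
Here we use that \(P_{\D,j}\dd z\) is the pullback of \(P_j\dd x\) under the conformal map. The function \(\tilde g:=g\circ\phi^{-1}\) is continuous on \(\cl\Om\), because \(\phi\) extends smoothly up to the boundary for a smooth simply connected domain (Kellogg--Warschawski). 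It is bounded above by \(G_*\).

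Since \(\tilde g\in C(\cl\Om)\), weak convergence \(\Pi_j\weak\Pi\) on \(\cl\Om\) (Lemma~\ref{lem:boundary-zero}) lets us pass to the limit:
\[
 1\le4\pi\int_{\cl\Om}\tilde g\dd\Pi .
\]
By Lemma~\ref{lem:boundary-zero}, \(\Pi(\partial\Om)=0\). By Proposition~\ref{prop:square}, \(\Pi=\frac1{4\pi}\sum_ip_i^2\delta_{a_i}\). Therefore
\[
 1\le\sum_i p_i^2\,\tilde g(a_i)\le G_*\sum_ip_i^2\le G_*\bigl(\max_ip_i\bigr)\sum_ip_i\le G_*\max_ip_i,
\]
using \(\sum_ip_i\le\nu(\Om)=1\).

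This chain has two consequences. First, the set of atoms is nonempty: otherwise \(\Pi=0\) and the right-hand side vanishes, which is a contradiction. Second, \(\max_ip_i\ge G_*^{-1}\). Since the \(p_i\) are summable and every \(p_i\le1\), the supremum is attained at some atom. This is exactly the claim.

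The only delicate point is the passage to the limit. It needs \(\tilde g\) to be continuous up to \(\partial\Om\), so that it is a legitimate test function for weak convergence on \(\cl\Om\). This follows from the smoothness of \(\phi\) on \(\cl\D\), which makes \(h=\log|\phi'|\) smooth on \(\cl\D\) (note \(\phi'\ne0\) there). If one prefers to avoid boundary regularity, the strip estimate \eqref{eq:strip} offers an alternative. It shows that \(\Pi_j\) gives mass \(O(\eps)\) to \(\{d<\eps\}\), uniformly in \(j\), so cutting off near the boundary costs only \(O(\eps)\) and a bounded \(\tilde g\) suffices.
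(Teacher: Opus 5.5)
Your proof is correct and follows the paper's argument: divide \eqref{eq:moment} by \(M_j^2\), pass to the limit using Proposition~\ref{prop:square} and \(\Pi(\partial\Om)=0\), and bound \(\sum_i g(\phi^{-1}(a_i))p_i^2\le G_*\max_ip_i\). The one difference is that you discard the nonnegative Fourier sum before taking the limit, so you can skip the paper's uniform decay bound \(|q_{n,j}|\le C/(n+1)\), which the paper derives from the strip estimate in order to pass to the limit in the full identity; the paper obtains the exact limiting identity \eqref{eq:momentlimit} but only uses that its left-hand side is at least \(1\), so your inequality suffices.
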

\begin{proof}
Pull the normalized measures back to \(\D\), writing \(\nu_j^\D=(\phi^{-1})_\#\nu_j\) and \(\Pi_j^\D=(\phi^{-1})_\#\Pi_j\). For example, \(\nu_j^\D(B)=\nu_j(\phi(B))\) for each Borel set \(B\subset\D\). Define \(\nu^\D,\Pi^\D\) similarly. Set \(q_{n,j}=\int_\D z^n\dd\nu_j^\D\). We first estimate their uniform decay as \(n\) increases.

The map \(\phi\) is Lipschitz on the closed disk. For \(z\ne0\), comparison with the boundary point \(\phi(z/|z|)\) gives
\[
 \dist(\phi(z),\partial\Om)\le C\lvert\phi(z)-\phi(z/\lvert z\rvert)\rvert\leq C(1-|z|).
\]
Hence Lemma~\ref{lem:strip} implies \(\nu_j^\D\{|z|>t\}\le C(1-t)\) for \(t\) close to \(1\).
For \(t\) bounded away from \(1\), the same estimate follows from unit total mass after enlarging the constant.
Since \(|z|^n=\int_0^1nt^{n-1}\ind_{\{|z|>t\}}\dd t\), Tonelli's theorem gives
\[
 |q_{n,j}|\le\int_\D |z|^n\dd\nu_j^\D
 =\int_0^1 nt^{n-1}\nu_j^\D\{|z|>t\}\dd t
 \le\frac C{n+1}.
\]
Thus the tails of \(\sum_{n\ge1}|q_{n,j}|^2\) tend to zero uniformly in \(j\). For fixed \(n\), \(q_{n,j}\) converges as \(j\to\infty\). Dividing \eqref{eq:moment} by \(M_j^2\), using \eqref{eq:square}, and passing to the limit, we obtain
\begin{equation}\label{eq:momentlimit}
 1+2\sum_{n\ge1}\left|\int_\D z^n\dd\nu^\D\right|^2
 =\sum_i g(\phi^{-1}(a_i))p_i^2
 \le G_*\max_i p_i.
\end{equation}
The equality, whose left-hand side is at least \(1\), first shows that the set of atoms is nonempty. Since \(\sum_i p_i\le1\), only finitely many masses exceed \((\sup_i p_i)/2\), so the maximum is attained. The final inequality uses \(\sum_i p_i^2\le(\max_i p_i)\sum_i p_i\) and gives the stated lower bound.
\end{proof}

\subsection{Distribution of values near an atom}
We determine the distribution of \(s_j\) near an atom by first letting \(j\to\infty\) and then shrinking its neighborhood.
\begin{proposition}\label{prop:height}
Fix an atom \(a\) of mass \(p>0\), and set \(A=4\pi/p\). For every \(\eps>0\),
\begin{equation}\label{eq:height}
 \lim_{r\downarrow0}\limsup_{j\to\infty}
 \nu_j\bigl(B_r(a)\cap\{|s_j-A|>\eps\}\bigr)=0.
\end{equation}
In this limit, \(r\) may be restricted to a sequence tending to zero with \(\nu(\partial B_r(a))=\Pi(\partial B_r(a))=0\). In addition,
\begin{equation}\label{eq:TVheight}
 \lim_{r\downarrow0}\limsup_j
 \left\|\Pi_j-\frac1A\nu_j\right\|_{\TV(B_r(a))}=0.
\end{equation}
The total variation norm is defined in Section~\ref{sec:notation}.
\end{proposition}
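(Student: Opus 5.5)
The plan is to compare two ways of measuring the mass of \(B_r(a)\): through \(\nu_j\) and through \(\Pi_j\), with density ratio \(\dd\Pi_j/\dd\nu_j=(1-e^{-u_j^2})/s_j\) from \eqref{eq:ratios}. We know the limits exactly: by Proposition~\ref{prop:square}, for admissible radii (\(\nu(\partial B_r(a))=\Pi(\partial B_r(a))=0\)), \(\nu_j(B_r(a))\to\nu(B_r(a))\) and \(\Pi_j(B_r(a))\to\Pi(B_r(a))\). As \(r\downarrow0\) these tend to \(p\) and \(p^2/(4\pi)=p/A\), respectively. So on average \(1/s_j\approx 1/A\) with respect to \(\nu_j\) near \(a\). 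The task is to show that there is no mass at low heights (giving a one-sided bound) and then to use a Chebyshev-type deficit argument for the other side.

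\emph{Step 1 (low heights).} Fix \(\delta>0\) small and set \(A_\delta=A-\delta\). Choose \(r\) so small that \(\nu(\cl B_{2r}(a))<\bar p\), where \(\bar p>p\) satisfies \(A_\delta<4\pi/\bar p\) and \(\alpha<4\pi\) is close to \(4\pi\). Lemma~\ref{lem:jensen}, \eqref{eq:lowheight}, applied with \(L=A_\delta\), gives
\[
\nu_j(B_r(a)\cap\{s_j\le A_\delta\})+\Pi_j(B_r(a)\cap\{s_j\le A_\delta\})\le C\theta_j\to0 .
\]

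\emph{Step 2 (the ratio on the high set).} On \(\{s_j\ge A_\delta\}\) we have \(u_j=s_j/M_j\ge A_\delta/M_j\to\infty\), because \(M_j\to0\) by Lemma~\ref{lem:mass}. Hence \(1-e^{-u_j^2}\to1\) uniformly there, and
\[
\frac{\dd\Pi_j}{\dd\nu_j}=\frac{1+o(1)}{s_j}\le\frac{1+o(1)}{A_\delta}.
\]
Write \(S_j=B_r(a)\cap\{s_j>A_\delta\}\). Then
\[
\int_{S_j}\Bigl(\frac1{A_\delta}-\frac1{s_j}\Bigr)\dd\nu_j
=\frac{\nu_j(S_j)}{A_\delta}-\Pi_j(S_j)+o(1)
\longrightarrow\frac{\nu(B_r(a))}{A_\delta}-\Pi(B_r(a)).
\]
As \(r\downarrow0\), the right-hand side tends to \(p/A_\delta-p/A=O(\delta)\). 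The integrand is nonnegative, and on \(\{s_j>A+\eps\}\) it is at least \(1/A_\delta-1/(A+\eps)\ge c_\eps>0\) when \(\delta<\eps/2\). Chebyshev's inequality therefore gives
\[
\limsup_{r\downarrow0}\,\limsup_j\,\nu_j\bigl(B_r(a)\cap\{s_j>A+\eps\}\bigr)\le C\delta/c_\eps .
\]
Combining this with Step~1 for the set \(\{s_j<A-\eps\}\subset\{s_j\le A_\delta\}\) and letting \(\delta\downarrow0\) proves \eqref{eq:height}. Monotonicity in \(r\) lets us restrict to admissible radii.

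\emph{Step 3 (total variation).} Split \(B_r(a)\) into three pieces: the low set \(\{s_j\le A_\delta\}\), the good set \(\{|s_j-A|\le\eps\}\), and the high set \(\{s_j>A+\eps\}\).
\begin{itemize}
\item On the low set, both \(\Pi_j\) and \(\nu_j/A\) have mass \(O(\theta_j)\) by Step~1.
\item On the good set, \(|\dd\Pi_j/\dd\nu_j-1/A|\le C\eps+o(1)\), so the contribution is at most \((C\eps+o(1))\nu_j(B_r(a))\).
\item On the high set, the ratio is bounded by \(2/A\), so the contribution is bounded by a multiple of the \(\nu_j\)-mass, which is small by Step~2.
\end{itemize}
Letting \(j\to\infty\), then \(r\downarrow0\), then \(\eps\downarrow0\) gives \eqref{eq:TVheight}.

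The main obstacle is Step~1. It relies on having local exponential integrability with exponent arbitrarily close to \(4\pi/p\), which requires choosing \(r\) (and \(\bar p\), \(\alpha\)) after \(\delta\) and keeping the order of limits straight. Once this is in place, the upper side follows from the exact identity \(\Pi(\{a\})=p^2/(4\pi)\).
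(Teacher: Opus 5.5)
Your proposal is correct and follows essentially the same route as the paper. The three steps match: the low-height bound from the local Jensen estimate \eqref{eq:lowheight} with $L$ just below $A$; a Chebyshev-type deficit argument against $\Pi(\{a\})=p^2/(4\pi)=p/A$, where your nonnegative integrand $1/A_\delta-1/s_j$ plays the role of the paper's split into $\{b<s_j\le h\}$ and $\{s_j>h\}$ with $b=A_\delta$, $h=A+\eps$; and the same three-piece decomposition for the total variation. The only bookkeeping point is that Step~3 needs $\delta\le\eps$, as in your Step~2 choice $\delta<\eps/2$, so that the low, good, and high sets cover $B_r(a)$. Also, the asymptotic form $(1+o(1))/s_j$ is unnecessary on the upper side, since $\dd\Pi_j/\dd\nu_j\le 1/s_j$ already suffices.
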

\begin{proof}
\emph{Values below \(A\).}
Fix \(0<b<A\). Since \(\nu(\cl B_{2r}(a))\downarrow p\) and \(bp<4\pi\),
we can first choose \(r\) sufficiently small and then \(\bar p>\nu(\cl B_{2r}(a))\) such that \(b\bar p<4\pi\).
Choose \(\alpha\in(b\bar p,4\pi)\). Lemma~\ref{lem:jensen} gives
\[
 \nu_j(B_r(a)\cap\{s_j\le b\})+
 \Pi_j(B_r(a)\cap\{s_j\le b\})\le C_{r,b,\bar p,\alpha,\Omega}\theta_j\to0.
\]
For \(0<\eps<A\), take \(b=A-\eps\). The \(\nu_j\) measure of \(B_r(a)\cap\{s_j<A-\eps\}\) then tends to zero in the stated order of limits.
If \(\eps\ge A\), the set \(\{s_j<A-\eps\}\) is empty because \(s_j>0\).

\emph{Values above \(A\).}
Fix \(\eps>0\), set \(h=A+\eps\), and take any \(0<b<A\). Write
\(t_{j,r}=\nu_j(B_r(a)\cap\{s_j>h\})\). By \eqref{eq:pressure-comparison}, \(\dd\Pi_j/\dd\nu_j\le1/b\) on \(\{b<s_j\le h\}\), whereas this ratio is at most \(1/h\) on \(\{s_j>h\}\).
Combining this with the estimate above on \(\{s_j\le b\}\), we obtain, for fixed sufficiently small \(r\),
\[
 \begin{aligned}
 \Pi_j(B_r(a))
 &\le o_j(1)+\frac1b\nu_j(B_r(a)\cap\{b<s_j\le h\})+\frac{t_{j,r}}h\\
 &\le o_j(1)+\frac{\nu_j(B_r(a))}b
       -\left(\frac1b-\frac1h\right)t_{j,r}.
 \end{aligned}
\]
The coefficient \(1/b-1/h\) is positive. Move the term containing \(t_{j,r}\) to the left and let first \(j\to\infty\),
then \(r\downarrow0\) through radii satisfying \(\nu(\partial B_r(a))=\Pi(\partial B_r(a))=0\). By \eqref{eq:square},
\(\Pi(\{a\})=p^2/(4\pi)=p/A\), so
\[
 \lim_{r\downarrow0}\limsup_jt_{j,r}
 \le\frac{p/b-p/A}{1/b-1/h}.
\]
Finally, let \(b\uparrow A\). The numerator tends to zero, whereas the denominator tends to \(1/A-1/h>0\); hence the right-hand side tends to zero.
Together with the estimate for \(\{s_j<A-\eps\}\), this proves \eqref{eq:height}. As \(r\) decreases, the \(\nu_j\) measure of \(B_r(a)\cap\{|s_j-A|>\eps\}\) is nonincreasing.
Consequently, the limit as \(r\downarrow0\) is zero without restricting the choice of radii.

\emph{Convergence in total variation.}
Set \(h_j=(1-e^{-u_j^2})/s_j\). By \eqref{eq:ratios},
\[
 \left\|\Pi_j-\frac1A\nu_j\right\|_{\TV(B_r(a))}
 =\int_{B_r(a)}|h_j-A^{-1}|\dd\nu_j.
\]
Fix \(b\in(0,A)\), and choose \(0<\eps<A-b\). On \(\{s_j\le b\}\), the integral is at most
\[
 \Pi_j(B_r(a)\cap\{s_j\le b\})
 +A^{-1}\nu_j(B_r(a)\cap\{s_j\le b\}),
\]
which tends to zero by the estimate above on \(\{s_j\le b\}\). On \(\{s_j>b\}\cap\{|s_j-A|>\eps\}\),
we have \(|h_j-A^{-1}|\le b^{-1}+A^{-1}\). By \eqref{eq:height}, the \(\nu_j\) mass of this set in \(B_r(a)\) tends to zero as first \(j\to\infty\) and then \(r\downarrow0\); so does the corresponding integral.
On the remaining set \(\{|s_j-A|\le\eps\}\), use \(u_j=s_j/M_j\) and \(M_j\to0\), which follows from \eqref{eq:massbounds}, to obtain
\[
 |h_j-A^{-1}|
 \le\left|\frac1{s_j}-\frac1A\right|+\frac{e^{-s_j^2/M_j^2}}{s_j}
 \le\frac{\eps}{A(A-\eps)}+\frac1b e^{-b^2/M_j^2}.
\]
Since \(\nu_j(\Om)=1\), letting first \(j\to\infty\) and then \(r\downarrow0\) bounds the upper limit of the total variation by
\(\eps/[A(A-\eps)]\). Letting \(\eps\downarrow0\) proves \eqref{eq:TVheight}.
\end{proof}

\subsection{An improved decay estimate for the total mass}\label{sec:strict}

When \(\theta_j\to0\), \eqref{eq:massbounds} shows that \(M_j^2\log(1/\theta_j)\) is bounded. We use Proposition~\ref{prop:height} to prove that this quantity tends to zero. Together with the logarithmic energy bound in Section~\ref{sec:local}, this gives \(M_j^2E_j\to0\). Both estimates will be used in Section~\ref{sec:center} to compare the rescaling parameters and control local oscillation. We first estimate the Green potential of a bounded density.

\begin{lemma}\label{lem:densitypotential}
Let \(f\in L^\infty(\Om)\), \(0\le f\le D\), and set \(\dd\eta=f\dd x\), \(m=\eta(\Om)>0\). Then its Green potential satisfies
\begin{equation}\label{eq:densitypotential}
 \sup_{y\in\Om}G\eta(y)\le\frac m{4\pi}\left[\log\frac{C_\Om D}{m}+1\right].
\end{equation}
\end{lemma}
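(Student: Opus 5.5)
We bound the Green kernel from above by the logarithmic estimate in \eqref{eq:greenbounds}, and then use a bathtub (rearrangement) argument. For fixed \(y\in\Om\),
\[
 G\eta(y)\le\frac1{2\pi}\int_\Om\log\frac{L_0}{|x-y|}\,f(x)\dd x .
\]
The weight \(\log(L_0/|x-y|)\) is positive on \(\Om\) and radially decreasing in \(|x-y|\). Among all densities with \(0\le f\le D\) and \(\int f=m\), this integral is therefore maximized by concentrating the mass as close to \(y\) as allowed. The maximizer is \(f^*=D\,\ind_{B_{r_0}(y)}\) with \(\pi r_0^2D=m\). This bathtub inequality follows by writing \(\int\psi f=\int_0^\infty\int_{\{\psi>t\}}f\dd x\dd t\) with \(\psi=\log(L_0/|x-y|)\) and noting that \(\int_{\{\psi>t\}}f\le\min\{m,D|\{\psi>t\}|\}\). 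The set \(\{\psi>t\}\) is a ball about \(y\), and we intersect it with \(\Om\) or simply drop that restriction, since enlarging the domain only increases the bound.

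Next we compute the integral for \(f^*\) in polar coordinates:
\[
 \frac{D}{2\pi}\int_{B_{r_0}(y)}\log\frac{L_0}{|x-y|}\dd x
 =D\int_0^{r_0}r\log\frac{L_0}{r}\dd r
 =\frac{Dr_0^2}{2}\left(\log\frac{L_0}{r_0}+\frac12\right).
\]
Since \(Dr_0^2=m/\pi\), this equals
\[
 \frac{m}{2\pi}\left(\frac12\log\frac{L_0^2}{r_0^2}+\frac12\right)
 =\frac{m}{4\pi}\left(\log\frac{\pi L_0^2D}{m}+1\right),
\]
which is \eqref{eq:densitypotential} with \(C_\Om=\pi L_0^2\).

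The one point needing care is when \(r_0>L_0\), that is, \(m>\pi L_0^2D\). This cannot happen, because \(m\le D|\Om|\le D\pi L_0^2\) since \(\Om\subset B_{L_0}(y)\). So \(r_0\le L_0\), the weight stays nonnegative on the ball, and the bathtub comparison is legitimate.

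The main (mild) obstacle is stating the rearrangement step cleanly. We do it directly through the layer-cake formula: for each \(t\ge0\),
\[
 \int_{\{\psi>t\}}f\le\min\{m,\,D\pi L_0^2e^{-2t}\},
\]
and integrating this bound over \(t\) reproduces exactly the computation above. Because the estimate is uniform in \(y\), taking the supremum over \(y\in\Om\) completes the argument.
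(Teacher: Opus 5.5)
Your proposal is correct and follows essentially the same route as the paper: majorize $G$ by $(2\pi)^{-1}\log(L_0/|x-y|)$ via \eqref{eq:greenbounds}, compare with the extremal density $D\ind_{B_{r}(y)}$ with $\pi r^2D=m$, and compute in polar coordinates to get $C_\Om=\pi L_0^2$, using $m\le D|\Om|$ to keep $r\le L_0$. The only difference is how the bathtub comparison is justified: you use the layer-cake formula, whereas the paper uses the one-line sign argument $\int_{\R^2}\bigl(k_y-\log(L_0/r)\bigr)(f-f_*)\dd x\le0$.
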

\begin{proof}
Extend \(f\) by zero outside \(\Om\), fix \(y\in\Om\), and write
\[
 r=\sqrt{\frac{m}{\pi D}},\qquad
 f_*(x)=D\ind_{B_r(y)}(x),\qquad
 k_y(x)=\log\frac{L_0}{|x-y|}.
\]
Both densities have total mass \(m\). On \(B_r(y)\), we have \(f-f_*\le0\) and
\(k_y-\log(L_0/r)\ge0\); outside the ball, the signs are reversed. Hence
\[
 \begin{aligned}
 \int_{\R^2}k_y(f-f_*)\dd x
 &=\int_{\R^2}\left(k_y-\log\frac{L_0}{r}\right)(f-f_*)\dd x\le0.
 \end{aligned}
\]
The equality uses \(\int(f-f_*)=0\). The logarithmic singularity is integrable for bounded densities, so these integrals are well defined.
By \eqref{eq:greenbounds}, the Green potential is bounded above by \((2\pi)^{-1}\int k_yf\),
which is no larger than the value for the {density} \(f_*\). Integration in polar coordinates gives
\[
 \begin{aligned}
 \frac D{2\pi}\int_{B_r(0)}\log\frac{L_0}{|z|}\dd z
 &=D\int_0^r t\log\frac{L_0}{t}\dd t\\
 &=D\left(\frac{r^2}2\log\frac{L_0}{r}+\frac{r^2}4\right)\\
 &=\frac m{4\pi}\left[\log\frac{\pi L_0^2D}{m}+1\right].
 \end{aligned}
\]
Since \(m/D\le|\Om|\), we may choose \(L_0\) large enough such that \(r<L_0\). Taking \(C_\Om=\pi L_0^2\) proves \eqref{eq:densitypotential}.
\end{proof}

\begin{proposition}\label{prop:strict}
Every sequence with \(\theta_j\to0\) satisfies
\begin{equation}\label{eq:strict}
 M_j^2\log(1/\theta_j)\to0,\qquad M_j^2E_j\to0,
 \qquad s_j\to0\quad\hbox{strongly in }H^1_0(\Om).
\end{equation}
\end{proposition}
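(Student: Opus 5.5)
The second and third assertions follow from the first, so the plan is to prove \(M_j^2\log(1/\theta_j)\to0\) and then deduce the rest. By \eqref{eq:logenergy}, \(M_j^2E_j\le C'_\Om M_j^2[1+\log(1/\theta_j)]\), and \(M_j\to0\) by Lemma~\ref{lem:mass}. Since \(\|\nabla s_j\|_{L^2}^2=M_j^2E_j\), the third assertion follows from the second.

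For the first assertion I argue by contradiction. Suppose that along a subsequence \(M_j^2\log(1/\theta_j)\to c\in(0,\infty)\); boundedness comes from \eqref{eq:massbounds}. Fix an atom \(a\) of mass \(p\), which exists by Proposition~\ref{prop:atom}, and set \(A=4\pi/p\). For small \(\eps>0\) and small \(r\), split \(\nu_j|_{B_r(a)}\) into two pieces. The good piece is \(\eta_j=\nu_j|_{B_r(a)\cap\{|s_j-A|\le\eps\}}\). The bad piece is the rest of \(\nu_j\), which consists of \(\nu_j|_{\Om\setminus B_r(a)}\) together with the small-mass set where \(|s_j-A|>\eps\). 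By Proposition~\ref{prop:height}, the mass of \(\eta_j\) is \(p+o(1)\) as \(j\to\infty\), \(r\downarrow0\) and then \(\eps\downarrow0\). The mass of the portion of the bad piece inside \(B_r(a)\), call it \(m_b\), tends to \(0\) in the same limits.

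\emph{Upper bound for the potential of the good piece.} On its support, \(u_j=s_j/M_j\le(A+\eps)/M_j\). Hence \(\eta_j\) has density at most
\[
 D_j=\frac{\theta_j(A+\eps)}{M_j^2}\,e^{(A+\eps)^2/M_j^2}.
\]
Lemma~\ref{lem:densitypotential} then gives
\[
 M_j^2\sup_\Om G\eta_j\le\frac{p}{4\pi}\Bigl[(A+\eps)^2-M_j^2\log(1/\theta_j)+O\bigl(M_j^2\log(1/M_j)\bigr)\Bigr]+o(1),
\]
so the \(\limsup\) of the left-hand side is at most \(\bigl[(A+\eps)^2-c\bigr]/A\).

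\emph{Lower bound by averaging.} On the support of \(\eta_j\) we have \(M_j^2w_j=s_j\ge A-\eps\). I integrate against \(\eta_j\) and use \(w_j=G\nu_j\) together with the symmetry of \(G\):
\[
 (A-\eps)\,\eta_j(\Om)\le M_j^2\!\int w_j\dd\eta_j
 =M_j^2\!\int G\eta_j\dd\eta_j+M_j^2\!\int G\eta_j\dd\nu_j^{\mathrm{bad}}.
\]
The first term is at most \(\eta_j(\Om)\,M_j^2\sup G\eta_j\). For the second term, the bad mass outside \(B_{2r}(a)\) contributes \(O(M_j^2)\), because \(G\) is bounded there. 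The bad mass inside \(B_{2r}(a)\) is small, and symmetry lets me bound its contribution by \(m_b\,M_j^2\sup G\eta_j\le Cm_b\). Letting \(j\to\infty\), then \(r\downarrow0\), then \(\eps\downarrow0\), I expect to obtain
\[
 A\,p\le p\,\frac{A^2-c}{A},
\]
which forces \(c\le0\), a contradiction.

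\emph{Main obstacle.} The delicate step is the bookkeeping of the bad pieces. Mass in the annulus \(B_{2r}(a)\setminus B_r(a)\) also lies close to \(a\), so \(G\) is not bounded on it. It therefore has to be absorbed via symmetry and the uniform bound on \(\sup G\eta_j\), using \(\nu(B_{2r}(a))\to p\). The order of the limits must also be kept consistent with Proposition~\ref{prop:height}. Finally, the additive error terms, namely \(\log\bigl(C(A+\eps)/(pM_j^2)\bigr)\) and the \(+1\) in Lemma~\ref{lem:densitypotential}, are killed by the factor \(M_j^2\).
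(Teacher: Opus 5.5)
Your proposal is correct and is essentially the paper's proof. The paper uses the same restricted measure \(\eta_j\), the same density bound \(D_j\) fed into Lemma~\ref{lem:densitypotential}, and the same symmetry step; your good/bad bookkeeping inside \(B_{2r}(a)\) recombines to exactly the paper's bound \(\int_{B_{2r}(a)}G\eta_j\dd\nu_j\le\nu_j(B_{2r}(a))\sup G\eta_j\), and both arguments end with \(A\le\frac{p}{4\pi}(A^2-b)=A-\frac{pb}{4\pi}\). The differences are cosmetic: you argue by contradiction from a subsequential limit \(c>0\) rather than showing that every subsequential limit vanishes, and you write \(p\) in place of \(m_j=\eta_j(\Om)\) before letting \(r\downarrow0\), which is harmless in the order of limits you state.
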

\begin{proof}
By \eqref{eq:massbounds}, \(b_j=M_j^2\log(1/\theta_j)\) is bounded and eventually nonnegative. From any subsequence, extract a further one such that \(b_j\to b\ge0\) and \(\nu_j,\Pi_j\) converge weakly to \(\nu,\Pi\), respectively. Proposition~\ref{prop:atom} gives an atom \(a\) of mass \(p>0\); set \(A=4\pi/p\).
Fix \(0<\eps<A\), choose \(r\) sufficiently small with both \(\nu\) and \(\Pi\) vanishing on \(\partial B_r(a)\) and \(\partial B_{2r}(a)\), and define
\[
 \eta_j=\nu_j|_{B_r(a)\cap\{|s_j-A|\le\eps\}},\qquad m_j=\eta_j(\Om).
\]
By Proposition~\ref{prop:height}, \(\liminf_jm_j\ge p/2\) for sufficiently small \(r\). {Set \(F_j=\Om\cap B_r(a)\cap\{|s_j-A|\le\eps\}\). Using \(\rho_j=\theta_ju_je^{u_j^2}\) and \(u_j=s_j/M_j\), the density of the restricted measure is
\[
 \frac{\dd\eta_j}{\dd x}
 =\ind_{F_j}\frac{\rho_j}{M_j}
 =\ind_{F_j}\frac{\theta_js_j}{M_j^2}
  \exp\left(\frac{s_j^2}{M_j^2}\right).
\]
On \(F_j\), we have \(0<A-\eps\le s_j\le A+\eps\), while this density vanishes outside \(F_j\). Thus, almost everywhere in \(\Om\),
\[
 0\le\frac{\dd\eta_j}{\dd x}\le D_j:=
 \frac{\theta_j(A+\eps)}{M_j^2}
 \exp\left(\frac{(A+\eps)^2}{M_j^2}\right).
\]}
Symmetry of the Green function and \(s_j=M_j^2G\nu_j\) give
\[
 (A-\eps)m_j\le\int s_j\dd\eta_j
 =M_j^2\int G\eta_j\dd\nu_j.
\]
Split \(\int_\Om G\eta_j\dd\nu_j\) into the parts inside and outside \(B_{2r}(a)\). Inside the ball, apply Lemma~\ref{lem:densitypotential} to \(G\eta_j\). Outside, the distance to \(\supp\eta_j\) is at least \(r\), so \(G\eta_j\le C_rm_j\). Substitution into the preceding inequality and division by \(m_j>0\) yield
\begin{align*}
 A-\eps\le\frac{\nu_j(B_{2r}(a))}{4\pi}
 \biggl[(A+\eps)^2-b_j-2M_j^2\log M_j
 +M_j^2\log\frac{C_\Om(A+\eps)}{m_j}+M_j^2\biggr]+C_rM_j^2.
\end{align*}
For fixed \(r,\eps\), we eventually have \(p/3\le m_j\le1\), while \(M_j\to0\) and \(M_j^2\log M_j\to0\). Using \(\nu(\partial B_{2r}(a))=0\) and letting \(j\to\infty\), we obtain
\[
 A-\eps\le\frac{\nu(B_{2r}(a))}{4\pi}\bigl((A+\eps)^2-b\bigr).
\]
Letting next \(r\downarrow0\) and then \(\eps\downarrow0\) gives
\[
 A\le\frac p{4\pi}(A^2-b)=A-\frac{pb}{4\pi},
\]
Thus \(b=0\). Every convergent subsequence has this limit, so the first assertion holds for the full sequence. 

\begin{revision}
Combining \(M_j\to0\) with \eqref{eq:logenergy}, we obtain, for all sufficiently large \(j\),
\[
 0\le M_j^2E_j
 \le C'_\Om\bigl[M_j^2+M_j^2\log(1/\theta_j)\bigr]\to0.
\]
\end{revision}
 Finally, \(\|\nabla s_j\|_2^2=M_j^2E_j\), and Poincar\'e's inequality yields the stated strong convergence.
\end{proof}

\section{Minimal radii and limits of rescaled measures}\label{sec:halfmass}

Let \(a_0\) be an atom of the weak limit \(\nu\), with \(p=\nu(\{a_0\})>0\). To determine the concentration scale of \(\nu_j\) near \(a_0\), for each sufficiently large \(j\) we select a ball of minimal radius satisfying \(\nu_j(B_{r_j}(b_j))=p/2\), with its center restricted to a prescribed set, and rescale by \(r_j\). {The mass of the ball is computed with respect to} \(\nu_j\), which has a continuous density; the threshold \(p/2\) is determined by the limiting atom. We first prove a classification lemma for the rescaled limiting measure.

\subsection{Classification of measures satisfying a planar integral identity}
In this subsection, the kernel \(K_X\) from \eqref{eq:stresskernels} is defined by the same formula off the diagonal in \(\R^2\times\R^2\).
\begin{lemma}\label{lem:weakclassify}
Let \(A>0\), and let \(\mu\ne0\) be a finite nonnegative nonatomic measure on \(\R^2\). Suppose that, for every \(X\in C_c^\infty(\R^2;\R^2)\),
\begin{equation}\label{eq:planarstress}
 \frac1{4\pi}\iint_{\R^2\times\R^2} K_X(z,y)\dd\mu(z)\dd\mu(y)
 =\frac1{2A}\int_{\R^2}\diver X\dd\mu.
\end{equation}
Then
\begin{equation}\label{eq:classdensity}
 \dd\mu(z)=\frac4A\,\frac{\lambda^2}{(1+\lambda^2|z-z_0|^2)^2}\dd z
 \quad\hbox{for some }\lambda>0,\ z_0\in\R^2.
\end{equation}
\end{lemma}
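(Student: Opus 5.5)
The plan is to show first that \(\mu\) is absolutely continuous, and then turn \eqref{eq:planarstress} into the weak form of a Liouville equation, to which Theorem~\ref{tool:chenli} applies. Write \(w=\frac1{2\pi}\log\frac1{|\cdot|}*\mu\) for the logarithmic potential of \(\mu\), so that \(-\Delta w=\mu\). Formally, the symmetrization in the proof of Lemma~\ref{lem:stress} runs in reverse:
\[
 \frac1{4\pi}\iint K_X\dd\mu\dd\mu=-\int X\cdot\nabla w\dd\mu .
\]
Then \eqref{eq:planarstress} says \(\int X\cdot\nabla w\dd\mu=-\frac1{2A}\int\diver X\dd\mu\) for every \(X\). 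In the sense of distributions this is \(\nabla\mu=2A\mu\nabla w\). Hence \(\mu e^{-2Aw}\) is constant, \(\mu=ce^{2Aw}\), and \(-\Delta w=ce^{2Aw}\). With \(V=Aw+\frac12\log(cA)\) we get \(-\Delta V=e^{2V}\) and \(\int e^{2V}=A\mu(\R^2)<\infty\). Theorem~\ref{tool:chenli} then gives the bubble \(V=\log\frac{2\lambda}{1+\lambda^2|z-z_0|^2}\), and \(\mu=(cA)^{-1}\cdot cA\,A^{-1}e^{2V}=A^{-1}e^{2V}\dd z\), which is exactly \eqref{eq:classdensity}. As a consistency check, \(\mu(\R^2)=4\pi/A\).

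The rigorous steps, in order, are as follows.

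\textbf{Step 1: regularity of the potential.} Since \(\mu\) is nonatomic, \(\nabla w\) is defined \(\mu\)-a.e. as a principal-value-type integral. I would avoid this by testing \eqref{eq:planarstress} with localized fields \(X(z)=\psi(z)(z-z_1)\) and \(X=\psi e_i\), following the proof of Proposition~\ref{prop:square}. Taking \(X=(z-z_1)\chi(|z-z_1|/\eps)\), the left side gives \(\mu(B_\eps(z_1))^2/(4\pi)\) up to errors. The right side is approximately \(\frac1A\mu(B_\eps(z_1))\) plus the annulus contribution. This yields the local smallness \(\mu(B_\eps(z_1))\lesssim 4\pi/A+o(1)\), and, more usefully, a monotonicity formula for \(\eps\mapsto\mu(B_\eps(z_1))\) in the style of the Pohozaev identity.

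\textbf{Step 2: absolute continuity and boundedness.} I would mollify, setting \(\mu_\delta=\mu*\varphi_\delta\), and show that \(\mu\) has an \(L^\infty_{\rm loc}\) density. The route is a Brezis--Merle-type argument: nonatomicity gives \(e^{k|w|}\in L^1_{\rm loc}\) for every \(k\), via the Jensen argument of Lemma~\ref{lem:jensen} on small balls, where the mass is arbitrarily small. Then \(\nabla w\in L^q_{\rm loc}\) for all \(q<2\), with better integrability against \(\mu\). This makes the identity \(\int X\cdot\nabla w\dd\mu=-\frac1{2A}\int\diver X\dd\mu\) legitimate, and shows that the distribution \(T=\mu e^{-2Aw}\) satisfies \(\nabla T=0\). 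For this last point I would compute \(\nabla(\mu e^{-2Aw})\) by approximating \(e^{-2Aw}\) with mollified \(w_\delta\), using the commutator estimate \(\int X e^{-2Aw_\delta}\cdot(\nabla w_\delta-\nabla w)\dd\mu\to0\).

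\textbf{Step 3: conclusion.} With \(\mu=ce^{2Aw}\dd z\) and \(c>0\) (because \(\mu\ne0\)), \(w\) is a distributional solution of \(-\Delta w=ce^{2Aw}\) with \(e^{2Aw}\in L^1\). Brezis--Merle local boundedness and elliptic regularity give \(w\in C^2\), and Theorem~\ref{tool:chenli} finishes the proof.

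The main obstacle is Step 2: justifying the nonlinear product \(\mu\nabla w\) and the identity \(\nabla(\mu e^{-2Aw})=0\) for a merely nonatomic measure. If the distributional route proves too delicate, I would first derive from Step 1 that \(\mu(B_\eps(z))\to0\) uniformly and then apply Lemma~\ref{lem:jensen}-type exponential integrability on each ball.
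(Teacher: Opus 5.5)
Your formal derivation is exactly the paper's final step: reverse symmetrization gives \(\nabla\mu=2A\mu\nabla w\), hence \(\mu=ce^{2Aw}\), then \(V=Aw+\frac12\log(cA)\) and Chen--Li. But there is a genuine gap, at the place you flag yourself. Nothing in Steps 1--2 proves that \(\mu\) is absolutely continuous. Without that, the product \(\mu\nabla w\), the distribution \(\nabla(\mu e^{-2Aw})\), and the commutator estimate have no meaning.

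The tools you propose cannot supply absolute continuity, because they hold for every finite nonatomic measure.
\begin{itemize}
\item The Jensen argument of Lemma~\ref{lem:jensen} gives \(e^{k|w|}\in L^1_{\mathrm{loc}}(\dd z)\) also for arclength measure on a segment, where \(w\) is even bounded.
\item Uniform smallness \(\sup_z\mu(B_\eps(z))\to0\) follows automatically from finiteness and nonatomicity.
\item For the segment measure, the normal component of \(\nabla w\) jumps across the support. So \(\int X\cdot\nabla w\dd\mu\) is defined only through the symmetrized kernel \(K_X\), that is, only through the hypothesis itself. No a priori integrability of \(\nabla w\) against \(\mu\) is available.
\end{itemize}
Your Step 1 field \((z-z_1)\chi(|z-z_1|/\eps)\) does not help either. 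Its divergence on the cutoff annulus is of order one and changes sign, contributing \(O(\mu(B_{2\eps}\setminus B_\eps))\), the same order as the main term \(\mu(B_\eps)/A\). Its kernel is unsigned and only bounded by a constant, so no monotonicity formula or decay rate for \(\mu(B_\eps)\) comes out.

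The missing idea is a quantitative upper density bound extracted from \eqref{eq:planarstress} with a well-chosen field. The paper tests with the Lipschitz field \(X_r(z)=(z-z_*)/\max\{r^2,|z-z_*|^2\}\).
\begin{itemize}
\item It equals \((z-z_*)/r^2\) inside \(B_r(z_*)\) and is the divergence-free inversion field outside. The right side is therefore exactly \(m(r)/(Ar^2)\), with \(m(r)=\mu(B_r(z_*))\) and no annulus error.
\item The inequality \(|X_r(z)-X_r(y)|\le|z-y|k_r(z)k_r(y)\), with \(k_r=1/\max\{r,|\cdot-z_*|\}\), bounds the left side by \(F(r)^2/(4\pi)\), where \(F(r)=\int k_r\dd\mu=\int_r^\infty m(t)t^{-2}\dd t\).
\item The resulting inequality \(m(r)\le\frac{A}{4\pi}r^2F(r)^2\), with \(F'=-m/r^2\), bootstraps. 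Nonatomicity makes \(rF(r)\) small, which gives \(m(r)\le Cr^{2-2\eps}\) uniformly in the center. Hence \(\sup_{z_*}F_{z_*}(0)<\infty\), and so \(\mu(B_r(z_*))\le Cr^2\), i.e. \(\mu=f_0\dd z\) with \(f_0\in L^\infty\).
\end{itemize}
Only after this does your formal argument become rigorous, and then easily: \(\nabla W\) is locally bounded, \(\nabla f_0=2Af_0\nabla W\) puts \(f_0\) in \(W^{1,\infty}_{\mathrm{loc}}\), and the product rule makes \(f_0e^{-2AW}\) constant.

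One smaller point: \(\frac1{2\pi}\log\frac1{|\cdot|}*\mu\) need not converge for a finite measure on \(\R^2\) without a logarithmic moment. You should use a renormalized kernel such as \(\log\frac{|z-y|}{1+|y|}\), as the paper does.
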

\begin{proof}
Fix \(z_*\in\R^2\), set \(m(r)=\mu(B_r(z_*))\), and define
\[
 X_r(z)=\frac{z-z_*}{\max\{r^2,|z-z_*|^2\}},
 \qquad k_r(z)=\frac1{\max\{r,|z-z_*|\}}.
\]
The map \(X_r\) is Lipschitz and, away from the circle of radius \(r\), its divergence is
\(2r^{-2}\ind_{B_r(z_*)}\). It satisfies
\begin{equation}\label{eq:inversion}
 |X_r(z)-X_r(y)|\le |z-y|k_r(z)k_r(y).
\end{equation}
Translate \(z_*\) to the origin. When both points lie inside the ball, \(X_r(z)=z/r^2\), so \eqref{eq:inversion} holds. When both lie outside, a direct calculation gives
\[
 \left|\frac{z}{|z|^2}-\frac{y}{|y|^2}\right|
 =\frac{|z-y|}{|z||y|},
\]
which is the equality case of \eqref{eq:inversion}. If one point is inside and the other outside, assume \(|z|\le r\le|y|\). Squaring and cancelling the {cross terms}, the square of the left-hand side minus the square of the right-hand side is
\((|z|^2-r^2)(r^{-4}-r^{-2}|y|^{-2})\le0\), proving the inequality.

Choose \(r\) with \(\mu(\partial B_r(z_*))=0\). Multiply \(X_r\) by a smooth cutoff equal to one on \(B_L(z_*)\) and supported in \(B_{2L}(z_*)\), and smooth it near the circle of radius \(r\). The divergences converge off that circle and are uniformly bounded for fixed \(r\), so their integrals {with respect to} \(\mu\) converge. The corresponding kernels \(K_X\) are uniformly bounded, and \(\mu\) has no atoms. These approximations extend \eqref{eq:planarstress} to the chosen vector field. We may then remove the cutoff: the divergence error is \(O(L^{-2})\), and the kernels remain uniformly bounded for fixed \(r\). By \eqref{eq:inversion},
\begin{equation}\label{eq:massode}
 \frac{m(r)}{Ar^2}\le\frac1{4\pi}F(r)^2,\qquad
 F(r)=\int_{\R^2} k_r\dd\mu=\int_r^\infty\frac{m(t)}{t^2}\dd t.
\end{equation}
where the expression for \(F\) follows by integrating the identity
\[
 \frac1{\max\{r,|z-z_*|\}}
 =\int_r^\infty\frac{\ind_{\{|z-z_*|<t\}}}{t^2}\dd t.
\]
{with respect to} \(\mu\) and applying Tonelli's theorem. The function \(F\) is absolutely continuous on every compact subinterval of \((0,\infty)\), with
\(F'(r)=-m(r)/r^2\) almost everywhere.
Set \(f(r)=rF(r)\) and \(C=A/(4\pi)\). Almost everywhere,
\[
 m(r)\le Cf(r)^2,\qquad rf'(r)=f(r)-m(r)\ge f(r)-Cf(r)^2.
\]
Since \(\mu\) has no atoms, \(f(r)\to0\). Indeed, splitting the defining integral over a ball of fixed small radius \(\delta\) and its complement gives
\(f(r)\le\mu(B_\delta(z_*))+r\mu(\R^2)/\delta\). Let first \(r\downarrow0\), then \(\delta\downarrow0\), and use \(\mu(\{z_*\})=0\).
Choose \(0<\eps<1/2\), and then \(r_0\) sufficiently small that \(Cf(r)\le\eps\) for \(r\le r_0\). Since \(f(r)>0\), integrating \(f'/f\ge(1-\eps)/r\) from \(r\) to \(r_0\) yields
\[
 f(r)\le f(r_0)(r/r_0)^{1-\eps},\qquad m(r)\le C_1r^{2-2\eps}.
\]

The constants can be chosen independently of the center. Indeed, a finite nonatomic measure satisfies
\(\sup_{z_*}\mu(\cl B_\delta(z_*))\to0\) as \(\delta\downarrow0\). Otherwise, there would be balls whose radii tend to zero but whose masses have a positive lower bound. Since \(\mu(\R^2\setminus B_R(0))\to0\) as \(R\to\infty\), their centers must remain bounded. A convergent subsequence of centers would then yield a point every neighborhood of which has the same positive mass lower bound. Shrinking the neighborhood would give an atom, a contradiction. Thus \(r_0\) can be chosen uniformly, and \(f(r_0)\le\mu(\R^2)\) is also uniform. Since \(2\eps<1\), interchanging the order of the nonnegative integrals gives
\[
 \sup_{z_*}F_{z_*}(0)
 \le C_1\int_0^{r_0}t^{-2\eps}\dd t+\frac{\mu(\R^2)}{r_0}<\infty.
\]
Returning to \eqref{eq:massode}, we obtain
\begin{equation}\label{eq:ballarea}
 \mu(B_r(z_*))\le C_2r^2
 \quad\hbox{for every center and all sufficiently small radii}.
\end{equation}
The argument so far applies to radii with \(\mu(\partial B_r(z_*))=0\). Approximating any sufficiently small radius from below by such radii and using continuity from below removes this restriction. If \(E\) is a Lebesgue null set, then, for any \(\eta>0\), it has a covering by small balls
\(E\subset\bigcup_\ell B_{r_\ell}(z_\ell)\) with \(\sum_\ell r_\ell^2<\eta\). Hence
\[
 \mu(E)\le\sum_\ell\mu(B_{r_\ell}(z_\ell))
 \le C_2\sum_\ell r_\ell^2<C_2\eta.
\]
Letting \(\eta\downarrow0\) gives \(\mu\ll\dd z\). Write \(\mu=f_0\dd z\). By the Lebesgue differentiation theorem, for Lebesgue almost every \(z\),
\[
 f_0(z)=\lim_{r\downarrow0}\frac{\mu(B_r(z))}{\pi r^2}\le\frac{C_2}{\pi}.
\]

To ensure convergence at infinity, define the logarithmic potential
\[
 W(z)=-\frac1{2\pi}\int_{\R^2}
 \log\frac{|z-y|}{1+|y|}\dd\mu(y).
\]
This potential is well defined on compact sets: the local logarithmic singularity is integrable {when multiplied by} a bounded density, while at infinity the logarithm is \(O_K((1+|y|)^{-1})\) for \(z\) in a fixed compact set. Its gradient is locally bounded, since \(|z-y|^{-1}\) is locally integrable {when multiplied by} a bounded density and is bounded away from the singularity. In the sense of distributions, \(-\Delta W=f_0\). The singular kernel is now absolutely integrable. Split the numerator \(X(z)-X(y)\) of \(K_X\) in \eqref{eq:planarstress} into two terms, interchange \(z,y\) in the integral containing \(X(y)\), and substitute the integral formula for \(\nabla W\). This gives
\[
 -\int_{\R^2} f_0\diver X\dd z=2A\int_{\R^2} f_0\nabla W\cdot X\dd z
 \qquad(X\in C_c^\infty(\R^2;\R^2)).
\]

The left-hand side is the action of the distributional gradient \(\nabla f_0\) on \(X\), so
\(\nabla f_0=2Af_0\nabla W\). Since \(f_0\) is bounded and \(\nabla W\) is locally bounded,
we have \(f_0\in W^{1,\infty}_{\mathrm{loc}}\). The function \(W\) is also locally Lipschitz. The Sobolev product and chain rules therefore apply and give
\[
 \nabla(f_0e^{-2AW})
 =e^{-2AW}\nabla f_0-2Af_0e^{-2AW}\nabla W=0.
\]
A local Sobolev function with zero distributional gradient on the connected plane is constant. Thus \(f_0=C_0e^{2AW}\), and \(\mu\ne0\) implies \(C_0>0\).
Since \(-\Delta W=f_0\in L^\infty_{\mathrm{loc}}\), we have
\(W\in W^{2,q}_{\mathrm{loc}}\) for every finite \(q\). Taking \(q>2\) yields
\(W\in C^{1,\alpha}_{\mathrm{loc}}\). Combining this with \(f_0=C_0e^{2AW}\) and iterating Schauder estimates,
we find that \(W,f_0\) are smooth. The function
\(V=AW+\tfrac12\log(AC_0)\) then satisfies
\[
 -\Delta V=Af_0=AC_0e^{2AW}=e^{2V},
 \qquad \int_{\R^2}e^{2V}=A\mu(\R^2)<\infty.
\]
Theorem~\ref{tool:chenli} gives
\[
 f_0=\frac1A e^{2V}
 =\frac4A\frac{\lambda^2}{(1+\lambda^2|z-z_0|^2)^2},
\]
which is \eqref{eq:classdensity}.
\end{proof}

\subsection{Minimal-radius balls and the rescaled limit}
Retain the atom \(a_0\) and its mass \(p\) fixed at the beginning of this section, and the subsequence from Section~\ref{sec:pressure} along which \(\nu_j,\Pi_j\) converge weakly. Set \(A=4\pi/p\), passing to further subsequences as needed.

Choose \(\delta^{(n)}\downarrow0\) so that the boundaries of \(B_{\delta^{(n)}}(a_0)\) and \(B_{(\delta^{(n)})^2}(a_0)\) have zero \(\nu\) measure. Both families of balls decrease to \(\{a_0\}\), so continuity from above gives convergence of both masses to \(p\). By Proposition~\ref{prop:height}, the radii may also be chosen so that
\begin{align*}
 \limsup_j\left\|\Pi_j-\frac{\nu_j}A\right\|_{\TV(B_{\delta^{(n)}}(a_0))}&<\frac1n,\\
 \limsup_j\nu_j(B_{\delta^{(n)}}(a_0)\cap\{|s_j-A|>1/k\})&<\frac1n
 \quad(1\le k\le n).
\end{align*}
For fixed \(n\), weak convergence gives convergence of the masses of these two balls, since their boundaries have zero \(\nu\) measure. We can therefore choose strictly increasing indices \(j_n\) such that, for every \(j\ge j_n\),
\begin{align*}
 \left\|\Pi_j-\frac{\nu_j}A\right\|_{\TV(B_{\delta^{(n)}}(a_0))}&<\frac2n,\\
 \nu_j(B_{\delta^{(n)}}(a_0)\cap\{|s_j-A|>1/k\})&<\frac2n
 \quad(1\le k\le n),\\
 \left|\nu_j(B_{\delta^{(n)}}(a_0))-\nu(B_{\delta^{(n)}}(a_0))\right|&<\frac1n,\\
 \left|\nu_j(B_{(\delta^{(n)})^2}(a_0))-\nu(B_{(\delta^{(n)})^2}(a_0))\right|&<\frac1n.
\end{align*}
Set \(\delta_j=\delta^{(n)}\) for \(j_n\le j<j_{n+1}\). The corresponding index \(n\) tends to infinity with \(j\), and the preceding estimates give
\begin{equation}\label{eq:diagonal}
 \nu_j(B_{\delta_j}(a_0))=p+o(1),\quad
 \nu_j(B_{\delta_j^2}(a_0))=p+o(1),\quad
 \left\|\Pi_j-\frac{\nu_j}A\right\|_{\TV(B_{\delta_j}(a_0))}\to0,
\end{equation}
and, for every fixed \(\eps>0\), the \(\nu_j\) measure of \(B_{\delta_j}(a_0)\cap\{|s_j-A|>\eps\}\) tends to zero.

We next show that a ball of minimal radius can be chosen among those centered in \(\cl B_{\delta_j/2}(a_0)\) with \(\nu_j\) mass at least \(p/2\). Fix \(j\) large enough that \eqref{eq:diagonal} gives \(\nu_j(B_{\delta_j^2}(a_0))>p/2\). It suffices to consider radii in \([0,\delta_j^2]\). Define
\[
 \mathcal K_j:=\left\{(b,r)\in\cl B_{\delta_j/2}(a_0)\times[0,\delta_j^2]:
 \nu_j(B_r(b))\ge\frac p2\right\},
\]
with \(B_0(b)=\varnothing\). The set is nonempty because \((a_0,\delta_j^2)\in\mathcal K_j\).

Absolute continuity of \(\nu_j\) implies that \((b,r)\mapsto\nu_j(B_r(b))\) is continuous. Thus \(\mathcal K_j\) is a closed subset of the compact set \(\cl B_{\delta_j/2}(a_0)\times[0,\delta_j^2]\). The radius attains its minimum at some \((b_j,r_j)\in\mathcal K_j\), with
\[
 r_j=\min\{r:(b,r)\in\mathcal K_j\}\leq\delta_j^2.
\]
\rev{Since \(B_0(b)=\varnothing\), we have \(r_j>0\). If \(\nu_j(B_{r_j}(b_j))>p/2\), continuity in the radius would allow us to decrease \(r_j\) while keeping the mass at least \(p/2\), contradicting minimality. Hence \(\nu_j(B_{r_j}(b_j))=p/2\).}

For all sufficiently large \(j\), \(\delta_j/2+\delta_j^2<\delta_j\), so each minimal-radius ball lies in \(B_{\delta_j}(a_0)\). It must also intersect \(B_{\delta_j^2}(a_0)\). Otherwise, its \(\nu_j\) mass would be at most the difference of the two ball masses in \eqref{eq:diagonal}, namely \(o(1)\), contradicting its mass \(p/2\). Thus
\begin{equation}\label{eq:halfnormalization}
 |b_j-a_0|\le2\delta_j^2,\qquad r_j\le\delta_j^2,\qquad
 \nu_j(B_{r_j}(b_j))=p/2.
\end{equation}
In particular, \(b_j\) eventually lies in \(B_{\delta_j/2}(a_0)\). Write \(\widehat\Om_j=(\Om-b_j)/r_j\) for the rescaled domain and \(\mathcal S_j(x)=(x-b_j)/r_j\). Define \(\widehat\nu_j=(\mathcal S_j)_\#\nu_j\) and \(\widehat\Pi_j=(\mathcal S_j)_\#\Pi_j\), extending both by zero to \(\R^2\). More explicitly, after extending the original measures by zero outside \(\Om\), for every Borel set \(B\subset\R^2\),
\[
 \widehat\nu_j(B)=\nu_j(b_j+r_jB),\qquad
 \widehat\Pi_j(B)=\Pi_j(b_j+r_jB).
\]

For this choice of centers and radii, the limiting measure is determined as follows.
\begin{proposition}\label{prop:profile}
After passing to a subsequence,
\begin{equation}\label{eq:profile}
 \widehat\nu_j\weak\mu\quad\hbox{locally weakly on }\R^2,\qquad
 \dd\mu(z)=\frac{p}{\pi(1+|z|^2)^2}\dd z.
\end{equation}
Thus \(\mu(\R^2)=p\): the total mass of the rescaled limit equals the mass of the atom of \(\nu\) at \(a_0\).
\end{proposition}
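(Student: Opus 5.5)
Extract a locally weakly convergent subsequence \(\widehat\nu_j\weak\mu\). The total masses are \(\le1\), so this is possible, and also \(\widehat\Pi_j\weak\widehat\Pi\) along a common subsequence. The plan is to show three things: \(\mu\) is nonatomic; \(\mu\) satisfies the planar identity \eqref{eq:planarstress} with the constant \(A=4\pi/p\); and \(\mu(\R^2)=p\) with \(\mu(\cl B_1)\ge p/2\). Lemma~\ref{lem:weakclassify} then gives the bubble form \eqref{eq:classdensity}. The total mass \((4/A)\cdot\pi=p\) fixes the normalization automatically, and the minimality of \(r_j\) should force \(\lambda=1\) and \(z_0=0\).

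\emph{Nonatomicity and total mass.} Suppose \(\mu(\{z_1\})>p/2\). Then a rescaled ball \(B_{1/2}(z_1)\) would eventually carry \(\widehat\nu_j\) mass \(\ge p/2\) with radius \(r_j/2\) and center in \(\cl B_{\delta_j/2}(a_0)\), since \(|b_j-a_0|\le2\delta_j^2\). This contradicts minimality, and more generally gives \(\widehat\nu_j(B_{1}(z))\le p/2\) for all \(z\) in a huge region, after slightly shrinking radii. Hence \(\mu(B_1(z))\le p/2\) for all \(z\), and \(\mu(\cl B_1)\ge p/2\). The masses \(\widehat\nu_j(\R^2)\) inside \(B_{\delta_j/r_j}\) are \(p+o(1)\) by \eqref{eq:diagonal}, so \(\mu(\R^2)\le p\). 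Equality (no escape to infinity) is the delicate part. Atoms of mass \(\le p/2\) are not excluded by this covering argument alone, so I would rule them out together with mass loss using the identity below, as follows. Apply Lemma~\ref{lem:stress} to the rescaled field \(X(x)=r_jY((x-b_j)/r_j)\), with \(Y\in C_c^\infty\). Then \(K_X\) is scale invariant, \(B_X(\nu_j)=O(r_j)\to0\), and \(\diver X\) pulls back to \(\diver Y\). By \eqref{eq:diagonal}, \(\Pi_j\) may be replaced by \(\nu_j/A\) up to \(o(1)\) in total variation. Passing to the limit requires handling the diagonal and the far-field interactions. For \(y\) far outside \(\supp Y\) and \(z\in\supp Y\), \(|K_Y(z,y)|\le C/|y|\), so interactions with the mass at scale \(\gg1\) vanish in the limit, and mass outside \(B_{\delta_j}(a_0)\) contributes \(O(r_j/\delta_j)\). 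Then
\[
 \frac1{4\pi}\iint K_Y\dd\mu\dd\mu+\frac1{4\pi}\sum_k \mu(\{z_k\})^2\,\overline{K}_Y(z_k)=\frac1{2A}\int\diver Y\dd\mu,
\]
where the diagonal contribution at atoms is handled exactly as in Proposition~\ref{prop:square}. Testing with \(Y=(z-z_k)\chi(|z-z_k|/\eps)\) gives \(\mu(\{z_k\})^2/(4\pi)=\mu(\{z_k\})/A\), i.e.\ \(\mu(\{z_k\})=p\). This exceeds \(p/2\), contradicting the bound above, so \(\mu\) is nonatomic and \eqref{eq:planarstress} holds.

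\emph{No mass loss.} To show \(\mu(\R^2)=p\), I would test the same identity with the rescaled field \(X=(x-b_j)\chi(|x-b_j|/R r_j)\) at intermediate scales. I expect this to be the main obstacle. The argument of Proposition~\ref{prop:square} applied to the dilation field centered at \(b_j\) with radius \(\eps\) fixed (and \(\eps\downarrow0\) afterwards) gives \(m^2/(4\pi)=m/A\) for the mass \(m\) concentrating at \(b_j\) at all scales \(\ll1\). Combined with the classification of \(\mu\), which has mass \((4\pi/A)\cdot(\text{normalized})\)... More concretely, first apply Lemma~\ref{lem:weakclassify} to the nonzero, nonatomic \(\mu\). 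This gives \(\mu(\R^2)=4\pi/A=p\) automatically, since the formula \eqref{eq:classdensity} always has total mass \(4\pi/A\). This is the point I would rely on: the classification itself forces full mass, so explicit tightness is unnecessary. The remaining rigorous task is verifying the far-field error estimate above without tightness, using \(|K_Y(z,y)|\lesssim\|Y\|_\infty/|z-y|\).

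\emph{Identifying \(\lambda\) and \(z_0\).} With \(\dd\mu=\frac{p\lambda^2}{\pi(1+\lambda^2|z-z_0|^2)^2}\dd z\), every unit ball has mass \(\le p/2\) and \(\mu(\cl B_1)\ge p/2\). The maximal unit-ball mass of this radial bump is attained at the center \(z_0\) and equals \(p\lambda^2/(1+\lambda^2)\). The two conditions force \(\lambda^2/(1+\lambda^2)\le1/2\le\) the mass of \(B_1(0)\), and since the mass of a unit ball is strictly maximized only at \(z_0\), we get \(\lambda=1\) and \(z_0=0\). This yields \eqref{eq:profile}, and since the limit is unique the subsequence qualifier is harmless.
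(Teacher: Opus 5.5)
Your proposal is correct and follows essentially the same route as the paper. Minimality of \(r_j\) gives \(\mu(B_1(z))\le p/2\) for every \(z\), which bounds every atom by \(p/2\), while \(\mu(\cl B_1)\ge p/2\) shows \(\mu\ne0\). The rescaled identity \eqref{eq:stress}, tested with the dilation field, forces any atom to have mass exactly \(p\), so \(\mu\) is nonatomic, and the far-field bound on \(K_X\) then yields \eqref{eq:planarstress}. Lemma~\ref{lem:weakclassify} gives \(\mu(\R^2)=4\pi/A=p\) without a separate tightness argument, and comparing unit-ball masses pins down \(z_0=0\) and \(\lambda=1\).

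Two points are cosmetic only. Your general diagonal term \(\overline K_Y(z_k)\) is not well defined for arbitrary \(Y\); it is meaningful, and needed, only for the dilation field, where \(K_Y\equiv1\) near the atom. The intermediate-scale detour in your mass-loss paragraph can simply be deleted.
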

\begin{proof}
Since \(\widehat\nu_j(\R^2)=1\), there is a locally weakly convergent subsequence \(\widehat\nu_j\weak\mu\). For every fixed \(T>0\), the ball \(b_j+r_jB_T\) eventually lies in \(B_{\delta_j}(a_0)\). The weak convergence inequality for open sets and \eqref{eq:diagonal} give \(\mu(B_T)\le p\). Letting \(T\to\infty\), we obtain \(\mu(\R^2)\le p\).
By \eqref{eq:halfnormalization} and the weak convergence inequality for closed sets,
\[
 \frac p2=\limsup_j\widehat\nu_j(\cl B_1)\le\mu(\cl B_1).
\]
For fixed \(z\in\R^2\) and \(0<t<1\), the center \(b_j+r_jz\) eventually belongs to \(\cl B_{\delta_j/2}(a_0)\). If \(\widehat\nu_j(B_t(z))>p/2\), then the original ball \(B_{tr_j}(b_j+r_jz)\) would be admissible with radius \(tr_j<r_j\), contradicting minimality of \(r_j\). Thus \(\widehat\nu_j(B_t(z))\le p/2\), and
\[
 \mu(B_t(z))\le\liminf_j\widehat\nu_j(B_t(z))\le p/2.
\]
Letting \(t\downarrow0\) shows that every atom of \(\mu\) has mass at most \(p/2\).

Fix \(X\in C_c^\infty(\R^2;\R^2)\), and use
\(X_j(x)=r_jX((x-b_j)/r_j)\) in \eqref{eq:stress}. The singular kernel is invariant under this scaling, the regular term is \(O(r_j)\), and
\(\diver X_j(x)=\diver X((x-b_j)/r_j)\). By \eqref{eq:diagonal}, replacing \(\widehat\Pi_j\) by \(\widehat\nu_j/A\) introduces an error in the divergence integral of at most
\[
 \tfrac12\|\diver X\|_\infty
 \left\|\Pi_j-\frac1A\nu_j\right\|_{\TV(B_{\delta_j}(a_0))}=o(1).
\]
Hence, for each fixed \(X\),
\[
 \frac1{4\pi}\iint_{\R^2\times\R^2} K_X(z,y)\dd\widehat\nu_j(z)\dd\widehat\nu_j(y)
 =\frac1{2A}\int_{\R^2}\diver X(z)\dd\widehat\nu_j(z)+o(1).
\]
Here \(\widehat\nu_j\) is extended by zero outside the rescaled domain \(\widehat\Om_j\).
If \(z\in\supp X\) and \(|y|\) is sufficiently large, then
\begin{equation}\label{eq:tailkernel}
 |K_X(z,y)|\le C_X/|y|.
\end{equation}
Thus the contribution from pairs with at least one variable outside \(B_L\) is at most \(C_X/L\), uniformly in \(j\).

If \(\mu(\{z_0\})=q>0\), take
\(X_\eps(z)=(z-z_0)\chi(|z-z_0|/\eps)\), with \(\chi\) as in Proposition~\ref{prop:square}.
Use the same estimates over the inner ball, annulus, and exterior as in that proposition, with \eqref{eq:tailkernel} controlling the contribution at infinity.
Letting successively \(j\to\infty\), the inner radius \(\eta\downarrow0\), and \(\eps\downarrow0\), we obtain
\[
 \frac{q^2}{4\pi}=\frac qA.
\]
This forces \(q=4\pi/A=p\), contrary to \(q\le p/2\). Hence \(\mu\) has no atoms.

It follows that
\[
 (\mu\otimes\mu)\{(z,y):z=y\}=\int\mu(\{z\})\dd\mu(z)=0.
\]
First insert fixed smooth compactly supported cutoffs in both variables. Approximation by finite sums of products of continuous functions shows that \(\widehat\nu_j\otimes\widehat\nu_j\) converges locally weakly to \(\mu\otimes\mu\). Within the cutoff supports, apply {the Portmanteau theorem} to the closed set \(\{|z-y|\le\eta\}\). Taking first the upper limit as \(j\to\infty\) and then letting \(\eta\downarrow0\), its mass tends to zero because the diagonal has zero \(\mu\otimes\mu\) measure. Since \(K_X\) is bounded, the corresponding integral also tends to zero.
Away from the diagonal, the kernel is continuous, so we may pass to the limit in the truncated integral. Finally, \eqref{eq:tailkernel} allows the cutoff radius to tend to infinity, giving \eqref{eq:planarstress}.

Lemma~\ref{lem:weakclassify} gives the density of \(\mu\) as
\((p/\pi)\lambda^2(1+\lambda^2|z-z_0|^2)^{-2}\). In particular,
\begin{equation}\label{eq:parentmass}
 \mu(\R^2)=p.
\end{equation}
Since \(\mu\) is absolutely continuous, every circle has zero \(\mu\) measure, and \eqref{eq:halfnormalization} gives \(\mu(B_1(0))=p/2\). We have already proved that \(\mu(B_t(z))\le p/2\) for every center \(z\) and \(0<t<1\). Letting \(t\uparrow1\) and using continuity from below gives \(\mu(B_1(z))\le p/2\).
{If \(z_0\ne0\), the sets \(B_1(z_0)\setminus B_1(0)\) and \(B_1(0)\setminus B_1(z_0)\) have equal positive area. The density is strictly larger than its value at distance one from \(z_0\) on the first set and strictly smaller on the second, up to their boundaries. Hence \(\mu(B_1(z_0))>\mu(B_1(0))=p/2\), contrary to the bound for unit balls. Thus \(z_0=0\).} Finally,
\(\mu(B_1)=p\lambda^2/(1+\lambda^2)=p/2\) implies \(\lambda=1\), proving \eqref{eq:profile}.
\end{proof}

\section{Selection of local maxima and neighborhood estimates}\label{sec:center}

The preceding section identified the limit of the rescaled measures. We now prove local smooth convergence of the corresponding functions, use it to select local maxima, and verify the integral conditions needed for radial comparison near these points.

\subsection{Strong convergence of the densities and parameter asymptotics}
Continue to rescale by the minimal radius \(r_j\), and define the circular mean
\[
 c_j=\frac1{2\pi}\int_0^{2\pi}u_j(b_j+r_je^{i\alpha})\dd\alpha.
\]
For \(z\in\widehat\Om_j\), set
\begin{equation}\label{eq:Wnormalization}
 W_j(z)=\frac{u_j(b_j+r_jz)-c_j}{M_j},\qquad
 A_j=M_jc_j,\qquad
 k_j=\frac{r_j^2\theta_j}{M_j^2}e^{c_j^2}.
\end{equation}
In the rescaled coordinates of this section, \(s_j(z)\) denotes the original function \(s_j(b_j+r_jz)\), so that \(s_j=A_j+M_j^2W_j\). A change of variables gives the density of the rescaled measure as
\[
 f_j(z):=\frac{\dd\widehat\nu_j}{\dd z}(z)
 =\frac{r_j^2}{M_j}\rho_j(b_j+r_jz).
\]
In the following equation, the Laplacian acts on the rescaled variable \(z\):
\begin{equation}\label{eq:fj}
 -\Delta W_j=f_j
 =k_js_je^{2A_jW_j+M_j^2W_j^2}
 =k_js_je^{(A_j+s_j)W_j}.
\end{equation}

\begin{proposition}\label{prop:center}
After passing to a subsequence,
\begin{align}
 &A_j\to A,\qquad k_j\to A^{-2},\qquad
 M_j^2\log(1/r_j)\to A^2/2,\label{eq:scalematch}\\
 &W_j\to W(z):=\frac1A\log\frac2{1+|z|^2}
 \quad\hbox{in }L^q_{\mathrm{loc}}(\R^2)\quad(1\le q<\infty),\label{eq:Wlimit}\\
 &f_j\to\frac{p}{\pi(1+|z|^2)^2}
 \quad\hbox{in }L^1_{\mathrm{loc}}(\R^2).\label{eq:strongsource}
\end{align}
\end{proposition}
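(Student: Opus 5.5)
The plan is to get the convergence of \(W_j\) first and then read off the constants from the equation \eqref{eq:fj} together with Proposition~\ref{prop:profile}.

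\emph{Step 1: limit of \(W_j\).} In rescaled variables the Green representation reads
\[
 W_j(z)=\int\Bigl[\frac1{2\pi}\log\frac{1}{|z-y|}-\frac1{2\pi}\cdot\frac1{2\pi}\int_0^{2\pi}\log\frac1{|e^{i\alpha}-y|}\dd\alpha\Bigr]\dd\widehat\nu_j(y)+\text{(regular part)}.
\]
Subtracting the circular mean \(c_j\) kills the \(\log(1/r_j)\) term and normalizes \(W_j\) to have zero mean on \(\partial B_1\). The regular part \(H\) varies by \(O(r_j)\) on compact rescaled sets, since \(b_j\to a_0\) is interior. Hence on \(B_T\),
\[
 W_j(z)=\frac1{2\pi}\int\log\frac{\max\{1,|y|\}}{|z-y|}\dd\widehat\nu_j(y)+o(1).
\]
The mass of \(\widehat\nu_j\) at distances \(|y|\gg1\) contributes only \(O(\log\) of bounded ratios\()\) times small mass, and mass escaping to infinity gives a vanishing contribution, because the kernel tends to zero as \(|y|\to\infty\) uniformly for \(z\in B_T\). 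Thus \(\widehat\nu_j\weak\mu\) (locally) together with \(\widehat\nu_j(\R^2)\le1\) yields \(W_j\to\frac1{2\pi}\int\log\frac{\max\{1,|y|\}}{|z-y|}\dd\mu(y)\) in \(L^q_{\mathrm{loc}}\). This follows from uniform \(L^q\) integrability of \(\log|z-y|\) against probability measures, as in Lemma~\ref{lem:jensen}. For \(\mu=\frac{p}{\pi}(1+|y|^2)^{-2}\dd y\), this potential is the radial solution of \(-\Delta W=\mu\) with zero mean on the unit circle. Since \(p=4\pi/A\), it equals \(\frac1A\log\frac{2}{1+|z|^2}\), which gives \eqref{eq:Wlimit}.

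\emph{Step 2: \(A_j\to A\).} By Proposition~\ref{prop:height} and \eqref{eq:diagonal}, \(s_j=A_j+M_j^2W_j\) is within \(\eps\) of \(A\) on most of the \(\widehat\nu_j\)-mass of \(B_{\delta_j}(a_0)\), and in particular on a set of \(\widehat\nu_j\)-mass \(\ge p/4\) in \(B_1\). By Step 1, \(M_j^2W_j\to0\) in \(L^1_{\mathrm{loc}}\); more usefully, \(M_j^2|W_j|\) is small outside a set of small measure. The difficulty is that \(\widehat\nu_j\) and Lebesgue measure need not be comparable a priori. To handle this, I would use the bound from Step 1 that \(M_j^2\sup_{B_T}W_j^+\to0\), since \(W_j\) is bounded above by the potential of a probability measure with a controlled singularity. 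I would then use monotonicity: \(s_j\le A_j+o(1)\) on \(B_T\), so \(A_j\ge A-o(1)\). For the reverse inequality, note that \(W_j\ge -C_T\) fails only on small sets. On \(\{|s_j-A|\le\eps\}\) we have \(A_j\le A+\eps+M_j^2|W_j|\), and \(\widehat\nu_j\) charges \(\{W_j\ge -K\}\) substantially, by \(L^1\) convergence and mass \(p/2\) in \(B_1\). Together these give \(A_j\to A\).

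\emph{Step 3: \(k_j\) and the scale.} Now \(f_j=k_js_je^{(A_j+s_j)W_j}\), with \(s_j\to A\) and \((A_j+s_j)W_j\to 2AW=2\log\frac2{1+|z|^2}\) in measure on compacts. The exponential is controlled in \(L^1_{\mathrm{loc}}\) via the local Jensen bound \eqref{eq:localjensen} transported to rescaled variables, which gives uniform integrability. Then \(f_j\to k\,A\,\frac{4}{(1+|z|^2)^2}\) along subsequences with \(k_j\to k\), while \(f_j\dd z\weak\mu\). Comparing the two limits forces \(4kA=p/\pi=4/A\), so \(k=A^{-2}\), and \eqref{eq:strongsource} follows. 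Boundedness of \(k_j\) has to be shown first, by testing against \(\widehat\nu_j(B_1)=p/2\) together with the lower bound \(e^{(A_j+s_j)W_j}\ge c\) on a set of positive measure. Finally, taking logarithms in the definition of \(k_j\) gives
\[
 2\log r_j+\log\theta_j+c_j^2-2\log M_j=O(1).
\]
Since \(c_j^2=A_j^2/M_j^2\), \(M_j^2\log(1/\theta_j)\to0\) and \(M_j^2\log M_j\to0\) (Proposition~\ref{prop:strict}), multiplying by \(M_j^2\) yields \(M_j^2\log(1/r_j)\to A^2/2\).

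\emph{Main obstacle.} The hardest step is Step 3's uniform integrability of \(f_j\), which is needed to upgrade weak convergence to \(L^1_{\mathrm{loc}}\) convergence and to identify \(k\). I would try to obtain it from Step 1 via a uniform upper bound on \(W_j\) on compacts, plus equi-integrability from the \(L^q\) bound on the density away from a possible concentration. Any residual concentration is excluded because \(\mu\) is nonatomic and \(\widehat\nu_j\) already converges to \(\mu\).
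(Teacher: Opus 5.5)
Your Step~1 and the closing logarithmic computation agree with the paper. Step~2, however, has a genuine gap, and it also undermines Step~3.

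\emph{The gap in Step~2.} The lower bound \(A_j\ge A-o(1)\) rests on the claim that \(M_j^2\sup_{B_T}W_j^+\to0\) ``from Step~1''. Step~1 yields only \(L^q_{\mathrm{loc}}\) convergence, and the potential of a probability measure has no upper bound without control of its density. Nothing available at this stage excludes a secondary bubble near \(b_j\) of height \(\beta'_j\) with \(M_j\beta'_j\to\infty\). Its \(\widehat\nu_j\)-mass is about \(4\pi/(M_j\beta'_j)\to0\), so neither weak convergence nor Proposition~\ref{prop:height} sees it. Yet \(s_j\approx M_j\beta'_j\) there, so \(M_j^2\sup_{B_T}W_j\) is unbounded. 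Excluding such bubbles is exactly what Lemma~\ref{lem:diskheight} does later, and its proof uses \eqref{eq:scalematch}, so it cannot be invoked here.

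\emph{Why Step~3 does not close.} In \(f_j=k_js_je^{(A_j+s_j)W_j}\), the exponent contains the term \(M_j^2W_j^2\), which is not controlled without a bound on \(s_j\). So uniform integrability of the untruncated \(f_j\) cannot be obtained directly. Moreover, \eqref{eq:localjensen} is a bound at the original scale with exponent below \(A\). After the change of variables it gives only \(\int_{B_T}e^{\Lambda W_j}\le Cr_j^{-2}e^{-\Lambda A_j/M_j^2}\), which is not uniform in \(j\). Your remark that nonatomicity of \(\mu\) excludes residual concentration does not help, because the problematic concentration carries vanishing mass.

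\emph{How the paper avoids this.} It never uses a sup bound.
\begin{enumerate}
\item It proves \(\int_{B_T}e^{\alpha(W_j)_+}\le C_{T,\alpha}\) for \emph{every} \(\alpha\). This uses a new Jensen argument on small rescaled balls whose \(\widehat\nu_j\)-mass is below \(m\) with \(\alpha m<4\pi\), which is possible because \(\mu\) is nonatomic.
\item It uses the pointwise bound \(W_j\ge-C_T\) on all of \(B_T\). This follows at once from the representation; it does not merely hold off small sets. It gives \(A_j\) bounded. A case distinction on \(A_j/M_j^2\) then bounds \(\kappa_j=k_j(A_j+M_j^2)\) without knowing \(A_j\to A\).
\item It truncates to \(\{s_j\le A+\eps\}\), where \(f_j\le C\kappa_j(1+(W_j)_+)e^{(2A+3\eps)(W_j)_+}\). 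This is bounded in \(L^q_{\mathrm{loc}}\).
\item With \(\mathcal E_j=B_T\cap\{s_j\le A+\eps\}\), H\"older's inequality gives \(\int_{\mathcal E_j}|s_j-A_j|f_j\le M_j^2\|W_j\|_{L^{q'}(B_T)}\|f_j\ind_{\mathcal E_j}\|_{L^q}\to0\). Combined with \(\int_{\mathcal E_j}|s_j-A|f_j\to0\) from Proposition~\ref{prop:height}, this yields \(A_j\to A\).
\end{enumerate}
Vitali's theorem applied to the truncated densities, together with \(\widehat\nu_j(B_T\cap\{s_j>A+\eps\})\to0\), then gives \eqref{eq:strongsource} and \(k=A^{-2}\).
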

\begin{proof}
In rescaled coordinates, subtract the {mean on the unit circle} from the Green representation formula for \(w_j=u_j/M_j\). The circular mean identity for the logarithm gives, on each fixed ball \(B_T\),
\begin{equation}\label{eq:Wpotential}
 W_j(z)=\frac1{2\pi}\int_{\mathbb{R}^2}
 \log\frac{\max\{1,|y|\}}{|z-y|}\dd\widehat\nu_j(y)+O_T(r_j).
\end{equation}
The error is uniform for \(z\in B_T\). Indeed, in the difference of the regular parts, both values of the first variable lie in a fixed compact subset of \(\Om\) and are \(O_T(r_j)\) apart, while \(\nu_j\) has unit total mass. As \(|y|\to\infty\), the kernel above is \(O_T(|y|^{-1})\). Since \(\widehat\nu_j(\R^2)=1\), the part of the integral over \(|y|>L\) has absolute value at most \(C_T/L\), uniformly in \(j\).
For \(y\) in a bounded set and every finite \(q\), the kernel depends continuously on \(y\) as an element of \(L^q(B_T)\). To see this, truncate the logarithmic singularity on \(|z-y|<\eta\). The truncated kernel is uniformly continuous, and the \(L^q\) norm of the truncation error is bounded by a constant multiple of
\((\int_0^\eta |\log t|^qt\dd t)^{1/q}\), independently of \(y\). Cover the bounded set of \(y\)-values by finitely many neighborhoods on each of which the \(L^q\) distance between kernels is less than a prescribed positive number, and choose a subordinate continuous partition of unity. Approximating by the kernels at the neighborhood centers gives a uniformly small \(L^q\) error. The integrals of the {functions in the partition of unity} converge by weak convergence of the measures. Thus the kernel integrals converge in \(L^q\). Finally, letting the outer cutoff radius tend to infinity gives \(W_j\to W\) in \(L^q(B_T)\), with \(W\) obtained by integrating the kernel in \eqref{eq:Wpotential} {with respect to} \(\mu\).
Since the density of \(\mu\) is radial, so is this integral. Its {mean on the unit circle} is zero,
as follows directly from the circular mean identity for the logarithmic kernel.
Writing \(W=W(R)\), we deduce from \(-\Delta W=\mu\) and \eqref{eq:profile} that
\[
 -2\pi R W'(R)=\mu(B_R)=\frac{pR^2}{1+R^2},
 \qquad W'(R)=-\frac2A\frac{R}{1+R^2}.
\]
Integration gives \(W(R)=C-A^{-1}\log(1+R^2)\). The condition \(W(1)=0\) then yields
\(C=A^{-1}\log2\), proving \eqref{eq:Wlimit}. Moreover,
the inequality \(|z-y|\le(1+T)\max\{1,|y|\}\) gives
\begin{equation}\label{eq:Wlower}
 W_j\ge-C_T\quad\hbox{on }B_T.
\end{equation}

Take a fixed ball of positive \(\mu\) measure. Its intersection with \(\{|s_j-A|>\eps\}\) has \(\widehat\nu_j\) measure tending to zero, so, for all sufficiently large \(j\), the ball contains a point where \(s_j\le A+\eps\). The identity \(s_j=A_j+M_j^2W_j\) and \eqref{eq:Wlower} give
\(0\le A_j\le A+\eps+C_TM_j^2\). Thus \(A_j\) is bounded.

We first prove that, for every fixed \(\alpha>0\) and \(T>0\),
\begin{equation}\label{eq:alllinear}
 \int_{B_T}e^{\alpha(W_j)_+}\dd z\le C_{T,\alpha}.
\end{equation}
Choose \(m>0\) with \(\alpha m<4\pi\). Since \(\mu\) is nonatomic, \(B_T\) can be covered by finitely many small balls \(B_\eta(z_0)\) {such that the corresponding closed balls of twice the radius have} \(\mu\) mass strictly less than \(m\). By the Portmanteau theorem, the corresponding \(\widehat\nu_j\) masses are also less than \(m\) for sufficiently large \(j\). On \(B_\eta(z_0)\), \eqref{eq:Wpotential} gives
\[
 W_j(z)\le C+\frac1{2\pi}\int_{B_{2\eta}(z_0)}
 \log\frac{C}{|z-y|}\dd\widehat\nu_j(y).
\]
{Applying Jensen's inequality to a probability measure constructed as in the local argument of} Lemma~\ref{lem:jensen}, we bound the exponential integral by
\[
 C|B_\eta|+\frac Cm\int_{B_{2\eta}(z_0)}
 \int_{B_\eta(z_0)}|z-y|^{-\alpha m/(2\pi)}\dd z\dd\widehat\nu_j(y).
\]
The inner integral is uniformly bounded because \(\alpha m/(2\pi)<2\). Summing over the finite cover proves \eqref{eq:alllinear}; replacing \(W_j\) by \((W_j)_+\) adds at most the area of the set.

Set \(\kappa_j=k_j(A_j+M_j^2)\). We first show that \(\kappa_j\) is bounded. Fix \(B_T\), choose \(C_T\) from \eqref{eq:Wlower}, and let \(L=2C_T+1\). For indices with \(A_j/M_j^2\le L\), we have \(c_j\le LM_j\), hence
\[
 \kappa_j=r_j^2\theta_j(A_j/M_j^2+1)e^{c_j^2}\le C.
\]
For the remaining indices, \(s_j\ge A_j/2\) throughout \(B_T\), and the exponent in \eqref{eq:fj} is bounded below by \(-2A_jC_T\). Since \(A_j+M_j^2\le C_LA_j\), we have \(f_j\ge c_T\kappa_j\) there. Integration over \(B_T\) gives \(c_T\kappa_j|B_T|\le\int_{B_T}f_j\le1\), proving boundedness of \(\kappa_j\).

Fix \(\eps>0\). On \(\{s_j\le A+\eps\}\), the density formula \eqref{eq:fj} gives, for all sufficiently large \(j\),
\begin{equation}\label{eq:truncdensity}
 f_j\ind_{\{s_j\le A+\eps\}}
 \le C\kappa_j(1+(W_j)_+)e^{(2A+3\eps)(W_j)_+}.
\end{equation}
For \(W_j\ge0\), use \(s_j=A_j+M_j^2W_j\le(A_j+M_j^2)(1+W_j)\), \(A_j\le A+2\eps\), and \(A_j+s_j\le2A+3\eps\). For \(W_j<0\), use \(s_j\le A_j\) and the fact that the exponential factor is at most one. By \eqref{eq:alllinear}, the truncated densities are uniformly bounded in \(L^q_{\mathrm{loc}}\) for every finite \(q\).

Fix a ball with \(\mu(B_T)>0\). Proposition~\ref{prop:height} gives \(\widehat\nu_j(B_T\cap\{s_j>A+\eps\})\to0\). Indeed, the ball corresponding to \(B_T\) in the original coordinates eventually lies in every fixed small ball about \(a_0\); let first \(j\to\infty\) and then the fixed radius tend to zero. For \(q>1\), let \(q'=q/(q-1)\) be the conjugate exponent. H\"older's inequality gives
\[
 \int_{B_T}|s_j-A_j|f_j\ind_{\{s_j\le A+\eps\}}\dd z
 \le M_j^2\|W_j\|_{L^{q'}(B_T)}
 \|f_j\ind_{\{s_j\le A+\eps\}}\|_{L^q(B_T)}\to0.
\]
Set \(\mathcal E_j=B_T\cap\{s_j\le A+\eps\}\). By Proposition~\ref{prop:height}, for every \(\eta>0\),
\[
 \int_{\mathcal E_j}|s_j-A|f_j
 \le\eta\widehat\nu_j(B_T)
 +(2A+\eps)\widehat\nu_j(B_T\cap\{|s_j-A|>\eta\}).
\]
The right-hand side tends to zero as first \(j\to\infty\) and then \(\eta\downarrow0\). Combining this with the preceding H\"older estimate gives
\[
 |A_j-A|\int_{\mathcal E_j}f_j
 \le\int_{\mathcal E_j}|s_j-A_j|f_j
    +\int_{\mathcal E_j}|s_j-A|f_j\to0.
\]
Since \(\int_{\mathcal E_j}f_j\to\mu(B_T)>0\), we conclude that \(A_j\to A\).

Boundedness of \(\kappa_j\) and \(A_j\to A>0\) imply boundedness of \(k_j\). Moreover, \(\liminf_j k_j>0\). Otherwise, a subsequence would satisfy \(k_j\to0\), hence \(\kappa_j\to0\); by \eqref{eq:truncdensity} and \eqref{eq:alllinear}, the \(\widehat\nu_j\) measure of \(B_T\cap\{s_j\le A+\eps\}\) would tend to zero, contradicting its positive limit.
Choose a subsequence with \(k_j\to k\in(0,\infty)\), and then a further one with \(W_j\to W\) almost everywhere on every compact set. Thus \(s_j=A_j+M_j^2W_j\to A\) almost everywhere. For fixed \(\eps>0\), we have \(\ind_{\{s_j\le A+\eps\}}\to1\) and \(f_j\ind_{\{s_j\le A+\eps\}}\to kAe^{2AW}\). A uniform \(L^q\) bound for some \(q>1\) ensures uniform integrability, so Vitali's convergence theorem gives local \(L^1\) convergence of the truncated densities. Since \(\int_{B_T\cap\{s_j>A+\eps\}}f_j\dd z\to0\), the densities \(f_j\) themselves converge locally in \(L^1\). Comparing with the density of \(\mu\) and the formula for \(W\), we find \(k=A^{-2}\), proving \eqref{eq:strongsource}.

Finally, take logarithms in the definition of \(k_j\) and multiply by \(M_j^2\):
\[
 M_j^2\log k_j
 =-2M_j^2\log(1/r_j)-M_j^2\log(1/\theta_j)
 -2M_j^2\log M_j+A_j^2.
\]
Proposition~\ref{prop:strict} gives \(M_j^2\log(1/\theta_j)\to0\). Also, \(M_j^2\log k_j\to0\) and \(M_j^2\log M_j\to0\). Using \(A_j\to A\), we obtain \(M_j^2\log(1/r_j)\to A^2/2\), as asserted in \eqref{eq:scalematch}.
\end{proof}

\subsection{Local upper bounds and selection of local maxima}\label{sec:representative}
To {strengthen the convergence} in Proposition~\ref{prop:center} to local \(C^2\) convergence, we need uniform upper bounds on disks. The following estimate will be used in Lemma~\ref{lem:diskheight} to extend an \(O(M_j^{-1})\) boundary bound to the interior.

\begin{lemma}\label{lem:lowdisk}
Let the disks \(D_j=B_{R_j}(c_j^*)\) lie in a fixed compact subset of \(\Om\). Suppose that \(u_j\) attains its maximum on \(\cl D_j\) at an interior point \(y_j\), and that
\[
 B_j=u_j(y_j)\to\infty,\qquad
 \sup_{\partial D_j}u_j=o(B_j),\qquad
 \log(1/R_j)=o(B_j^2),\qquad
 \log\theta_j=o(B_j^2).
\]
Assume the uniform upper bounds for the total mass and parameter in Lemma~\ref{lem:mass}. Then
\begin{equation}\label{eq:lowdiskmatch}
 \log\frac{\theta_jB_j^2}{4}
 =4\pi R(y_j)-1
 -B_j\int_{\Om\setminus B_{\exp(-B_j^2/4)}(y_j)}
 G(y_j,z)\rho_j(z)\dd z+o(1).
\end{equation}
\end{lemma}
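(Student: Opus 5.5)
The plan is to reduce Lemma~\ref{lem:lowdisk} to Lemmas~\ref{lem:budget} and \ref{lem:matching} at the point \(x_j=y_j\), \(\beta_j=B_j\), with \(\tau=1/4\). The relation \eqref{eq:lowdiskmatch} is then exactly \eqref{eq:matching} with \(r_j(1/4)=e^{-B_j^2/4}\). Since \(y_j\) is an interior maximum of \(u_j\) on \(\cl D_j\), we have \(\nabla u_j(y_j)=0\). The hypotheses \(\log\theta_j=o(B_j^2)\), \(\sup_jM_j<\infty\) and \(\sup_j\theta_j<\infty\) (the last two from Lemma~\ref{lem:mass}) are given. Two things remain to verify: the rescaled convergence in \eqref{eq:localassumptions} and the weighted integral condition \eqref{eq:budget}. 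I would prove both by repeating the argument of Lemma~\ref{lem:maxbubble}, with \(\cl D_j\) playing the role of \(\Om\) and \(u_j\le B_j\) used only on \(D_j\).

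\textbf{Bubble at \(y_j\).} Set \(\mu_j=(\theta_jB_j^2e^{B_j^2})^{-1/2}\). Then \(\log(1/\mu_j)=B_j^2/2+o(B_j^2)\), while \(\log(1/R_j)=o(B_j^2)\), so \(R_j/\mu_j\to\infty\) and the rescaled disks exhaust \(\R^2\). On them, \(\eta_j=B_j(u_j(y_j+\mu_jz)-B_j)\le0=\eta_j(0)\) and \(0\le-\Delta\eta_j\le1\). The compactness paragraph at the start of Section~\ref{sec:local} yields a \(C^2_{\mathrm{loc}}\) limit \(\eta\) with \(-\Delta\eta=e^{2\eta}\). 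To get finite total mass I would use the Green formula at \(y_j\) exactly as in Lemma~\ref{lem:maxbubble}: \(G(y_j,y_j+\mu_jz)/B_j^2\to1/(4\pi)\) locally uniformly away from \(0\), which gives \(\int e^{2\eta}\le4\pi\). Theorem~\ref{tool:chenli} then gives \(\eta=U\).

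\textbf{Budget.} Fix \(\sigma\in(0,1/2)\) and set \(r_j=e^{-\sigma B_j^2}\), so that \(r_j\ll R_j\) and \(B_{r_j}(y_j)\subset D_j\). The argument of Lemma~\ref{lem:maxbubble} (inner bubble contribution \(m_T/(4\pi)\), Green lower bound \(\sigma B_j^2/(2\pi)-C\) on the annulus, and \(u_j(y_j)=B_j\)) gives \(B_j\int_{B_{r_j}(y_j)}\rho_j\to4\pi\). Since \(u_j\le B_j\) on \(D_j\), this yields \(\limsup\int_{B_{r_j}(y_j)}u_j\rho_j\le4\pi\), and the bubble gives the matching lower bound. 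Taking \(\sigma=1/32\) verifies \eqref{eq:budget}. This is the only place the maximality of \(y_j\) on \(D_j\) is used beyond criticality.

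\textbf{Main obstacle.} The difficulty is that Lemma~\ref{lem:budget} and the comparison of Theorem~\ref{tool:DTlocal} use \(u_j\le\beta_j\)-type control only inside \(B_{L_j}(y_j)\). In the proof of Lemma~\ref{lem:budget}, however, the blow-up of \(D_j^2V_j\) produces a second bubble and needs the limit \(\zeta\le0\) only inside \(B_{L_j}\subset D_j\), which is fine. The role of \(\sup_{\partial D_j}u_j=o(B_j)\) is to ensure that no mass outside \(D_j\) destroys the lower bound \(u_j\ge cB_j\); I expect it is not actually needed, since that bound comes from the Green kernel with \(G\ge0\). I would double-check that every use of \(u_j\le\beta_j\) in Lemmas~\ref{lem:budget} and \ref{lem:matching} is confined to \(B_{L_j}(y_j)\subset D_j\). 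The boundary hypothesis may instead be needed to guarantee that \(y_j\) is interior and that \(R_j\) is not too small, which is already covered by \(\log(1/R_j)=o(B_j^2)\). With these checks in place, Lemma~\ref{lem:matching} at \(\tau=1/4\) gives \eqref{eq:lowdiskmatch}.
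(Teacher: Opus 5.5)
There is a genuine gap, and it sits exactly at the hypothesis you suggest may be unnecessary. In the bubble step you assert that \(R_j/\mu_j\to\infty\) makes the rescaled disks exhaust \(\R^2\). In the budget step you assert that \(e^{-\sigma B_j^2}\ll R_j\) gives \(B_{e^{-\sigma B_j^2}}(y_j)\subset D_j\). Both statements confuse the radius \(R_j\) with \(d_j=\dist(y_j,\partial D_j)\). A priori the maximum point \(y_j\) may lie arbitrarily close to \(\partial D_j\).

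The bound \(\eta_j\le0=\eta_j(0)\) is available only on \((D_j-y_j)/\mu_j\). This set contains every fixed ball \(B_T\) only if \(d_j/\mu_j\to\infty\). Without that, the compactness argument at the start of Section~\ref{sec:local}, which needs an upper bound on fixed balls, does not apply, and you cannot conclude \(\eta_j\to U\). Similarly, your upper bound \(\int_{B_{r}(y_j)}u_j\rho_j\le B_j\int_{B_r(y_j)}\rho_j\) with \(r=e^{-\sigma B_j^2}\) requires \(u_j\le B_j\) on that ball, that is, \(r<d_j\). This needs \(\log(1/d_j)=o(B_j^2)\), not just \(\log(1/R_j)=o(B_j^2)\). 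Once the bubble is known, the flux limit \(B_j\int_{B_r(y_j)}\rho_j\to4\pi\) itself only uses the Green representation at \(y_j\); it is the passage to the weighted integral that needs the containment. Your worry about Lemma~\ref{lem:budget} is misplaced. There the bound \(\zeta_j\le o(1)\) comes from the maximality of the weighted quantity in that proof, not from \(u_j\le\beta_j\), so that lemma can be used as a black box. The real issue is upstream.

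Both lower bounds on \(d_j\) come from \(\sup_{\partial D_j}u_j=o(B_j)\), and the paper proves them before applying the two lemmas. First, suppose \(d_j/\mu_j\) stays bounded, and rescale \(u_j/B_j\) by \(\mu_j\) about the nearest boundary point. The rescaled functions take values in \([0,1]\), have right-hand side at most \(B_j^{-2}\), and have boundary values \(o(1)\). The rescaled boundary flattens to a line because \(R_j/\mu_j\to\infty\), and the functions equal \(1\) at a point at bounded distance from it. Boundary estimates exclude that distance tending to zero; otherwise odd reflection and Liouville's theorem give a contradiction. Second, with the bubble at \(y_j\) in hand, compare the Green functions of \(\Om\) and \(D_j\). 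The harmonic extension \(h_j\) of \(u_j|_{\partial D_j}\) satisfies
\[
 \sup_{\partial D_j}u_j\ge h_j(y_j)\ge\int_{B_{T\mu_j}(y_j)}(H_{D_j}-H_\Om)(y_j,z)\rho_j(z)\dd z.
\]
The explicit Robin function \(R_{D_j}(y_j)=-\frac1{2\pi}\log\bigl(d_j(2-d_j/R_j)\bigr)\) turns this into \(\log(1/d_j)\le CB_j\sup_{\partial D_j}u_j+C=o(B_j^2)\). Only then does \(B_{e^{-\sigma B_j^2}}(y_j)\subset D_j\) hold for every \(\sigma\in(0,1/2)\). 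From that point on, your reduction to Lemmas~\ref{lem:budget} and \ref{lem:matching} with \(\sigma=1/32\) and \(\tau=1/4\) goes through as you describe.
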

\begin{proof}
Set \(\mu_j=(\theta_jB_j^2e^{B_j^2})^{-1/2}\) and \(d_j=\dist(y_j,\partial D_j)\).
Since \(\log\theta_j=o(B_j^2)\) and \(\log(1/R_j)=o(B_j^2)\),
\[
 \log\frac{R_j}{\mu_j}
 =\frac{B_j^2}2+\frac12\log\theta_j+\log B_j-\log(1/R_j)
 =\frac{B_j^2}2+o(B_j^2)\to\infty.
\]

After rescaling the disk by \(\mu_j\), the right-hand side of the equation for \(u_j/B_j\) satisfies
\[
 0\le\frac{\mu_j^2}{B_j}\rho_j
 =\frac1{B_j^2}\frac{u_j}{B_j}e^{u_j^2-B_j^2}\le B_j^{-2},
\]
because \(u_j\le B_j\) in \(D_j\).

We first prove that \(d_j/\mu_j\to\infty\). If this ratio were bounded along a subsequence, rescale \(u_j/B_j\) by \(\mu_j\) about the nearest boundary point to \(y_j\). The rescaled functions lie between zero and one, their boundary supremum is \(o(1)\), and the absolute value of the right-hand side is at most \(B_j^{-2}\). Subtract the harmonic extension of the boundary values. The resulting functions are bounded, vanish on the boundary, and have right-hand sides tending uniformly to zero. By the maximum principle, the subtracted harmonic functions have supremum \(o(1)\).
Since \(R_j/\mu_j\to\infty\), the rescaled boundaries converge locally smoothly to a line, so the constants in local boundary estimates can be chosen independently of \(j\). If the rescaled distance of the maximum point from the boundary tends to zero, these estimates force its value to tend to zero, contradicting the normalized maximum value one. If the distance tends to a finite positive number, a subsequence converges locally up to the boundary to a bounded harmonic function on a half-plane, with zero boundary values and value one at the limiting maximum point. Odd reflection across the boundary gives a bounded harmonic function on \(\R^2\). Liouville's theorem makes it constant, and the zero boundary values force it to vanish, again a contradiction. Hence \(d_j/\mu_j\to\infty\).

The functions
\(\eta_j(z)=B_j(u_j(y_j+\mu_jz)-B_j)\)
are now eventually defined on every fixed ball and satisfy
\(\eta_j\le0=\eta_j(0)\) and \(0\le-\Delta\eta_j\le1\).
The local \(W^{2,q}\) and Schauder estimates at the beginning of Section~\ref{sec:local} give a subsequence converging in \(C^2_{\mathrm{loc}}(\R^2)\).
Positivity of the Green function at the interior point \(y_j\), together with
\(\log(1/\mu_j)/B_j^2\to1/2\), shows as in Lemma~\ref{lem:maxbubble} that the limit \(\eta\) satisfies \(\int_{\R^2}e^{2\eta}\le4\pi\). Here \(y_j\) stays in a fixed compact subset of \(\Om\), and \(u_j\le B_j\) in \(D_j\), so the rescaled factor \((u_j/B_j)e^{u_j^2-B_j^2}\) is at most one. Theorem~\ref{tool:chenli} therefore gives \(\eta_j\to U\) in \(C^2_{\mathrm{loc}}\).

To estimate the distance to \(\partial D_j\), let \(h_j\) be the harmonic extension of \(u_j|_{\partial D_j}\) to \(D_j\). Subscripts indicate the domain: \(G_\Om=G\), while \(G_{D_j}\), \(H_{D_j}\), and \(R_{D_j}\) are the Dirichlet Green function of \(D_j\), its regular part with the sign convention \eqref{eq:green}, and its diagonal value. Subtracting the two Green representation formulas gives
\begin{equation}\label{eq:differencegreens}
 h_j(y_j)=\int_{D_j}(G_\Om-G_{D_j})(y_j,z)\rho_j(z)\dd z
 +\int_{\Om\setminus D_j}G_\Om(y_j,z)\rho_j(z)\dd z.
\end{equation}
By domain monotonicity and positivity of the Green function, every term on the right is nonnegative. With convention \eqref{eq:green}, the Robin function of the disk is
\[
 R_{D_j}(y_j)=-\frac1{2\pi}\log\bigl(d_j(2-d_j/R_j)\bigr).
\]
The explicit disk Green function also gives, for fixed \(T\), on \(B_{T\mu_j}(y_j)\),
\[
 H_{D_j}(y_j,z)=R_{D_j}(y_j)+O(T\mu_j/d_j)
 =R_{D_j}(y_j)+o(1).
\]
Indeed, after translating to a disk of radius \(R_j\) centered at the origin, its regular part is
\(-\frac1{2\pi}\log(|R_j^2-(y_j-c_j^*)\overline{(z-c_j^*)}|/R_j)\); 
The difference between this expression and its value at \(z=y_j\) is \(O(|z-y_j|/d_j)\).
The regular part for the original domain is bounded. In \eqref{eq:differencegreens}, retain only the integral over \(B_{T\mu_j}(y_j)\), with \(T\) fixed. The integral of \(\rho_j\) over this ball is \((m_T+o(1))/B_j\), so
\[
 \log(1/d_j)\le C B_j h_j(y_j)+C
 \le C B_j\sup_{\partial D_j}u_j+C=o(B_j^2).
\]
The same conclusion holds if \(d_j\) has a positive lower bound. For every fixed \(0<\sigma<1/2\),
\[
 \frac{e^{-\sigma B_j^2}}{d_j}
 =\exp\{-\sigma B_j^2+o(B_j^2)\}\to0,
\]
and hence \(B_{\exp(-\sigma B_j^2)}(y_j)\subset D_j\). On these balls, \(u_j\le B_j\), and we have already shown that the functions \(\eta_j\) obtained by rescaling about \(y_j\) by \(\mu_j\) converge to \(U\).
Apply the estimates from the end of the proof of Lemma~\ref{lem:maxbubble}. In the Green representation formula, first retain the integral over \(B_{T\mu_j}(y_j)\), and then use \(G(y_j,z)\ge\sigma B_j^2/(2\pi)-C\) on \(B_{\exp(-\sigma B_j^2)}(y_j)\setminus B_{T\mu_j}(y_j)\). The remaining integral is nonnegative and may be discarded. Letting first \(j\to\infty\) and then \(T\to\infty\) gives
\[
 B_j\int_{B_{\exp(-\sigma B_j^2)}(y_j)}\rho_j\to4\pi,
 \qquad \int_{B_{\exp(-\sigma B_j^2)}(y_j)}u_j\rho_j\to4\pi.
\]
Taking \(\sigma=1/32\) verifies the hypotheses of Lemma~\ref{lem:budget}. Lemma~\ref{lem:matching} with \(\tau=1/4\) then gives \eqref{eq:lowdiskmatch}.
\end{proof}

We now apply Lemma~\ref{lem:lowdisk} to obtain the interior bound needed to select local maximum points.
\begin{lemma}\label{lem:diskheight}
Let \(r_j\le\ell_j\to0\), and set \(D_j=B_{\ell_j}(b_j)\). Suppose that there are fixed constants \(c_0,C_0>0\) such that
\[
 c_0\le M_j^2\log(1/\ell_j)\le C_0,
 \qquad \sup_{\partial D_j}u_j\le C_0/M_j,
\]
Then \(\sup_{D_j}u_j\le C/M_j\).
\end{lemma}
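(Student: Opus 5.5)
We argue by contradiction, using Lemma~\ref{lem:lowdisk} to turn a large interior maximum into a bubble and then the matching relation \eqref{eq:lowdiskmatch} to show that the bubble carries too much mass for the available budget. Suppose, along a subsequence, that \(B_j:=\max_{\cl D_j}u_j\) satisfies \(M_jB_j\to\infty\). Since \(\sup_{\partial D_j}u_j\le C_0/M_j=o(B_j)\), the maximum is attained at an interior point \(y_j\in D_j\), and \(B_j\to\infty\) because \(M_j\to0\). The disks lie in a fixed compact subset of \(\Om\), since \(b_j\to a_0\) and \(\ell_j\to0\).

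\emph{Scale hypotheses.} The first step is to check the scale conditions of Lemma~\ref{lem:lowdisk}. The assumption \(M_j^2\log(1/\ell_j)\le C_0\) gives \(\log(1/\ell_j)\le C_0/M_j^2=o(B_j^2)\), because \(M_jB_j\to\infty\). Similarly, Proposition~\ref{prop:strict} gives \(M_j^2\log(1/\theta_j)\to0\), so \(\log(1/\theta_j)=o(M_j^{-2})=o(B_j^2)\). The positive part of \(\log\theta_j\) is bounded by Lemma~\ref{lem:mass}. Lemma~\ref{lem:lowdisk} therefore applies and yields \eqref{eq:lowdiskmatch} at \(y_j\).

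\emph{The contradiction from \eqref{eq:lowdiskmatch}.} The proof of Lemma~\ref{lem:lowdisk} also gives \(B_j\int_{B_{\exp(-\sigma B_j^2)}(y_j)}\rho_j\to4\pi\) for each fixed \(\sigma<1/2\). Relation \eqref{eq:lowdiskmatch} then says
\[
 \log(\theta_jB_j^2)=O(1)-B_jI_{\mathrm{out},j}.
\]
We bound \(I_{\mathrm{out},j}\) from below using the mass near the atom, which is at least of order \(p\,M_j\) after normalization. Most of this mass lies within distance \(\delta_j\) of \(y_j\), and there \(G(y_j,\cdot)\ge\frac1{2\pi}\log(1/\delta_j)-C\). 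We also need the portion of that mass lying outside \(B_{\exp(-B_j^2/4)}(y_j)\). If a fixed fraction \(cpM_j\) of the \(\rho_j\)-mass lies at distance comparable to \(r_j\) from \(b_j\), then \(|y_j-b_j|\le\ell_j\) gives roughly
\[
 B_jI_{\mathrm{out},j}\gtrsim B_jM_j\log(1/\ell_j)\gtrsim c_0B_j/M_j .
\]
Such a fraction should exist by Proposition~\ref{prop:profile}: the rescaled mass spreads over scale \(r_j\), whereas the bubble at \(y_j\) lives on a scale \(e^{-B_j^2/2}\ll r_j\). With this, \(\log(\theta_jB_j^2)\le C-c\,B_j/M_j\). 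On the other hand \(\log(\theta_jB_j^2)\ge-\log(1/\theta_j)=-o(M_j^{-2})\). These two bounds conflict once \(B_j/M_j\gg M_j^{-2}\), that is, once \(M_jB_j\to\infty\), which is our assumption. This contradiction gives \(M_jB_j\le C\).

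\emph{Main obstacle.} The delicate step is the lower bound on \(I_{\mathrm{out},j}\). We must show that a definite portion of the \(\nu_j\)-mass near \(a_0\) lies in \(B_{\delta_j}(a_0)\) but outside the tiny ball \(B_{\exp(-B_j^2/4)}(y_j)\), at distance at most about \(\ell_j\) from \(y_j\). The bubble at \(y_j\) carries only \(\nu_j\)-mass \(4\pi/(B_jM_j)\to0\), so almost all of the mass \(p\) avoids the tiny ball. The subtle part is showing it sits at distance \(\lesssim\ell_j\) from \(y_j\), so that the Green function there is \(\gtrsim\log(1/\ell_j)\). For this I would use \eqref{eq:halfnormalization}: the ball \(B_{r_j}(b_j)\subset B_{2\ell_j}(y_j)\) carries mass \(p/2\), which yields the needed lower bound \(I_{\mathrm{out},j}\ge \frac{p}{2}M_j\bigl(\frac1{2\pi}\log\frac1{2\ell_j}-C\bigr)-O(1/B_j)\).
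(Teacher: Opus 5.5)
Your proposal is correct and follows the paper's proof. You argue by contradiction, check the scale hypotheses to invoke Lemma~\ref{lem:lowdisk}, and bound $I_{\mathrm{out},j}$ below by $c/M_j$ using $\nu_j$-mass of order $p$ lying within $O(\ell_j)$ of $y_j$ but outside $B_{\exp(-B_j^2/4)}(y_j)$; multiplying \eqref{eq:lowdiskmatch} by $M_j^2$ then contradicts \eqref{eq:strict}.

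The only variation is how you discard the bubble ball. You use the half-mass ball of \eqref{eq:halfnormalization} together with the $\rho_j$-mass $(4\pi+o(1))/B_j=o(M_j)$ of $B_{\exp(-B_j^2/4)}(y_j)$, taken from the proof of Lemma~\ref{lem:lowdisk}. The paper instead uses $B_{Tr_j}(b_j)$, $h_j/r_j\to0$ from \eqref{eq:scalematch}, and nonatomicity of $\mu$; your version is slightly more economical.

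One correction to your last display: the subtracted term is really $O\bigl(B_j^{-1}\log(1/\ell_j)\bigr)$, not $O(1/B_j)$. This is still $o\bigl(M_j\log(1/\ell_j)\bigr)$ because $M_jB_j\to\infty$, so the conclusion is unaffected.
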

\begin{proof}
Suppose, to the contrary, that a subsequence satisfies \(M_jB_j\to\infty\), where \(B_j:=\max_{\cl D_j}u_j\). The maximum is attained at an interior point \(y_j\), and
\[
 \frac{\log(1/\ell_j)}{B_j^2}
 \le\frac{C_0}{(M_jB_j)^2}\to0,
 \qquad
 \frac{\log(1/\theta_j)}{B_j^2}
 =\frac{M_j^2\log(1/\theta_j)}{(M_jB_j)^2}\to0.
\]
The boundary values are \(o(B_j)\), and the disks eventually lie in a fixed compact subset of \(\Om\). Lemma~\ref{lem:lowdisk} therefore applies.

Set \(h_j=e^{-B_j^2/4}\). {By \eqref{eq:scalematch},
\[
 M_j^2\log\frac{h_j}{r_j}
 =-\frac{(M_jB_j)^2}{4}+M_j^2\log\frac1{r_j}
 \longrightarrow-\infty,
\]
so \(h_j/r_j\to0\).} Local weak convergence to the nonatomic measure \(\mu\) implies
\begin{equation}\label{eq:smallballuniform}
 \sup_{z\in K}\widehat\nu_j(B_{\eta_j}(z))\to0
 \quad\hbox{for compact }K\subset\R^2\hbox{ and }\eta_j\to0.
\end{equation}
{Indeed, if the supremum did not tend to zero, there would be \(c>0\) and centers \(z_j\in K\), converging along a subsequence to \(z\), such that \(\widehat\nu_j(B_{\eta_j}(z_j))\ge c\). For every fixed \(t>0\), these balls eventually lie in \(\cl B_t(z)\). Portmanteau's inequality gives
\[
 c\le\limsup_j\widehat\nu_j(\cl B_t(z))\le\mu(\cl B_t(z)).
\]
Letting \(t\downarrow0\) contradicts \(\mu(\{z\})=0\).}

Fix \(T\) with \(\mu(B_T)>3p/4\). The ball \(B_{h_j}(y_j)\) in the original coordinates corresponds to \(B_{h_j/r_j}((y_j-b_j)/r_j)\). If this ball intersects \(B_T\), its center eventually belongs to \(\cl B_{T+1}\), so \eqref{eq:smallballuniform} bounds the \(\widehat\nu_j\) mass of the intersection by \(o(1)\). Thus
\[
 \nu_j(B_{Tr_j}(b_j)\setminus B_{h_j}(y_j))\ge p/2
\]
for all sufficiently large \(j\). Since \(r_j\le\ell_j\) and \(y_j\in D_j\), every point of \(B_{Tr_j}(b_j)\setminus B_{h_j}(y_j)\) is at distance at most \((T+1)\ell_j\) from \(y_j\). Write \(I_{\mathrm{out},j}=\int_{\Om\setminus B_{h_j}(y_j)}G(y_j,z)\rho_j(z)\dd z\). Restricting the integral to this set gives
\[
 I_{\mathrm{out},j}\ge\frac{pM_j}{2}
 \left(\frac1{2\pi}\log\frac1{(T+1)\ell_j}-C\right)
 \ge\frac c{M_j}.
\]
By \eqref{eq:lowdiskmatch} and local boundedness of the Robin function,
\[
 \frac{cB_j}{M_j}\le B_jI_{\mathrm{out},j}
 \le C+\log(1/\theta_j).
\]
Multiplication by \(M_j^2\) contradicts \(M_jB_j\to\infty\) and \eqref{eq:strict}.
\end{proof}

\begin{proposition}\label{prop:representative}
The functions \(W_j,W\) of Proposition~\ref{prop:center} satisfy
\[
 W_j\to W\quad\hbox{in }C^2_{\mathrm{loc}}(\R^2).
\]
There exist local maximum points \(x_j\to a_0\) of \(u_j\), with \(\beta_j=u_j(x_j)\), such that
\begin{equation}\label{eq:representative}
 M_j\beta_j\to A,\qquad
 \beta_j(u_j(x_j+\mu_jz)-\beta_j)\to U(z)
 \quad\hbox{in }C^2_{\mathrm{loc}}(\R^2),\qquad
 \mu_j=(\theta_j\beta_j^2e^{\beta_j^2})^{-1/2}.
\end{equation}
Moreover,
\begin{equation}\label{eq:scaleconstants}
 \frac{x_j-b_j}{r_j}\to0,\qquad \frac{\mu_j}{r_j}\to\frac12,\qquad
 \lim_{T\to\infty}\lim_{j\to\infty}
 \nu_j(B_{T\mu_j}(x_j))=p.
\end{equation}
\end{proposition}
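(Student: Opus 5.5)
The plan is to first upgrade Proposition~\ref{prop:center} to a uniform upper bound \(W_j\le C_T\) on each fixed ball \(B_T\), which is equivalent to \(\sup_{B_{Tr_j}(b_j)}u_j\le C_T/M_j\). Elliptic estimates then give \(C^2_{\mathrm{loc}}\) convergence. Finally, all the assertions about the local maxima are read off from the explicit limit \(W\).

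\emph{Step 1 (boundary bound on a good circle).} Fix \(T\). By \eqref{eq:Wpotential},
\[
 W_j(z)\le C_T+\frac1{2\pi}\int\log^+\frac1{|z-y|}\dd\widehat\nu_j(y).
\]
For \(|z|=R\) we have \(|z-y|\ge\bigl||y|-R\bigr|\). Hence \(\sup_{\partial B_R}W_j\le C_T+\frac1{2\pi}\int\log^+(1/||y|-R|)\dd\widehat\nu_j(y)\). Averaging in \(R\in[T,2T]\) and using Tonelli together with the integrability of \(\log^+(1/|t|)\) in one dimension, the average of the right-hand side is at most \(C_T\), since \(\widehat\nu_j(\R^2)=1\). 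So for each \(j\) there is \(R_j\in[T,2T]\) with \(\sup_{\partial B_{R_j}}W_j\le C_T\). In original coordinates, set \(\ell_j=R_jr_j\) and \(D_j=B_{\ell_j}(b_j)\). Then
\[
 \sup_{\partial D_j}u_j\le c_j+C_TM_j\le C/M_j,
\]
because \(M_jc_j=A_j\to A\). By \eqref{eq:scalematch}, \(M_j^2\log(1/\ell_j)\to A^2/2\). Lemma~\ref{lem:diskheight} therefore gives \(\sup_{D_j}u_j\le C/M_j\), that is, \(W_j\le C_T\) on \(B_T\).

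\emph{Step 2 (\(C^2\) convergence).} With \(W_j\) now bounded above and below (by \eqref{eq:Wlower}) on \(B_T\), \(s_j=A_j+M_j^2W_j\) is bounded. The right-hand side \(f_j=k_js_je^{(A_j+s_j)W_j}\) in \eqref{eq:fj} is then uniformly bounded. Interior \(W^{2,q}\) estimates followed by Schauder bootstrapping (the nonlinearity is smooth in \(W_j\) with bounded coefficients \(k_j,A_j,M_j\)) give \(C^{2,\alpha}\) bounds. Since the \(L^q\) limit is \(W\), we get \(W_j\to W\) in \(C^2_{\mathrm{loc}}\).

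\emph{Step 3 (local maxima and scales).} \(W=A^{-1}\log(2/(1+|z|^2))\) has a strict nondegenerate maximum at \(0\). By \(C^2\) convergence, \(W_j\) has a local maximum \(z_j\to0\). Set \(x_j=b_j+r_jz_j\), so \(x_j\to a_0\) by \eqref{eq:halfnormalization} and \((x_j-b_j)/r_j\to0\). Then
\[
 M_j\beta_j=A_j+M_j^2W_j(z_j)\to A.
\]
Moreover \(\beta_j^2-c_j^2=M_jW_j(z_j)(\beta_j+c_j)\to W(0)\cdot2A=2\log2\). Using \(\theta_je^{c_j^2}=k_jM_j^2/r_j^2\), we obtain
\[
 \frac{\mu_j^2}{r_j^2}=\frac1{k_j(M_j\beta_j)^2e^{\beta_j^2-c_j^2}}\to\frac1{A^{-2}A^2\cdot4}=\frac14,
\]
so \(\mu_j/r_j\to1/2\). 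For the bubble,
\[
 \beta_j\bigl(u_j(x_j+\mu_jz)-\beta_j\bigr)=M_j\beta_j\bigl(W_j(z_j+(\mu_j/r_j)z)-W_j(z_j)\bigr)\to A\bigl(W(z/2)-W(0)\bigr)=U(z)
\]
in \(C^2_{\mathrm{loc}}\). Finally, \(\nu_j(B_{T\mu_j}(x_j))=\widehat\nu_j(B_{T\mu_j/r_j}(z_j))\to\mu(B_{T/2})\) by \eqref{eq:strongsource}, and this tends to \(p\) as \(T\to\infty\).

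The main obstacle is Step~1: the convergence so far is only \(L^q\) and measure-theoretic, while Lemma~\ref{lem:diskheight} needs a pointwise \(O(M_j^{-1})\) bound on a whole circle. The radial-averaging selection of \(R_j\) is what I would try first. I would then check that Lemma~\ref{lem:diskheight} tolerates a \(j\)-dependent radius \(\ell_j\in[Tr_j,2Tr_j]\), which it should, since only \(r_j\le\ell_j\) and the two-sided bound on \(M_j^2\log(1/\ell_j)\) are used.
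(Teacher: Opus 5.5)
There is a gap at the junction of Steps 1 and 2. The equivalence you assert in the overview and at the end of Step 1 is false. Since $u_j(b_j+r_jz)=c_j+M_jW_j(z)$ and $c_j=A_j/M_j$ with $A_j\to A>0$, the conclusion $\sup_{D_j}u_j\le C/M_j$ of Lemma~\ref{lem:diskheight} is equivalent to $s_j=A_j+M_j^2W_j\le C$ on $B_T$. That gives only $W_j\le (C-A_j)/M_j^2$, which blows up, and not $W_j\le C_T$.

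Consequently, your claim in Step 2 that $f_j$ is uniformly bounded is unjustified. From $s_j\le C$ and $A_j+s_j\ge0$ you only get $f_j\le Ce^{C(W_j)_+}$, and at this stage $W_j$ is known only in $L^q_{\mathrm{loc}}$. Nothing in your argument yet rules out concentration of $f_j$ inside $B_T$ at heights with $s_j\le C$.

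The missing ingredient is the exponential integrability \eqref{eq:alllinear}, proved inside Proposition~\ref{prop:center} from the fact that $\mu$ is nonatomic. It gives $\sup_j\|f_j\|_{L^q(B_T)}<\infty$ for every finite $q$. Interior $W^{2,q}$ estimates with $q>2$ then give local $C^{1,\alpha}$ bounds, and in particular the upper bound on $W_j$ you wanted. Only after that does your Schauder bootstrap apply. This is exactly how the paper closes this step.

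With that repair your proof works, and your Step 1 is a genuinely different, correct route to the good circle. The paper selects the circle through the Dirichlet-energy oscillation bound \eqref{eq:circle-oscillation} together with $M_j^2E_j\to0$ from Proposition~\ref{prop:strict}. It then controls the circular mean separately via \eqref{eq:Wpotential}.

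You instead average the one-sided potential bound over $R\in[T,2T]$, using $|z-y|\ge\bigl||y|-R\bigr|$ and $\int_\R\log^+(1/|t|)\dd t=2$. This is valid and yields the sharper bound $\sup_{\partial D_j}u_j\le c_j+C_TM_j$. It also avoids the energy estimate at this point, although Lemma~\ref{lem:diskheight} itself still uses $M_j^2\log(1/\theta_j)\to0$. Your $j$-dependent radius $\ell_j\in[Tr_j,2Tr_j]$ is admissible in that lemma, as you expected.

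Step 3 matches the paper. Deriving the last limit in \eqref{eq:scaleconstants} from \eqref{eq:strongsource}, rather than from the bubble convergence, is equally fine.
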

\begin{proof}
We first estimate the oscillation of \(u_j\) on circles. For \(B_{R_1}(b_j)\Subset\Om\) and \(0<R_0<R_1\), integration in polar coordinates gives
\[
 \int_{R_0}^{R_1}\int_0^{2\pi}
 |\partial_tu_j(b_j+Re^{it})|^2\dd t\,\frac{\dd R}{R}\le E_j.
\]
Hence there exists \(R\in[R_0,R_1]\) for which Cauchy--Schwarz on the circle yields
\begin{equation}\label{eq:circle-oscillation}
 \mathop{\rm osc}_{\partial B_R(b_j)}u_j
 \le\sqrt{2\pi}\left(\int_0^{2\pi}|\partial_tu_j(b_j+Re^{it})|^2\dd t\right)^{1/2}
 \le\left(\frac{2\pi E_j}{\log(R_1/R_0)}\right)^{1/2}.
\end{equation}

Fix \(T>0\), choose \(2+T<R_0<R_1\), and apply this estimate to the original radii in \([r_jR_0,r_jR_1]\). Since \(M_j^2E_j\to0\), we can select \(t_j\in[R_0,R_1]\) such that the oscillation on \(\partial B_{r_jt_j}(b_j)\) is \(o(M_j^{-1})\).
On the other hand, taking the circular mean in \eqref{eq:Wpotential} gives
\[
 \frac1{2\pi R}\int_{|z|=R}W_j\dd\sigma
 =\frac1{2\pi}\int
 \log\frac{\max\{1,|y|\}}{\max\{R,|y|\}}\dd\widehat\nu_j(y)+O_{R_1}(r_j).
\]
The absolute value of the logarithm is at most \(|\log R|\), so the mean of \(u_j\) on the selected circle is \(c_j+O(M_j)\). Set \(D_j=B_{r_jt_j}(b_j)\). Then
\begin{equation}\label{eq:circlelow}
 \sup_{\partial D_j}u_j\le\frac{A+o(1)}{M_j}.
\end{equation}
By \eqref{eq:scalematch}, \(M_j^2\log(1/(r_jt_j))\to A^2/2\). Lemma~\ref{lem:diskheight} therefore gives a uniform upper bound for \(s_j\) on \(B_T\).

Return to \eqref{eq:fj}. On a fixed slightly larger compact set, we have \(s_j\le C\), \(A_j\le C\), \(k_j\le C\), and \(W_j\ge-C\), hence
\[
 f_j\le C e^{C(W_j)_+}.
\]
By \eqref{eq:alllinear}, the functions \(f_j\) are uniformly bounded in \(L^q_{\mathrm{loc}}\) for every finite \(q\). Interior \(W^{2,q}\) estimates for \(-\Delta W_j=f_j\), together with the \(L^q\) bounds for \(W_j\), yield local \(C^{1,\alpha}\) bounds when \(q>2\). The right-hand side of \eqref{eq:fj} then has uniform \(C^{0,\alpha}\) bounds on smaller compact sets, and Schauder estimates give \(C^2\) compactness. The known \(L^q\) convergence identifies every subsequential limit with \(W\); hence \(W_j\to W\) locally in \(C^2\).

The limit \(W\) has a unique strict maximum at the origin. Maximize \(W_j\) on a fixed small closed ball about the origin. Uniform convergence ensures that each such maximum point \(z_j\) is eventually interior and that \(z_j\to0\). Set \(x_j=b_j+r_jz_j\) and \(\beta_j=u_j(x_j)\). Then \(x_j\) is a local maximum point, and
\[
 M_j\beta_j=A_j+M_j^2W_j(z_j)\to A.
\]
Since \(M_j\to0\) and \(A>0\), we have \(\beta_j\to\infty\).

Furthermore,
\[
 r_j^2\theta_j\beta_j^2e^{\beta_j^2}
 =k_j(M_j\beta_j)^2 e^{\beta_j^2-c_j^2}
 \longrightarrow A^{-2}A^2 e^{2AW(0)}=4.
\]
Here \(\beta_j^2-c_j^2=(M_j\beta_j+A_j)W_j(z_j)\to2AW(0)\), and \(AW(0)=\log2\). Thus \(\mu_j/r_j\to1/2\), and
\[
 \beta_j\bigl(u_j(x_j+\mu_jz)-\beta_j\bigr)
 =(M_j\beta_j)\bigl(W_j(z_j+(\mu_j/r_j)z)-W_j(z_j)\bigr)\to U(z)
\]
in \(C^2_{\mathrm{loc}}\). For each fixed \(T>0\), changing variables on the original ball \(B_{T\mu_j}(x_j)\) gives \(\beta_j\int_{B_{T\mu_j}(x_j)}\rho_j\to\int_{B_T}e^{2U}\). Dividing by \(M_j\beta_j\to A\) and then letting \(T\to\infty\) proves the last assertion of \eqref{eq:scaleconstants}.
\end{proof}

\subsection{Estimates on larger neighborhoods}\label{sec:neighborhood}
Proposition~\ref{prop:representative} gives convergence after rescaling about the selected local maximum points. To apply the local radial comparison from Section~\ref{sec:local}, we also need to control the weighted integral over balls with radii larger than \(\mu_j\). The following neighborhood estimates supply the remaining integral and parameter bounds, verifying all the hypotheses of Lemma~\ref{lem:budget}.

\begin{proposition}\label{prop:neighborhood}
\rev{Fix \(0<\sigma<A^2/2\), and set \(\varrho_j=e^{-\sigma/M_j^2}\). Then}
\begin{equation}\label{eq:neighborhood}
 \sup_{B_{4\rev{\varrho_j}}(b_j)}u_j\le C_\sigma/M_j,\qquad
 \nu_j(B_{2\rev{\varrho_j}}(b_j))\to p,\qquad
 \int_{B_{2\rev{\varrho_j}}(b_j)}u_j\rho_j\dd x\to4\pi.
\end{equation}
In particular, the local maximum points selected in Proposition~\ref{prop:representative} satisfy all the hypotheses of Lemma~\ref{lem:budget}.
\end{proposition}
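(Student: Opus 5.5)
The plan is to exploit the scale separation between the minimal radius \(r_j\) and \(\varrho_j\). By \eqref{eq:scalematch}, \(M_j^2\log(1/r_j)\to A^2/2>\sigma\), so \(r_j/\varrho_j\to0\). At the same time \(\varrho_j\to0\) and \(|b_j-a_0|\le2\delta_j^2\to0\). Hence the balls \(B_{4\varrho_j}(b_j)\) are much larger than the bubble scale, yet eventually lie in any fixed ball about \(a_0\). The three assertions in \eqref{eq:neighborhood} are proved in order; the uniform upper bound is the key step.

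\emph{Upper bound.} Apply \eqref{eq:circle-oscillation} with \(R_0=4\varrho_j\) and \(R_1=8\varrho_j\). Since \(M_j^2E_j\to0\) by Proposition~\ref{prop:strict}, this gives a radius \(\ell_j\in[4\varrho_j,8\varrho_j]\) on which \(\operatorname{osc}_{\partial B_{\ell_j}(b_j)}u_j=o(M_j^{-1})\). It remains to bound the circular mean \(m_j(\ell_j)\) of \(u_j\) about \(b_j\). Use the exact formula \eqref{eq:circleexact} with centre \(b_j\) and multiply by \(M_j\), so that all terms are expressed through \(\nu_j\). The first term is at most \(\frac{1}{2\pi}M_j^2\log(1/\ell_j)\le\sigma/(2\pi)+o(1)\). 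The regular-part term is \(O(M_j^2)\). The exterior term is \(M_j^2\int_{\Om\setminus B_{\ell_j}}G(b_j,y)\dd\nu_j(y)\), and \eqref{eq:greenbounds} bounds it by \(M_j^2(2\pi)^{-1}\log(L_0/\ell_j)\le C\sigma+o(1)\). Together with the small oscillation, this yields \(\sup_{\partial B_{\ell_j}(b_j)}u_j\le C_\sigma/M_j\). Since \(r_j\le\ell_j\to0\) and \(M_j^2\log(1/\ell_j)\to\sigma\in(0,\infty)\), Lemma~\ref{lem:diskheight} applies. It gives \(\sup_{B_{\ell_j}(b_j)}u_j\le C_\sigma/M_j\), which contains the first claim. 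I expect this to be the main point to check. The danger is that the exterior Green term could be large, but it only costs \(\log(1/\ell_j)\), which is exactly of order \(M_j^{-2}\).

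\emph{Mass.} For the lower bound, note that \(B_{Tr_j}(b_j)\subset B_{2\varrho_j}(b_j)\) for every fixed \(T\). By Proposition~\ref{prop:profile}, \(\liminf_j\nu_j(B_{2\varrho_j}(b_j))\ge\mu(B_T)\to p\) as \(T\to\infty\). For the upper bound, fix \(r>0\) with \(\nu(\partial B_r(a_0))=0\). Eventually \(B_{2\varrho_j}(b_j)\subset B_r(a_0)\), so the \(\limsup\) is at most \(\nu(\cl B_r(a_0))\), which decreases to \(p\) as \(r\downarrow0\).

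\emph{Weighted integral.} Since \(\rho_j\dd x=M_j\dd\nu_j\), we have \(u_j\rho_j\dd x=s_j\dd\nu_j\). Hence the integral equals \(\int_{B_{2\varrho_j}(b_j)}s_j\dd\nu_j\). On this ball \(s_j\le C_\sigma\) by the first step. The ball also lies eventually in each fixed \(B_r(a_0)\), so Proposition~\ref{prop:height} (with \(j\to\infty\) first and then \(r\downarrow0\)) shows that the \(\nu_j\) mass of \(\{|s_j-A|>\eps\}\) inside it tends to zero. Using the boundedness of \(s_j\) on that exceptional set, we get \(\int s_j\dd\nu_j=Ap+o(1)+O(\eps)=4\pi+O(\eps)\).

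\emph{Hypotheses of Lemma~\ref{lem:budget}.} Note that \(\sigma\) is arbitrary in \((0,A^2/2)\), while \(\varrho_j\) itself does not enter Lemma~\ref{lem:budget}. So for this application I fix some \(\sigma<A^2/32\). Since \(M_j\beta_j\to A\), we have \(\beta_j^2=(A^2+o(1))/M_j^2\). This gives \(\log\theta_j=o(\beta_j^2)\) from \eqref{eq:strict}. The \(C^2_{\mathrm{loc}}\) convergence comes from \eqref{eq:representative}, and \(\sup_jM_j,\sup_j\theta_j<\infty\) from Lemma~\ref{lem:mass}. For \(L_j=e^{-\beta_j^2/32}=e^{-(A^2/32+o(1))/M_j^2}\), the choice of \(\sigma\) gives \(L_j/\varrho_j\to0\), and \(|x_j-b_j|=o(r_j)\) by \eqref{eq:scaleconstants}. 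Therefore \(B_{L_j}(x_j)\subset B_{2\varrho_j}(b_j)\) eventually, so \(\limsup_j\int_{B_{L_j}(x_j)}u_j\rho_j\le4\pi\). Conversely, \(\mu_j\sim r_j/2\ll L_j\), so \(B_{T\mu_j}(x_j)\subset B_{L_j}(x_j)\). Changing variables as in Lemma~\ref{lem:maxbubble} shows that the integral over \(B_{T\mu_j}(x_j)\) tends to \(\int_{B_T}e^{2U}\uparrow4\pi\). This verifies \eqref{eq:budget}.
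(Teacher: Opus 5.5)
Your proof is correct. It follows the paper for the upper bound and for the verification of Lemma~\ref{lem:budget}, but takes a different route for the two limits in \eqref{eq:neighborhood}.

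For the upper bound, you do what the paper does: pick a circle of radius \(\ell_j\in[4\varrho_j,8\varrho_j]\) with oscillation \(o(M_j^{-1})\), bound the circular mean by \(O_\sigma(M_j^{-1})\), and apply Lemma~\ref{lem:diskheight}. Bounding the mean through the exact formula \eqref{eq:circleexact}, rather than directly through \(G\le(2\pi)^{-1}\log(C/|x-y|)\), is only cosmetic.

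The genuine differences are these.
\begin{itemize}
\item For the mass lower bound, you use the rescaled profile of Proposition~\ref{prop:profile} centred at \(b_j\), where \(\mu(B_T)\to p\). The paper instead uses the bubble at the selected maximum, where \(\nu_j(B_{T\mu_j}(x_j))\to m_T/A\).
\item For the weighted integral, you write it as \(\int s_j\dd\nu_j\) and invoke the value-distribution estimate of Proposition~\ref{prop:height}. You need the pointwise bound \(s_j\le C_\sigma\) only on the exceptional set \(\{|s_j-A|>\eps\}\). The paper instead splits off the bubble core \(B_{T\mu_j}(x_j)\), where the integral tends to \(m_T\) by \(C^2\) convergence. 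It then bounds the remainder by \(C_\sigma(p-m_T/A)\).
\end{itemize}

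Your route makes the three estimates in \eqref{eq:neighborhood} consequences of Section~\ref{sec:pressure}, Proposition~\ref{prop:profile} and the pointwise bound, without using the maxima \(x_j\). The paper's route shows directly that the weighted integral is carried by the bubble at \(x_j\), which is the form reused when verifying \eqref{eq:budget}.

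In the final verification, your restriction \(\sigma<A^2/32\) is exactly what is needed for \(L_j/\varrho_j\to0\). The paper's \(\sigma<A^2/64\) is simply more conservative. The inclusion \(B_{T\mu_j}(x_j)\subset B_{L_j}(x_j)\) that you assert follows from \(M_j^2\log(1/r_j)\to A^2/2\) together with \(M_j^2\log(1/L_j)\to A^2/32\).
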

{\begin{proof}
By \eqref{eq:scalematch}, \(r_j/\rev{\varrho_j}\to0\). Applying \eqref{eq:circle-oscillation} to radii in \([4\rev{\varrho_j},8\rev{\varrho_j}]\), we can choose \(\ell_j\in[4\rev{\varrho_j},8\rev{\varrho_j}]\) such that
\[
 \mathop{\rm osc}_{\partial B_{\ell_j}(b_j)}u_j
 \le\left(\frac{2\pi E_j}{\log2}\right)^{1/2}
 =o(M_j^{-1}),
\]
where the last equality follows from \(M_j^2E_j\to0\) in \eqref{eq:strict}.
To estimate the mean on this circle, set
\[
 \overline u_j(\ell_j)
 :=\frac1{2\pi\ell_j}\int_{\partial B_{\ell_j}(b_j)}u_j\dd\sigma
 =\frac1{2\pi}\int_0^{2\pi}u_j(b_j+\ell_je^{it})\dd t.
\]
Choose a constant \(C\ge\max\{L_0,1\}\), depending only on \(\Om\). The Green representation formula and Tonelli's theorem, applied to the nonnegative integrand \(G(b_j+\ell_je^{it},y)\rho_j(y)\), give
\[
 \begin{aligned}
 \overline u_j(\ell_j)
 &=\int_\Om\left[\frac1{2\pi}\int_0^{2\pi}
 G(b_j+\ell_je^{it},y)\dd t\right]\rho_j(y)\dd y\\
 &\le\frac1{2\pi}\int_\Om\left[\frac1{2\pi}\int_0^{2\pi}
 \log\frac{C}{|b_j+\ell_je^{it}-y|}\dd t\right]\rho_j(y)\dd y,
 \end{aligned}
\]
where we used \eqref{eq:greenbounds}. For each fixed \(y\), the circular mean identity for the logarithm yields
\[
 \frac1{2\pi}\int_0^{2\pi}\log|b_j+\ell_je^{it}-y|\dd t
 =\log\max\{\ell_j,|b_j-y|\}.
\]
It follows that
\[
 \begin{aligned}
 \overline u_j(\ell_j)
 &\le\frac1{2\pi}\int_\Om
 \log\frac{C}{\max\{\ell_j,|b_j-y|\}}\rho_j(y)\dd y\\
 &\le\frac{M_j}{2\pi}\log\frac{C}{\ell_j}
 \le\frac{M_j}{2\pi}\log\frac{C}{\varrho_j}\\
 &=\frac{\sigma}{2\pi M_j}+\frac{M_j\log C}{2\pi}
 =O_\sigma(M_j^{-1}).
 \end{aligned}
\]
 Combining this mean bound with the oscillation estimate gives
\[
 \sup_{\partial B_{\ell_j}(b_j)}u_j
 \le\overline u_j(\ell_j)+\mathop{\rm osc}_{\partial B_{\ell_j}(b_j)}u_j
 =O_\sigma(M_j^{-1}).
\]
Moreover, \(4\le\ell_j/\varrho_j\le8\), so
\[
 M_j^2\log\frac1{\ell_j}
 =\sigma-M_j^2\log\frac{\ell_j}{\varrho_j}\longrightarrow\sigma>0.
\]
Lemma~\ref{lem:diskheight} therefore gives \(\sup_{B_{4\rev{\varrho_j}}(b_j)}u_j\le C_\sigma/M_j\).

Write \(m_T=\int_{B_T}e^{2U}\uparrow4\pi\). For fixed \(T\), the relations \(r_j/\varrho_j\to0\), \(|x_j-b_j|=o(r_j)\), and \(\mu_j/r_j\to1/2\) give
\(B_{T\mu_j}(x_j)\subset B_{2\varrho_j}(b_j)\) for all sufficiently large \(j\). The convergence after rescaling about \(x_j\) and \(M_j\beta_j\to A\) yield
\[
 \liminf_{j\to\infty}\nu_j(B_{2\varrho_j}(b_j))
 \ge\lim_{j\to\infty}\nu_j(B_{T\mu_j}(x_j))
 =\frac{m_T}{A}.
\]
Letting \(T\to\infty\) gives the lower bound \(4\pi/A=p\). Conversely, for every fixed sufficiently small \(t>0\) with \(\nu(\partial B_t(a_0))=0\), the ball \(B_{2\varrho_j}(b_j)\) eventually lies in \(B_t(a_0)\). Hence
\[
 \limsup_{j\to\infty}\nu_j(B_{2\varrho_j}(b_j))\le\nu(B_t(a_0)).
\]
Letting \(t\downarrow0\) proves \(\nu_j(B_{2\varrho_j}(b_j))\to p\).

Fix \(T>0\). The same rescaling gives
    \begin{align}\label{eq:m_T}
    	&\int_{B_{T\mu_j}(x_j)}u_j\rho_j\dd x
    	=\int_{B_{T\mu_j}(x_j)}\theta_ju_j^2e^{u_j^2}\dd x
    	\longrightarrow m_T,\\
    	&\nu_j(B_{T\mu_j}(x_j))
    	=\frac1{M_j}\int_{B_{T\mu_j}(x_j)}\rho_j\dd x
    	\longrightarrow\frac{m_T}{A}.
    \end{align}
The pointwise bound established above gives
\[
 0\le s_j=M_ju_j\le C_\sigma
 \quad\hbox{on }B_{2\varrho_j}(b_j),
\]
Since \(u_j\rho_j\dd x=s_j\dd\nu_j\), we have
    \begin{equation}\label{eq:int_B_2rho_j-b_ju_jrho_j}
    	\int_{B_{2\varrho_j}(b_j)}u_j\rho_j\dd x=\int_{B_{T\mu_j}(x_j)}u_j\rho_j\dd x+\int_{B_{2\varrho_j}(b_j)\setminus B_{T\mu_j}(x_j)}s_j\dd\nu_j.
    \end{equation}
The remaining integral satisfies
\[
 \begin{aligned}
 0&\le\int_{B_{2\varrho_j}(b_j)\setminus B_{T\mu_j}(x_j)}s_j\dd\nu_j\\
 &\le C_\sigma\bigl[\nu_j(B_{2\varrho_j}(b_j))
 -\nu_j(B_{T\mu_j}(x_j))\bigr].
 \end{aligned}
\]
The mass convergence proved above and the {limit on the inner ball for fixed} \(T\) give
    \begin{equation}\label{eq:mass-converge-to-p-m_T/A}
    	\nu_j(B_{2\varrho_j}(b_j))-\nu_j(B_{T\mu_j}(x_j))
    	\longrightarrow p-\frac{m_T}{A}.
    \end{equation}
Combining \eqref{eq:m_T}, \eqref{eq:int_B_2rho_j-b_ju_jrho_j}, and \eqref{eq:mass-converge-to-p-m_T/A}, we obtain
\[
 \begin{aligned}
 m_T&\le\liminf_{j\to\infty}\int_{B_{2\varrho_j}(b_j)}u_j\rho_j\dd x\\
 &\le\limsup_{j\to\infty}\int_{B_{2\varrho_j}(b_j)}u_j\rho_j\dd x
 \le m_T+C_\sigma\left(p-\frac{m_T}{A}\right).
 \end{aligned}
\]
Finally, \(m_T\uparrow4\pi=Ap\), and \(C_\sigma\) does not depend on \(T\), so the error term \(C_\sigma(p-m_T/A)\) tends to zero as \(T\to\infty\). Letting \(T\to\infty\) in these bounds gives
\[
 \int_{B_{2\varrho_j}(b_j)}u_j\rho_j\dd x\longrightarrow4\pi,
\]
which is the last limit in \eqref{eq:neighborhood}.

To verify the hypotheses of Lemma~\ref{lem:budget}, fix \(0<\sigma<A^2/64\). Since \(M_j\beta_j\to A\),
\[
 \frac{e^{-\beta_j^2/32}}{\rev{\varrho_j}}
 =\exp\left\{-\frac{(M_j\beta_j)^2/32-\sigma}{M_j^2}\right\}\to0.
\]
Also, \(|x_j-b_j|=o(r_j)=o(\rev{\varrho_j})\), so \(B_{\exp(-\beta_j^2/32)}(x_j)\subset B_{2\rev{\varrho_j}}(b_j)\) for all sufficiently large \(j\).
For fixed \(T\), we have \(T\mu_j/e^{-\beta_j^2/32}\to0\). Thus
\[
 \int_{B_{T\mu_j}(x_j)}u_j\rho_j
 \le\int_{B_{\exp(-\beta_j^2/32)}(x_j)}u_j\rho_j
 \le\int_{B_{2\rev{\varrho_j}}(b_j)}u_j\rho_j.
\]
Letting first \(j\to\infty\) and then \(T\to\infty\), both bounds tend to \(4\pi\).
Moreover,
\[
 \frac{|\log\theta_j|}{\beta_j^2}
 =\frac{M_j^2\log(1/\theta_j)}{(M_j\beta_j)^2}\to0.
\]
By Proposition~\ref{prop:representative}, the selected local maxima lie in a fixed compact subset of \(\Om\), and the functions rescaled about them by \(\mu_j\) converge to \(U\). Together with the uniform bounds on the total mass and parameter, this verifies all hypotheses of Lemma~\ref{lem:budget}.
\end{proof}}

\section{Proof of the energy bound}\label{sec:absorb}\label{sec:mainproof}

The preceding section provides the required local maximum points and estimates in their neighborhoods. We now control the Green potential integral outside small balls about these points and deduce \(M_j=O(\sqrt{\theta_j})\). Combined with the maximum bound from Section~\ref{sec:local}, this excludes divergence of the energy.

We work along the subsequence associated with a fixed atom. Constants may depend on that atom and subsequence, but are independent of \(j\).

\begin{proposition}\label{prop:absorb}
For the local maximum points \(x_j\) selected in Proposition~\ref{prop:representative}, set
\[
 r_j^\circ=e^{-3\beta_j^2/16},\qquad
 I_{\mathrm{out},j}^\circ
 =\int_{\Om\setminus B_{r_j^\circ}(x_j)}
 G(x_j,y)\rho_j(y)\dd y.
\]
Then, for all sufficiently large \(j\),
\begin{equation}\label{eq:absorbconclusion}
 I_{\mathrm{out},j}^\circ\le C M_j,\qquad
 \theta_j\beta_j^2\ge c>0,\qquad
 M_j/\sqrt{\theta_j}\le C.
\end{equation}
\end{proposition}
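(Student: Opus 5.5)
Once the first estimate \(I_{\mathrm{out},j}^\circ\le CM_j\) is available, the other two follow immediately. By Proposition~\ref{prop:neighborhood}, Lemma~\ref{lem:budget} applies at \(x_j\). We then use Lemma~\ref{lem:matching} with \(\tau=3/16\in(1/16,1/2)\), so that \(r_j(\tau)=r_j^\circ\). Relation \eqref{eq:matching} gives
\[
 \log\frac{\theta_j\beta_j^2}{4}=4\pi R(x_j)-1-\beta_jI_{\mathrm{out},j}^\circ+o(1)\ge -C-C\beta_jM_j+o(1).
\]
Since \(M_j\beta_j\to A\) and \(R\) is bounded on a compact set containing the \(x_j\), this yields \(\theta_j\beta_j^2\ge c>0\). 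Consequently \(M_j^2/\theta_j=(M_j\beta_j)^2/(\theta_j\beta_j^2)\le C\). The whole proof therefore reduces to the exterior potential bound.

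For the exterior bound, we split \(\Om\setminus B_{r_j^\circ}(x_j)\) into three regions: the far region \(\Om\setminus B_{\delta}(x_j)\) with \(\delta\) fixed, the intermediate region \(B_\delta(x_j)\setminus B_{2\varrho_j}(b_j)\), and the near annulus \(B_{2\varrho_j}(b_j)\setminus B_{r_j^\circ}(x_j)\).

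On the far region, \(G(x_j,\cdot)\le C_\delta\), so its contribution is at most \(C_\delta M_j\).

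On the near annulus, radial comparison (Theorem~\ref{tool:DTlocal} with \(d_j=e^{-\beta_j^2/16}\)) together with \eqref{eq:flux} shows that the mass of \(\rho_j\) in \(B_{r}(x_j)\) for \(r\in[r_j^\circ,d_j]\) is \(4\pi/\beta_j+4\pi/\beta_j^3+o(\beta_j^{-3})\). Hence the mass between these radii is \(o(\beta_j^{-3})\), and \(G\le C\beta_j^2\) there, which gives \(o(\beta_j^{-1})=o(M_j)\). On \(B_{2\varrho_j}(b_j)\setminus B_{d_j}(x_j)\) we use \(s_j\le C_\sigma\) from \eqref{eq:neighborhood}. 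This region contains \(\nu_j\)-mass \(o(1)\), because \(\nu_j(B_{2\varrho_j}(b_j))\to p\) while \(\nu_j(B_{T\mu_j}(x_j))\to m_T/A\). Its points satisfy \(G\le\frac{\beta_j^2}{32\pi}+C=O(M_j^{-2})\), so the naive bound is only \(o(M_j^{-1})\), which is not good enough. The density must therefore be treated more carefully here.

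The intermediate region, and the refinement just described, are the main obstacle. We need \(\int G(x_j,y)\rho_j\,dy\) over a region where \(G\) can be as large as \(\log(1/\varrho_j)\sim\sigma/M_j^2\). The difficulty is that the available information is only that \(\nu_j\) mass there tends to zero, whereas we need it to be \(O(M_j^2)\) after weighting by \(G\). The plan is to use the local Pohozaev identity on \(B_r(x_j)\) with the vector field \(X=(x-x_j)\), in the form of Lemma~\ref{lem:stress} with a test vector field \(X(x)=(x-x_j)\,\psi(|x-x_j|)\), where \(\psi\) is chosen so that \(K_X\) produces the weight \(\log(1/|x-x_j|)\) (e.g.\ \(X=(x-x_j)\min\{1,\log(\delta/|x-x_j|)/\log(\delta/r)\}\)-type fields, as in the inversion field of Lemma~\ref{lem:weakclassify}). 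On the left, the double integral reproduces \(\frac{1}{4\pi}\nu_j(B_r)^2\) plus cross terms weighted by the logarithm. On the right we have \(\tfrac12\int\diver X\,d\Pi_j\), and \(\Pi_j\le\min\{u_j,u_j^{-1}\}\rho_j/M_j^2\) by \eqref{eq:pressure-comparison}. Using the refined expansion \eqref{eq:flux} (the \(4\pi/\beta_j^3\) term) and \(P_j\le\rho_j/u_j\), we expect exact cancellation at order \(M_j^2\), leaving
\[
 \int_{B_\delta\setminus B_{r_j^\circ}}\log\frac{1}{|y-x_j|}\,\rho_j\,dy\le C M_j,
\]
which is \(I_{\mathrm{out},j}^\circ\le CM_j\) after adding the far part. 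If this cancellation fails to close directly, we would instead iterate the matching \eqref{eq:lowdiskmatch} at radii in the annulus to show that the mean of \(u_j\) there decays linearly in \(\log(1/r)\) with slope \(4\pi/(2\pi\beta_j)\). That would make \(\rho_j\) exponentially small away from the bubble, and the logarithmic weight would then be absorbed.
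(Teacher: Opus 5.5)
Your deduction of the last two estimates from the first is correct and matches the paper's final step: apply \eqref{eq:matching} with \(\tau=3/16\), use that \(R\) is bounded and that \(M_j\beta_j\to A\). The gap is that the exterior bound \(I^\circ_{\mathrm{out},j}\le CM_j\), which is the whole content of the proposition, is not proved. You correctly see that naive estimates on \(B_{2\varrho_j}(b_j)\setminus B_{d_j}(x_j)\) and on the intermediate region lose a factor \(M_j^{-2}\). But your replacement argument is only ``we expect exact cancellation at order \(M_j^2\)'', and two ingredients that make it work are missing. The first is the inner cancellation. With \(X=x-x_j\) on \(B_r(x_j)\), identity \eqref{eq:stress} has \(Q^2/(4\pi)\) on the left and \(P_r\) on the right. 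These must agree far more precisely than the size of the quantity you want to bound. The refined flux expansion \eqref{eq:flux} cannot supply this: it gives \(Q\) only up to \(o(\beta_j^{-3})\) and says nothing about \(P_r\). The paper gets it from the local Pohozaev identity on \(\partial B_r(x_j)\) (the field \(\mathcal T\) with \(\diver\mathcal T=P\)), combined with the gradient comparison \(|\nabla(u_j-B_j)|\le C/(\beta_jd_j)\) from Theorem~\ref{tool:DTlocal}. This shows that the defect \(D=P_r-Q^2/(4\pi)\) satisfies \(|D|/r=o(M_j^2)\). The choice \(r=e^{-3\beta_j^2/16}\) is what makes both error terms in \eqref{eq:defectbound} exponentially small.

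The second missing ingredient is absorption. In the annulus, the term \(\tfrac12\int\diver X\,P\) on the right has the same order as the cross term you want to control on the left, so nothing cancels there; it must be absorbed. The paper uses \(X=\chi\,(x-x_j)/\max\{r,|x-x_j|\}\), the gradient of a convex function, so \(K_X\ge-\Lip\chi\). For \(x\in B_{\eps r}(x_j)\), the cross kernel is at least \((1-4\eps)/|y-x_j|\), while the divergence in the annulus is exactly \(1/|y-x_j|\). Since \(\beta_jQ_\eps\to4\pi\), the coefficient of \(J=\int_{B_\delta(a_0)\setminus B_r(x_j)}\rho_j/|y-x_j|\) on the left is about \(2/\beta_j\). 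On \(\{u_j\ge\beta_j/3\}\), the bound \(P\le3\rho/\beta_j\) gives at most \(\tfrac{3}{2\beta_j}J\) on the right. On \(\{u_j<\beta_j/3\}\), \eqref{eq:lowheight-Lq} with \(q=5/2\) and H\"older's inequality give \(O(M_j^2\theta_j)\). Hence \(J\le C\beta_jM_j^2\le CM_j\), and \(G(x_j,y)\le C_\delta/|x_j-y|\) finishes the proof with no near/intermediate/far splitting.

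Your logarithmic field \((x-x_j)\min\{1,\log(\delta/|x-x_j|)/\log(\delta/r)\}\) has divergence about twice its cross-kernel weight. With the same height splitting, the right coefficient becomes \(3/\beta_j\) against \(2/\beta_j\) on the left, and the absorption fails. It could be rescued only by splitting at \(\kappa\beta_j\) with \(1/2<\kappa<1\) (the upper limit is needed for \eqref{eq:lowheight}), together with the Pohozaev step above. None of this appears in your proposal.

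Your fallback does not close the gap either. \eqref{eq:lowdiskmatch} is a relation at a maximum point that again involves an exterior Green integral. Linear decay of the circular mean across the annulus with slope \(2/\beta_j\) is equivalent to the absence of extra mass there, which is exactly what has to be proved.
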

\begin{proof}
We suppress the subscript \(j\) when no confusion can arise and write
\[
 r=e^{-3\beta^2/16},\quad d=e^{-\beta^2/16},\quad
 Q=\int_{B_r(x_j)}\rho,\quad P_r=\int_{B_r(x_j)}P.
\]

\emph{The error estimate from the Pohozaev identity.}
For this calculation, translate \(x_j\) to the origin. The vector field
\[
 \mathcal T(x)=\frac{xP(x)}2+(x\cdot\nabla u)\nabla u-\frac x2|\nabla u|^2
\]
\rev{satisfies \(\diver\mathcal T=P\), as follows directly from \(-\Delta u=\rho\) and \(\nabla P=2\rho\nabla u\).}
On \(\partial B_r\),
\(\mathcal T\cdot n=\frac r2[P+(\partial_nu)^2-(\partial_\tau u)^2]\).
and therefore
\[
 P_r=\frac r2\int_{\partial B_r}
 [(\partial_nu)^2-(\partial_\tau u)^2+P]\dd\sigma.
\]
Integrating the equation gives the mean normal derivative
\(\frac1{2\pi r}\int_{\partial B_r}\partial_nu\,\mathrm d\sigma=-Q/(2\pi r)\). Decompose the normal derivative into its circular mean and a mean-zero term to obtain
\begin{equation}\label{eq:defect}
 D:=P_r-\frac{Q^2}{4\pi}
 =\frac r2\int_{\partial B_r}
 \left[\left(\partial_nu+\frac Q{2\pi r}\right)^2
 -(\partial_\tau u)^2+P\right]\dd\sigma.
\end{equation}
By Proposition~\ref{prop:neighborhood} and local comparison, the gradient of \(u-B_j\) is at most \(C/(\beta d)\) on \(B_d\). On each circle, the radial function \(B_j\) has constant normal derivative and zero tangential derivative.

Writing \(v=u-B_j\) and using the fact that the mean of \(\partial_nu\) on \(\partial B_r\) is \(-Q/(2\pi r)\), we have
\[
 \partial_nu+\frac{Q}{2\pi r}
 =\partial_nv-\frac1{2\pi r}\int_{\partial B_r}\partial_nv\dd\sigma,
 \qquad \partial_\tau u=\partial_\tau v.
\]
Thus the difference between \(\partial_nu\) and its circular mean has absolute value at most \(2\|\nabla v\|_\infty\), while \(|\partial_\tau u|\) is at most
\(\|\nabla v\|_\infty\). Hence
\[
 \left|\partial_nu+\frac Q{2\pi r}\right|\le\frac{2C}{\beta d},
 \qquad |\partial_\tau u|\le\frac C{\beta d}.
\]
Comparison of the function values and \eqref{eq:meanexpansion} with \(\tau=3/16\) give \(u=(3/8+o(1))\beta\) uniformly on \(\partial B_r\). Therefore
\begin{equation}\label{eq:defectbound}
 \frac{|D|}{r}\le\frac{Cr}{\beta^2d^2}
 +Cr\theta\exp\bigl(\max_{\partial B_r}u^2\bigr)
 =o(M^2).
\end{equation}
The first term is \(Ce^{-\beta^2/16}/\beta^2\). The exponent in the second is
\((-3/16+9/64+o(1))\beta^2=(-3/64+o(1))\beta^2\); 
and \(\theta\) is bounded. Both terms are \(o(\beta^{-2})\), while \(M\beta\to A>0\).
The choice \(3/16\in(2/16,1/4)\) ensures exponential decay of both error terms.

\emph{A test vector field from the gradient of a convex function.}
Fix \(\delta>0\) sufficiently small, and choose \(\chi\in C_c^\infty(B_{2\delta}(a_0))\) with \(0\le\chi\le1\) and \(\chi=1\) on \(B_\delta(a_0)\). Its support lies in \(\Om\). For all sufficiently large \(j\), \(|x_j-a_0|<\delta/4\) and \(r<\delta/4\).
Set
\[
 Y(x)=\frac{x-x_j}{\max\{r,|x-x_j|\}},\qquad X=\chi Y.
\]
The vector field \(Y\) is the gradient of the convex function
\[
 \Phi_r(x)=
 \begin{cases}
 |x-x_j|^2/(2r),&|x-x_j|\le r,\\
 |x-x_j|-r/2,&|x-x_j|\ge r.
 \end{cases}
\]
so \(K_Y\ge0\). The identity
\[
 K_{\chi Y}(x,y)=\frac{\chi(x)+\chi(y)}2K_Y(x,y)
 +\frac{\chi(x)-\chi(y)}2
 \frac{(Y(x)+Y(y))\cdot(x-y)}{|x-y|^2}
\]
and \(|Y|\le1\) give \(K_X\ge-\Lip\chi\).
On \(B_r(x_j)^2\), the kernel equals \(1/r\).

For \(0<\eps<1/4\), set
\[
 Q_\eps=\int_{B_{\eps r}(x_j)}\rho\dd x,\qquad
 J=\int_{B_\delta(a_0)\setminus B_r(x_j)}
 \frac{\rho(y)}{|y-x_j|}\dd y.
\]
Translate \(x_j\) to the origin and take \(|x|\le\eps r\), \(t=|y|\ge r\). A direct calculation gives
\[
 (Y(x)-Y(y))\cdot(x-y)
 =|x|^2/r+t-x\cdot y(1/r+1/t)\ge t(1-2\eps),
 \qquad |x-y|^2\le t^2(1+\eps)^2.
\]
Thus \(K_Y(x,y)\ge(1-4\eps)/t\).
Retain the pairs in \(B_r(x_j)\times B_r(x_j)\), together with the pairs between
\(B_{\eps r}(x_j)\) and \(B_\delta(a_0)\setminus B_r(x_j)\) in both orders.
These three product regions are pairwise disjoint. Bounding the kernel on the remainder below by \(-\Lip\chi\), we obtain
\begin{equation}\label{eq:interactionlower}
 \frac1{4\pi}\iint K_X\rho(x)\rho(y)\dd x\dd y
 \ge\frac{Q^2}{4\pi r}+\frac{(1-4\eps)Q_\eps}{2\pi}J-C_\delta M^2.
\end{equation}

For each fixed \(j\), Lemma~\ref{lem:stress} applies to this compactly supported Lipschitz vector field. The regular Green term has absolute value at most \(C_\delta M^2\). Almost everywhere, the divergence satisfies
\[
 \diver X=
 \begin{cases}
 2/r,&x\in B_r(x_j),\\
 |x-x_j|^{-1},&x\in B_\delta(a_0)\setminus B_r(x_j),
 \end{cases}
\]
and its absolute value is at most \(C_\delta\) on the cutoff annulus
\(B_{2\delta}(a_0)\setminus B_\delta(a_0)\).
By \eqref{eq:pressuretotal}, \(\int P=O(M^2)\). Thus, after multiplying \eqref{eq:stress} by \(M^2\), its right-hand side becomes
\[
 \frac{P_r}{r}
 +\frac12\int_{B_\delta(a_0)\setminus B_r(x_j)}
 \frac{P(y)}{|y-x_j|}\dd y+O_\delta(M^2).
\]
Combining this with \eqref{eq:interactionlower} and writing the difference of the two terms associated with \(B_r(x_j)\) as \(D/r\), we obtain
\begin{equation}\label{eq:absorbmaster}
 \frac{(1-4\eps)Q_\eps}{2\pi}J
 \le\frac Dr+
 \frac12\int_{B_\delta(a_0)\setminus B_r(x_j)}
 \frac{P(y)}{|y-x_j|}\dd y+C_\delta M^2.
\end{equation}
For fixed \(\eps>0\), we have \(\eps r/\mu_j\to\infty\). Convergence of the densities after rescaling by \(\mu_j\) gives
\(\liminf_j\beta_jQ_\eps\ge4\pi\). Since \(Q_\eps\le Q\) and \eqref{eq:flux} gives \(\beta_jQ\to4\pi\), the upper limit is at most \(4\pi\). Hence
\begin{equation}\label{eq:Qeps}
 \beta_jQ_\eps\to4\pi.
\end{equation}

\emph{Splitting the weighted integral by function values.}
On \(\{u\ge\beta/3\}\), \eqref{eq:pressure-comparison} gives \(P\le3\rho/\beta\), so the contribution to the integral involving \(P\) in \eqref{eq:absorbmaster} is at most \(3J/(2\beta)\).

For the integral over \(\{u_j<\beta_j/3\}\), take
\[
 T=3A/8,\qquad q=5/2,\qquad q'=5/3.
\]
Since \(M_j\beta_j\to A\), the inequality \(u_j<\beta_j/3\) eventually implies \(s_j\le T\). Decrease the fixed \(\delta\) and choose
\(\nu(\cl B_{2\delta}(a_0))<\bar p<16p/15\). The identity \(Ap=4\pi\) then gives \(qT\bar p<4\pi\). Applying \eqref{eq:localjensen} and the truncation estimate \eqref{eq:lowheight-Lq}, we obtain
\begin{equation}\label{eq:lowLq}
 \left\|\ind_{\{u_j<\beta_j/3\}}\frac{P_j}{M_j^2}
 \right\|_{L^q(B_\delta(a_0))}\le C_\delta\theta_j.
\end{equation}
Since \(q'<2\) and \(|x_j-a_0|<\delta/4\),
\[
 \int_{B_\delta(a_0)}|y-x_j|^{-q'}\dd y
 \le\frac{2\pi}{2-q'}(5\delta/4)^{2-q'}.
\]
H\"older's inequality therefore bounds the integral of \(P_j\), weighted by \(|y-x_j|^{-1}\), over \(\{u_j<\beta_j/3\}\) by \(C_\delta M_j^2\theta_j\).

Fix \(\eps=1/100\). By \eqref{eq:Qeps}, the coefficient on the left of \eqref{eq:absorbmaster} is eventually at least \(9/(5\beta_j)\). The coefficient of \(J\) contributed by \(\{u_j\ge\beta_j/3\}\) is at most \(3/(2\beta_j)\). Moving that term to the left leaves a coefficient of at least \(3/(10\beta_j)\). Using \eqref{eq:defectbound} and boundedness of \(\theta_j\), we obtain
\[
 \frac3{10\beta_j}J
 \le |D|/r+C_\delta M_j^2+C_\delta M_j^2\theta_j
 \le C_\delta M_j^2.
\]
Thus \(J\le C_\delta\beta_jM_j^2\le C_\delta M_j\).

\emph{The outer integral and the parameter estimate.}
Outside \(B_\delta(a_0)\), the integration variable is at distance at least \(3\delta/4\) from \(x_j\), so this contribution to \(I_{\mathrm{out},j}^\circ\) is at most \(C_\delta M_j\). On \(B_\delta(a_0)\setminus B_r(x_j)\), the logarithmic upper bound for the Green function gives
\(G(x_j,y)\le C_\delta/|x_j-y|\), so the corresponding integral is at most \(C_\delta J\).
Hence \(I_{\mathrm{out},j}^\circ\le C_\delta M_j\). Take \(\tau=3/16\) in \eqref{eq:matching}. Since \(M_j\beta_j\to A\) and \(R(x_j)\) is bounded,
\[
 \log(\theta_j\beta_j^2/4)\ge-C,
\]
which proves \(\theta_j\beta_j^2\ge c>0\). Finally,
\[
 \frac{M_j^2}{\theta_j}
 =\frac{(M_j\beta_j)^2}{\theta_j\beta_j^2}\le C.
\]
This completes the proof.
\end{proof}

\begin{proof}[Proof of Theorem~\ref{thm:main}]
Suppose that the theorem fails, and choose positive solutions with \(E_j\ge j\). Lemma~\ref{lem:mass} and \eqref{eq:energyidentity} give \(M_j\le C_\Om\) and \(\gamma_j\to\infty\). Proposition~\ref{prop:maxbound} yields \(\theta_j\to0\) and
\(\sqrt{\theta_j}\gamma_j\le C_\Om\).

As in Section~\ref{sec:pressure}, pass to a subsequence along which \(\nu_j,\Pi_j\) both converge weakly. Proposition~\ref{prop:atom} shows that \(\nu\) has an atom of positive mass. Proposition~\ref{prop:representative} then provides local maximum points \(x_j\) associated with this atom. By Proposition~\ref{prop:absorb}, a further subsequence satisfies \(M_j/\sqrt{\theta_j}\le C\). Therefore
\[
 E_j\le\gamma_jM_j
 =(\sqrt{\theta_j}\gamma_j)(M_j/\sqrt{\theta_j})\le C.
\]
The constant may depend on the fixed atom and subsequence, but is independent of \(j\). This contradicts \(E_j\ge j\).

There is therefore a finite bound \(B_\Om\), depending only on the fixed domain \(\Om\), for which \eqref{eq:main} holds.
\end{proof}

\section*{Declarations}

\noindent\textbf{Funding.} W. Zou is supported by the National Key R\&D Program of China (Grant No.~2023YFA1010001) and the National Natural Science Foundation of China (Grant No.~12171265).

\noindent\textbf{Conflict of interest.} The authors declare no conflict of interest related to this work.

\noindent\textbf{Data availability.} This is a study in theoretical mathematics. No datasets were generated or analyzed, and data sharing is not applicable.

\noindent\textbf{Use of generative artificial intelligence.} \rev{During research discussions and preparation of the manuscript, generative artificial intelligence was used to assist with literature searches and reference checks, discussion and elaboration of mathematical arguments, language revision, and \LaTeX{} typesetting. The authors assume full responsibility for the mathematical validity, accuracy, and integrity of the proofs presented in the manuscript.}

\end{document}